\theoremstyle{plain} 
\newtheorem{thm}{Theorem}[section]
\newtheorem{lem}[thm]{Lemma}
\newtheorem{prop}[thm]{Proposition}
\newtheorem{cor}[thm]{Corollary}
\theoremstyle{plain}
\newtheorem{defn}[thm]{Definition}
\newtheorem{exam}[thm]{Example}
\newtheorem{rem}[thm]{Remark}
\numberwithin{equation}{section}
\DeclareMathOperator*{\essinf}{ess\,inf}
\DeclareMathOperator*{\esssup}{ess\,sup}
\title{
The Mimura Integral: A Unified Framework for Riemann and Lebesgue Integration
}
\author{Yoshifumi MIMURA\thanks{mmryshfm@gmail.com}
\\ 
Independent Researcher, Tokyo, Japan\\
}
\date{}
\begin{document}

\maketitle

\begin{abstract}
An integral on Euclidean space, equivalent to the Lebesgue integral, 
is constructed by extending the notion of Riemann sums. 
In contrast to the Henstock--Kurzweil and McShane integrals, 
the construction recovers the full measure-theoretic structure 
--- outer measure, inner measure, and measurable sets --- 
rather than merely reproducing integration with respect to the Lebesgue measure. 
Whereas the classical approach to Lebesgue theory proceeds through 
a two-layer framework of measure and integration, 
these layers are unified here into a single framework, 
thereby avoiding duplication. 
Compared with the Daniell integral, the method is more concrete and accessible, 
serving both as an alternative to the Riemann integral 
and as a natural bridge to abstract Lebesgue theory.
\end{abstract}

\noindent
{\sc keywords}:{\ Lebesgue integral; Riemann integral; Measure theory; Integration theory; Real analysis; Mathematical education}

\noindent
Mathematics Subject Classification%(2020)
: 
28A25, %(Integration with respect to measures), 
26A42, %(Integrals of Riemann, Stieltjes and Lebesgue type),
28A05, %(Classes of sets), 
28C15, %(Integration on abstract spaces), 
97I50. %(Integration, educational aspects)

% 番号なし（Introduction 専用）
\newtheorem*{thm*}{Theorem}
\newtheorem*{lem*}{Lemma}
\newtheorem*{prop*}{Proposition}
\newtheorem*{cor*}{Corollary}
\theoremstyle{plain}
\newtheorem*{defn*}{Definition}
\newtheorem*{exam*}{Example}
\newtheorem*{rem*}{Remark}

\section*{Introduction}

The Lebesgue integral is indispensable in modern analysis. Nevertheless, the standard construction that begins with measure theory can seem circuitous to students eager to learn integration. For this reason, even after the establishment of Lebesgue integration, refinements of Riemann's definition --- such as the Henstock--Kurzweil and McShane integrals --- have been proposed. On the other hand, there are also approaches like the Daniell integral that do not presuppose a measure and instead take the integral itself (a positive linear functional) as the point of departure. In this paper we construct an integral by a method different from all of these and show that it is equivalent to the Lebesgue integral. In particular, we recover the measure by integrating characteristic functions of sets, thereby introducing measure-theoretic notions -- outer measure, inner measure, measurable sets -- in a natural way. In other words, we furnish a unified framework for measure and integration.

We first recall the key ideas and distinctions among the Riemann, Henstock--Kurzweil, and McShane integrals. 
For a function 
$f:[a,b] \to \mathbb{R}$, 
consider a family of closed intervals $\{[a_m,b_m]\}_{m=1}^N$ 
and a family of points $\{x_m\}_{m=1}^N$, 
and the Riemann sum 
\[
S:=
\sum\limits_{m=1}^N f(x_m)(b_{m}-a_{m}). 
\]
Assume 
\noindent
\begin{itemize}

\item[\rm{(i)}] $[a,b]=\bigcup\limits_{m=1}^N [a_m, b_m]$, $N \in \mathbb{N}$. 

\item[\rm{(ii)}] $(a_m, b_m) \cap (a_n, b_n)=\varnothing$ if $m \neq n$. 

\item[\rm{(iii)}] $x_m \in [a,b]$. 

\end{itemize}
Under these standing assumptions, the following definitions are standard.

\noindent
\begin{itemize}

\item[\rm{(I)}] Riemann integral \cite[Definition 4.11]{s-k}\,: \\[1ex]
There exists $A$ such that for every $\varepsilon>0$ 
there is $\delta>0$ with  
\[ x_m \in [a_m, b_m] \subset (x_m-\delta, x_m+\delta)\] 
for all pairs 
$\{(x_m, [a_m, b_m])\}$,  implying 
\[
|S-A|<\varepsilon. 
\]
We then call
$A$ the Riemann integral of $f$ over $[a,b]$.

\item[\rm{(II)}] 
Henstock--Kurzweil integral \cite[Definition 4.12]{s-k}\,: \\[1ex]
There exists $B$ such that for every $\varepsilon>0$ 
there is a function (called gauge) 
$\delta:[a,b] \to (0, \infty)$ with
\[ x_m \in [a_m, b_m],\quad [a_m, b_m] \subset (x_m-\delta(x_m), x_m+\delta(x_m)) \]
for all $\{(x_m, [a_m, b_m])\}$,  implying 
\[
|S-B|<\varepsilon. 
\]
We then call $B$ the Henstock--Kurzweil integral of $f$ over $[a,b]$

\item[\rm{(III)}] McShane integral \cite[Definition 5.4]{s-k}\,: \\[1ex]
There exists $C$ such that for every $\varepsilon>0$
there is a function (called gauge) 
$\delta:[a,b] \to (0, \infty)$ with  
\[ [a_m, b_m] \subset (x_m-\delta(x_m), x_m+\delta(x_m)), 
\quad (\text{we do not require $x_m \in (a_m,b_m)$})  \]
for all $\{(x_m, [a_m, b_m])\}$,  implying  
\[
|S-C|<\varepsilon. 
\]
We then call $C$ the McShane integral of $f$ over $[a,b]$ 

\end{itemize}

\begin{rem*} 
Under {\rm (II)}, the admissible families $\{(x_m, [a_m, b_m])\}$ 
form a subset of those allowed in {\rm(III)}. 
Hence McShane integrability implies Henstock--Kurzweil integrability 
{\rm (}the converse fails{\rm)}.
\end{rem*}

Thus the conditions imposed on the points (called tags) 
$\{x_m\}$ and on the partition 
determine the class of integrable functions. 
Riemann integrability is the narrowest; McShane integrability strictly extends Riemann integrability and coincides with Lebesgue integrability; Henstock--Kurzweil integrability strictly extends Lebesgue integrability (it does not require absolute integrability). For example,
\[
f(x):=
\begin{cases}
\displaystyle \frac{1}{x}\sin{\frac{1}{x^2}}, & x \in (0,1], \\[1.5ex]
0, & x=0,
\end{cases}
\]
is Henstock--Kurzweil integrable 
but neither absolutely integrable nor Lebesgue integrable. In symbols,
\[ \text{Riemann}\subset \text{Lebesgue} 
=\text{McShane} \subset \text{Henstock--Kurzweil}\]

However, each of these three integrals (R, HK, MS) lacks an explicit measure-theoretic viewpoint. The Henstock--Kurzweil integral is not well suited 
to constructing normed spaces such as Lebesgue spaces $L^p(\mathbb{R}^d)$, 
and McShane's theory faces limitations without Lebesgue measure theory. 
Motivated by the connection to measure, we first define Lebesgue null sets independently, prior to outer measure, and then 
--- 
for a nonnegative function $f\geq 0$ 
---
we introduce the following 
\lq \lq extension of partitions\rq \rq and 
\lq \lq constraint on tags\rq \rq:

\begin{itemize}

\item[\rm{(M1)}] Measure-theoretic decomposition\,:\\[1ex]
Decompose an open interval $(a,b)$ 
into a countable disjoint union 
$(a,b)=\bigsqcup\limits_{m=1}^{\infty}(a_m, b_m)$ 
in the sense of \lq \lq almost everywhere\rq \rq\,; 
namely, 
the symmetric difference 
of $(a,b)$ and $\bigsqcup\limits_{m=1}^{\infty}(a_m, b_m)$ 
is 
a Lebesgue null set.

\item[\rm{(M2)}] Essential-bounds tagging\,: \\[1ex]
In the extended Riemann sum 
$\sum\limits_{m=1}^{\infty} f(x_m)(b_m-a_m)$, choose tags 
$x_m \in (a_m, b_m)$ satisfying 
\begin{equation*}
\alpha_m \leq f(x_m) \leq \beta_m, 
\end{equation*}
where 
$\alpha_m:=\essinf\limits_{x \in (a_m, b_m)}f(x)$ and  
$\beta_m:=\esssup\limits_{x \in (a_m, b_m)}f(x)$, see \S 1 for definition.
\end{itemize}
In line with the spirit of these conventions, we do \emph{not} require the containment 
\[ (a_m,b_m)\subset(x_m-\delta,x_m+\delta)\] for every admissible tagged decomposition. 
Instead, for any measure-theoretic decomposition $P=\{(a_m,b_m)\}$ as in {\rm(M1)}, set
\[
L(f;P):=\sum_{m=1}^\infty \alpha_m(b_m-a_m), 
\qquad 
U(f;P):=\sum_{m=1}^\infty \beta_m(b_m-a_m),
\]
and define the lower and upper integrals by
\[
\underline{\int_a^b} f:=\sup_{P} L(f;P), 
\qquad 
\overline{\int_a^b} f:=\inf_{P} U(f;P).
\]
We say that $f$ is \emph{Mimura integrable} on $(a,b)$ if 
the upper integral coincides with the lower integral, 
in which case their common value is called 
the \emph{Mimura integral} of $f$ over $(a,b)$.
\noindent
(As usual, signed functions are handled via the decomposition $f=f^{+}-f^{-}$ whenever the two parts are Mimura integrable.)

The Mimura integral 
is formulated on Euclidean spaces 
$\mathbb{R}^d\ (d \geq 1)$ 
and 
shown to be equivalent to the Lebesgue integral. 
In particular, 
by defining the integral of a characteristic function 
$\chi_E$ of a set $E \subset \mathbb{R}^d$ to be $\mu(E)$, 
one recovers the measure. More precisely, 
when $f$ is a characteristic function, 
the upper (resp. lower) integral corresponds to 
the outer (resp. inner) measure, 
and the sets $E$ 
for which outer and inner measures coincide 
are precisely those satisfying Carath\'eodory's condition 
\[
\overline{\mu}(A)= \overline{\mu}(A\cap E) + \overline{\mu}(A \cap E^c), \quad 
\forall A \subset \mathbb{R}^d, 
\]
where $\overline{\mu}(A)$, the upper integral of $\chi_A$ 
coincides with the Lebesgue outer measure of $A$. 
Thus, having first given a succinct, 
Riemann-style definition of the integral, 
one can subsequently develop measure theory. 
This in turn allows the treatment of 
the Monotone Convergence Theorem, Fatou's Lemma, 
and the Dominated Convergence Theorem.

While the Daniell integral (see \cite{d}, \cite{stone}, \cite{b}) likewise yields a theory equivalent to Lebesgue integration with integration preceding measure, it is often perceived as abstract by beginners and is best approached after one is already familiar with the Lebesgue integral via standard measure theory. By contrast, the Mimura integral rests on a concrete construction that retains a Riemannian flavor and is thus well suited to novices.

As for the roles of the Riemann and Lebesgue integrals, in applications the Riemann integral can arguably be replaced by the Lebesgue integral. Yet the path from measure theory to integration can be both long and opaque, so pedagogically the Riemann integral remains entrenched. Typically one develops substantial results in calculus using Riemann integration and later switches to Lebesgue integration, often 
without re-proving overlapping theorems except where extensions are needed. 
In this respect, the Mimura integral allows one to carry over Riemann's methods unchanged, grounded in the understanding that Lebesgue null sets do not affect the integral. The Mimura integral not only bridges Riemann and Lebesgue but may also serve as a viable substitute for the Riemann integral.

Finally, compared directly with the Lebesgue integral, 
the Mimura integral is equivalent to it; 
at the level of construction, the choice between them is largely a matter of taste. 
The present development is specialized to integration on Euclidean spaces; 
once abstract settings are included, it covers only a small part of Lebesgue theory.
Even so, Lebesgue theory has a genuinely two-layer structure 
--- measure and integration --- 
that standard expositions tend to blur. 
On the measure side, the outer measure alone drives the story: 
one can develop the measure-theoretic layer purely from outer measure, 
without appealing to inner measure. On the integration side, 
however, the integral is defined via simple functions 
and monotone extension from below
--- an intrinsically inner-measure-type procedure. 
Because this is presented as a definition of the integral 
rather than as a measure construction, 
the \lq \lq inner-measure\rq \rq content is easy to miss. 
In the Mimura framework, 
upper and lower integrals make this latent split explicit: the upper integral corresponds to the outer-measure layer, the lower integral to the inner-measure layer, and the two operate symmetrically and transparently. Besides sharpening one's understanding of Lebesgue integration, this symmetry, we expect, substantially reduces the educational cost.

\clearpage
\tableofcontents          
\clearpage

\section{Integrals for Nonnegative Functions}

As in the Lebesgue theory, we begin by defining the integral for nonnegative functions. 
Generalizing the finite partitions of $d$-dimensional intervals used in the Riemann integral, 
we start by introducing a countable partition of an open set $\mathcal{O}$. 
The partition need not be exact; 
it is more efficient to require only an almost-everywhere partition 
(i.e., equality except on a Lebesgue null set). 
We first specify the $d$-dimensional intervals.

\begin{defn}[Open $d$-dimensional intervals, their measure, and size]

\noindent
\begin{itemize}

\item[\rm{(i)}] Open $d$-dimensional interval\,{\rm :} 
$
I=\prod\limits_{n=1}^d(a_n, b_n)
\subset \mathbb{R}^d$. 

\item[\rm{(ii)}] Measure of an open $d$-dimensional interval\,{\rm :} 
$
|I|:=\prod\limits_{n=1}^{d}(b_n-a_n) \geq 0
$. 

\item[\rm{(iii)}] Size of an open $d$-dimensional interval\,{\rm :} 
$
\mathrm{size}(I):= \sum\limits_{n=1}^d(b_n-a_n) \geq 0
$. 

\item[\rm{(iv)}] 
Let $\mathscr{I}(\mathbb{R}^d)$ denote the collection of all open 
$d$-dimensional intervals. 
\\[1ex]
We include the empty set
$\varnothing \in \mathscr{I}(\mathbb{R}^d)$ 
and stipulate 
$|\varnothing|=0$ and $\mathrm{size}(\varnothing)=0$.

\end{itemize}

\end{defn}

Lebesgue null sets can be defined without appealing to outer measure. 
\begin{defn}[Lebesgue null set]
A set $E \subset \mathbb{R}^d$ is a Lebesgue null set if, 
for every $\varepsilon>0$ there exists a sequence of open 
$d$-dimensional intervals  
$\{I_m\}_{m=1}^{\infty} \subset \mathscr{I}(\mathbb{R}^d)$ such that 
\[ E \subset \bigcup\limits_{m=1}^{\infty}I_m \quad \text{and } \quad 
\sum\limits_{m=1}^{\infty}|I_m|<\varepsilon. \]
We denote by $\mathcal{N}(\mathbb{R}^d)$ 
the collection of all Lebesgue null sets in $\mathbb{R}^d$. 
\end{defn}

\begin{exam}
The set of rational numbers $\mathbb{Q}\subset \mathbb{R}$
is a Lebesgue null set.
\end{exam}

For working with Lebesgue null sets, the following notation will be convenient.
\begin{defn}[Almost everywhere] 
A property $P(x)$ is said to hold 
almost everywhere 
on a set $E$ 
if the set of points $x \in E$ where 
$P(x)$ fails is a Lebesgue null set. We write 
\[
P(x)\ \ \text{a.e. }\quad \text{or}\quad P(x)\ \ \text{a.e. on }E. 
\]
\end{defn}

The following result appears in many texts on measure and integration; 
see, for example, \cite[Lemma 6.9]{a-dp-m}, 
\cite[Lemma 1.4.2]{c}, \cite[Lemma 2.43]{Folland}, 
\cite[Lemma 3.44]{s-k}, \cite[Theorem 1.4]{s-s}. 
In this paper we state it in the form below, 
taking into account \lq \lq a.e.\rq \rq statements and the size of intervals.

\begin{lem}[Open set decomposition] 
For any open set $\mathcal{O} \subset \mathbb{R}^d$, 
there exists a sequence 
$\{I_m\}_{m=1}^{\infty} \subset \mathscr{I}(\mathbb{R}^d)$
such that 
\[
\mathcal{O} = \bigsqcup_{m=1}^{\infty}I_m\ \ \text{a.e.}
\qquad 
\left 
(
\mathcal{O}=\bigcup_{m=1}^{\infty}I_m\ \ \text{a.e.},\  I_m \cap I_n =\varnothing \text{ for }m\neq n
\right )
\] 
In particular, 
$\sup\limits_{m \in \mathbb{N}}\mathrm{size}(I_m)$ 
can be made arbitrarily small. 
Conversely, for any sequence 
$\{I_m\}_{m=1}^{\infty} \subset \mathscr{I}(\mathbb{R}^d)$, 
the disjoint union 
\[
\mathcal{O}' = \bigsqcup_{m=1}^{\infty}I_m
\]
is an open set.
\end{lem}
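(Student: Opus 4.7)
The plan is a standard dyadic-cube selection, calibrated so the size bound drops out automatically. Fix $\varepsilon>0$ and choose $K\in\mathbb{N}$ with $d\cdot 2^{-K}<\varepsilon$. For $k\geq K$, let $\mathcal{D}_k$ be the family of closed dyadic cubes of the form $\prod_{n=1}^d[j_n 2^{-k},(j_n+1)2^{-k}]$ with $(j_1,\dots,j_d)\in\mathbb{Z}^d$. I select cubes recursively: at generation $K$, pick every $Q\in\mathcal{D}_K$ with $Q\subset\mathcal{O}$; at generation $k>K$, pick $Q\in\mathcal{D}_k$ with $Q\subset\mathcal{O}$ whose parent in $\mathcal{D}_{k-1}$ is not contained in $\mathcal{O}$. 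Enumerate the selected cubes as $\{Q_m\}_m$ and let $I_m$ be the open interior of $Q_m$; then $I_m\in\mathscr{I}(\mathbb{R}^d)$ and $\mathrm{size}(I_m)\leq d\cdot 2^{-K}<\varepsilon$, which yields the size control.

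Disjointness of the $I_m$ follows from two observations. Cubes of the same generation have disjoint interiors by construction. If two selected cubes were strictly nested, say $Q'\subset Q$ with $Q'\in\mathcal{D}_k$ and $Q\in\mathcal{D}_j$ for some $j<k$, then the parent of $Q'$ in $\mathcal{D}_{k-1}$ would be a dyadic sub-cube of $Q$, hence contained in $\mathcal{O}$, contradicting the selection rule that forbade picking $Q'$ unless its parent lies outside $\mathcal{O}$ (and $k>K$ since $k>j\geq K$). Hence the $I_m$ are pairwise disjoint.

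The a.e.-covering is the core step. The inclusion $\bigcup_m I_m\subset\mathcal{O}$ is immediate. Let $H:=\bigcup_{k\geq 0}\bigcup_{n=1}^d\bigcup_{j\in\mathbb{Z}}\{y\in\mathbb{R}^d:y_n=j 2^{-k}\}$ be the countable union of dyadic hyperplanes. For $x\in\mathcal{O}\setminus H$, the unique cube $Q_k(x)\in\mathcal{D}_k$ containing $x$ does so in its interior, and by openness of $\mathcal{O}$ satisfies $Q_k(x)\subset\mathcal{O}$ for all sufficiently large $k$. Taking $k^{\ast}$ to be the smallest $k\geq K$ with this property, $Q_{k^{\ast}}(x)$ is selected (either $k^{\ast}=K$ and we picked all such, or $k^{\ast}>K$ and the parent fails containment by minimality of $k^{\ast}$). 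Thus $x\in I_{m^{\ast}}$ for the corresponding index, and $\mathcal{O}\setminus\bigcup_m I_m\subset H$.

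To finish I would verify that $H\in\mathcal{N}(\mathbb{R}^d)$ directly from Definition 1.2 (without invoking outer measure): each individual hyperplane is covered by open intervals of arbitrarily small total measure by slicing it into bounded pieces and enclosing each in a thin slab, and a standard $\eta/2^j$ diagonal argument then covers all countably many hyperplanes composing $H$. The converse assertion is immediate: any disjoint union $\bigsqcup_m I_m$ of open intervals is a union of open sets and therefore open. The main subtlety I expect is the null-set verification for $H$, which must be carried out directly from the definition rather than by later measure-theoretic machinery; everything else is the routine bookkeeping of the dyadic construction.
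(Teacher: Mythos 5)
Your proof is correct and follows essentially the same route as the paper: a dyadic-cube selection at scales fine enough that $\mathrm{size}<\varepsilon$, pairwise disjointness from dyadic nesting, exact coverage of $\mathcal{O}$ off the dyadic hyperplanes, and the observation that those boundaries form a Lebesgue null set (which you rightly note must be checked directly from the definition, a point the paper merely asserts). The only differences are cosmetic --- you use closed cubes with the ``parent not contained in $\mathcal{O}$'' maximality rule, while the paper uses half-open cubes with a greedy exclusion of previously chosen cubes --- and both yield the same a.e.\ decomposition.
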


\begin{proof} 
For $m=(m_1,\dots,m_d)\in\mathbb{Z}^d$ and 
$n\in\mathbb{N}$, 
define the half-open $d$-dimensional intervals 
\[
I_{m,n}:=\prod_{k=1}^d\,[m_k 2^{-n},\, (m_k+1)2^{-n}). 
\]
For each fixed $n$ 
\[
\mathbb{R}^d=\bigsqcup_{m\in\mathbb{Z}^d} I_{m,n}
\]
Given $\varepsilon>0$, choose $N\in\mathbb{N}$ with $d\,2^{-N}<\varepsilon$. 
Then for all $n\ge N$, 
\[ \mathrm{size}(I_{m,n})=d\,2^{-n}\le d\,2^{-N}<\varepsilon. \]
Set 
\[
\mathcal{J}_N:=\{\,I_{m,N} : m \in \mathbb{Z}^d,  I_{m,N}
\subset \mathcal{O}\,\}
\]
and for $\ell\geq 1$ define inductively 
\[
\mathcal{J}_{N+\ell}
:=\Bigl\{\,I_{m,N+\ell} : m\in\mathbb{Z}^d, I_{m,N+\ell}\subset 
\mathcal{O}\cap
 \Bigl (\bigcup_{k=N}^{N+\ell-1}\ \bigcup_{J\in\mathcal{J}_k} J \Bigr )^c\Bigr\}. 
\]
Let $\mathcal{J}:=\bigcup\limits_{\ell=0}^\infty \mathcal{J}_{N+\ell}$. 
Then $\mathcal{J}$
is a pairwise disjoint, 
at most countable family of half-open $d$-dimensional intervals, 
and since the boundary of each such interval 
is a Lebesgue null set, we have
\[
\mathcal{O}=\bigsqcup_{J \in \mathcal{J}}J, 
\quad 
\bigsqcup_{J \in \mathcal{J}}J
=\bigsqcup_{m=1}^{\infty}I_m\ \ \text{a.e.}
\]
for a sequence of open $d$-dimensional intervals 
$\{I_m\}\subset \mathscr{I}(\mathbb{R}^d)$. 
This yields the stated decomposition, with 
$\mathrm{size}(I_{m,n})<\varepsilon$. 
The converse statement is immediate, 
since a (countable) disjoint union of open $d$-dimensional intervals is open. 
\end{proof}

To encompass all possible partitions, 
we define the family of countable partitions of an open set $\mathcal{O}$. 
\begin{defn}[Partitions of an open set by open $d$-dimensional intervals]
For an open set $\mathcal{O}$, 
we denote 
by 
$\Pi(\mathcal{O})$ 
the collections of all countable families 
$\{I_m\}_{m=1}^{\infty} \subset \mathscr{I}(\mathbb{R}^d)$
such that 
\[
\mathcal{O}=\bigsqcup_{m=1}^{\infty}I_m\ \ \text{a.e.}
\]

\end{defn}

Next we introduce the essential supremum and infimum. 
For convenience we fix the following notation for super-level and sub-level sets.
\begin{defn}[Super-level and sub-level sets]
\noindent 
\begin{itemize}
\item[{\rm(i)}] super-level set\,{\rm:} \quad
$\{f>a\}:=\{x\in \mathbb{R}^d : f(x)>a\}$. 
\item[{\rm(ii)}] sub-level set\,{\rm:}\quad
$\{f<a\}:=\{x\in \mathbb{R}^d : f(x)<a\}$. 
\end{itemize}
\end{defn}

\begin{defn}[Essential supremum and infimum]
\label{ess_inf+sup}
Given $A \subset \mathbb{R}^d$, 
define 
\noindent
\begin{itemize}

\item[{\rm(i)}] essential supremum{\rm:}\quad 
$\esssup\limits_{x\in A}f(x)
:=\, \inf \bigl \{ M \in \mathbb{R}:  \{f>M\}\cap A \in \mathcal{N}(\mathbb{R}^d) \bigr \}$. 

\item[{\rm(ii)}] essential infimum{\rm:}\quad 
$\essinf\limits_{x\in A}f(x):=\, \sup\{ m\in \mathbb{R} :  \{f<m\}\cap A \in \mathcal{N}(\mathbb{R}^d) \}$. 

\end{itemize}
Equivalently, 
\[
\essinf_{x \in A}f(x) \leq 
f(x) 
\leq 
\esssup_{x \in A}f(x) \ \ \text{a.e. on }A. 
\]
\end{defn}

Together with countable partitions of open sets, 
we define the lower and upper integrals as follows.
\begin{defn}[Lower and upper integrals for nonnegative functions]
Let $\mathcal{O}\subset \mathbb{R}^d$ be an open set and  
$f:\mathcal{O} \to \mathbb{R}$ with $f \geq 0$ a.e. 
For 
$\{I_m\}\in \Pi(\mathcal{O})$, set 
 \[ \alpha_m:=\essinf\limits_{x \in I_m}f(x), \quad 
\beta_m:=\esssup\limits_{x \in I_m}f(x). \]
Define 
\begin{itemize}
\item[\rm{(i)}] lower integral\,{\rm:}\quad 
$\displaystyle 
\underline{\int_{\mathcal{O}}}f(x)\,d_{\mathrm{M}}x:=
\sup\limits_{\{I_m\} \in \Pi(\mathcal{O})}
\sum\limits_{m=1}^{\infty}\alpha_m|I_m|
$. 

\item[\rm{(ii)}] upper integral\,{\rm:}\quad 
$\displaystyle 
\overline{\int_{\mathcal{O}}}f(x)\,d_{\mathrm{M}}x
:=
\inf\limits_{\{I_m\} \in \Pi(\mathcal{O})}
\sum\limits_{m=1}^{\infty}\beta_m|I_m|
$. 
\end{itemize}
For brevity we also write 
\[
L(f, \{I_m\})
:=\sum\limits_{m=1}^{\infty}\alpha_m|I_m|, \quad 
U(f, \{I_m\})
:=\sum\limits_{m=1}^{\infty}\beta_m|I_m|. 
\]

\end{defn}

We define the integral by requiring the lower and upper integrals to coincide.
\begin{defn}[Integrable functions]
Let $\mathcal{O} \subset \mathbb{R}^d$ be open and 
$f:\mathcal{O} \to \mathbb{R}$ with $f\geq 0$ a.e. 
If 
\[ A:=\overline{\int_{\mathcal{O}}}f(x)\,d_{\mathrm{M}}x=
\underline{\int_{\mathcal{O}}}f(x)\,d_{\mathrm{M}}x<\infty, \]
then $f$ is said to be Mimura integrable on $\mathcal{O}$, 
written 
$f \in \mathcal{M}^1(\mathcal{O})$. 
The common value $A$ is called the
Mimura integral of $f$ over $\mathcal{O}$ and is denoted 
\[ \int_{\mathcal{O}}f(x)\,d_{\mathrm{M}}x.\]
\end{defn}

\begin{rem}
As with other integrals, 
the choice of the integration variable is immaterial, 
so we freely use
\[
\int_{\mathcal{O}}f(x)\,d_{\mathrm{M}}x, \quad 
\int_{\mathcal{O}}f(y)\,d_{\mathrm{M}}y, \quad 
\int_{\mathcal{O}}f\,d_{\mathrm{M}}. 
\]
\end{rem}

\begin{rem}
It will be shown later that
$\mathcal{M}^1(\mathcal{O})$
coincides with the usual class of Lebesgue integrable functions 
$\mathcal{L}^1(\mathcal{O})$. 
\end{rem}

\begin{exam}[Integrability of the Dirichlet function]
The Dirichlet function is not Riemann integrable, 
but under our definition it is readily seen to be integrable with value 
\[
f(x):=\chi_{\mathbb{Q}}(x), \quad 
\int_{\mathbb{R}}f(x)\,d_{\mathrm{M}}x=0. 
\]
\end{exam}

\section{Finite-Measure Sets $\mathcal{M}(\mathbb{R}^d)$}

Whereas the Lebesgue theory proceeds from a construction of measure to a construction of the integral, we have already defined an integral without first constructing a measure. Unlike the standard approach, we can now \emph{define} a measure by means of the integral. 

The following definition identifies sets with functions via their characteristic functions. 
\begin{defn}[Characteristic functions]
For a set $E$, define its characteristic function by
\[ 
\chi_{E}(x):=
\begin{cases}
1, & (x \in E), \\[1ex]
0, & (x \notin E). 
\end{cases}
\]
\end{defn}

\begin{defn}[Outer and inner measures]
For $E\subset \mathbb{R}^d$, 
define the outer and inner measures by
\begin{itemize}
\item[\rm{(i)}] outer measure\,{\rm:}\quad 
$\displaystyle 
\overline{\mu}(E):=\overline{\int_{\mathbb{R}^d}}\chi_{E}(x)\,d_{\mathrm{M}}x$. 
\item[\rm{(ii)}] inner measure\,{\rm:}\quad 
$\displaystyle 
\underline{\mu}(E):=\underline{\int_{\mathbb{R}^d}}\chi_{E}(x)\,d_{\mathrm{M}}x
$. 
\end{itemize}
\end{defn}

\begin{defn}[Finite-measure sets and their measures]
We say $E\subset \mathbb{R}^d$ 
has \emph{finite measure} if 
\[ \overline{\mu}(E)=\underline{\mu}(E)<\infty\]
and write $E \in \mathcal{M}(\mathbb{R}^d)$. 
The common value 
$\mu(E):=\overline{\mu}(E)=\underline{\mu}(E)$
is the \emph{measure} of $E$.  
\end{defn}

\begin{rem}
$E \in \mathcal{M}(\mathbb{R}^d) \iff \chi_E \in \mathcal{M}^1(\mathbb{R}^d)$. 
The class of finite-measure sets will be shown to coincide with the usual Lebesgue measurable sets of finite measure {\rm(}it is not all Lebesgue measurable sets{\rm)}. 
\end{rem}

\begin{defn}[$\mathcal{O}(\mathbb{R}^d)$ and $\mathcal{F}(\mathbb{R}^d)$]
Let $\mathcal{O}(\mathbb{R}^d)$ denote 
the family of open subsets of $\mathbb{R}^d$, 
and let $\mathcal{F}(\mathbb{R}^d)$ denote 
the family of closed subsets of $\mathbb{R}^d$. 
\end{defn}

From the definitions of finite-measure sets 
and of the measure, measurability and measure of open intervals 
and open sets follow immediately. 
In the standard Lebesgue theory, 
one first introduces Carath\'eodory measurability and then 
verifies that intervals --- the starting point for outer measure --- 
are measurable; this is somewhat tedious. 
Under our definitions, it is straightforward to check that intervals lie in 
$\mathcal{M}(\mathbb{R}^d)$, and 
we will later show 
that elements of 
$\mathcal{M}(\mathbb{R}^d)$ 
satisfy Carath\'eodory's condition.

\begin{prop}
If $\mathcal{O}\subset\mathbb{R}^d$ 
is a bounded open set, then
$\mathcal{O}\in\mathcal{M}(\mathbb{R}^d)$. 
In particular, for any $I\in\mathscr{I}(\mathbb{R}^d)$
we have $\mu(I)=|I|$; moreover, if 
$\mathcal{O}=\bigsqcup\limits_{m=1}^{\infty} I_m\ \text{a.e.}$, then 
\[
\mu(\mathcal{O})=\sum_{m=1}^{\infty}|I_m| .
\]
\end{prop}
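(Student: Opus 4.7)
I plan to reduce each assertion to constructing an explicit element of $\Pi(\mathbb{R}^d)$ tailored to the set in question, built by patching the Open Set Decomposition Lemma together with the prescribed interval or intervals. The key simplification throughout is that $\chi_E$ takes only the values $0$ and $1$, so for an open interval $J\subset\mathbb{R}^d$ one has $\beta_J=1$ iff $J\cap E$ is non-null (equivalently, nonempty when $E$ is open), and $\alpha_J=1$ iff $J\setminus E$ is null; in all other cases these quantities vanish.

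\textbf{Interval case.} For $I\in\mathscr{I}(\mathbb{R}^d)$ bounded I would show $\overline{\mu}(I)\le|I|$ by applying the Open Set Decomposition Lemma to $\mathbb{R}^d\setminus\overline{I}$ to produce a disjoint family $\{J_n\}$ and taking $\{I\}\cup\{J_n\}$ as a partition of $\mathbb{R}^d$, valid because $\partial I$ is null. The upper sum collapses to $|I|$ since $\beta_I=1$ and $\beta_{J_n}=0$. For $\underline{\mu}(I)\ge|I|$ I would partition $I$ into $N^d$ congruent open sub-boxes $\{K_\alpha\}$, whose boundaries are null and whose volumes sum exactly to $|I|$, append the same $\{J_n\}$, and read off $L=\sum_\alpha|K_\alpha|=|I|$ since $K_\alpha\subset I$ forces $\alpha_{K_\alpha}=1$. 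The exactness of the congruent split is what lets the argument proceed without invoking countable additivity of Lebesgue measure, which has not yet been developed.

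\textbf{Bounded open set.} Given $\mathcal{O}=\bigsqcup_m I_m$ a.e., I would apply the Open Set Decomposition Lemma to $\mathbb{R}^d\setminus\overline{\mathcal{O}}$ to obtain disjoint intervals $\{J_n\}$ and take the combined family $\{I_m\}\cup\{J_n\}$ as a partition of $\mathbb{R}^d$. Because $I_m\setminus\overline{\mathcal{O}}$ is open and null, hence empty, we have $I_m\subset\overline{\mathcal{O}}$, so $I_m\setminus\mathcal{O}$ is null and $\alpha_{I_m}=\beta_{I_m}=1$; meanwhile $J_n\cap\mathcal{O}=\emptyset$ forces $\beta_{J_n}=0$. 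This single partition delivers both $\overline{\mu}(\mathcal{O})\le\sum_m|I_m|$ and $\underline{\mu}(\mathcal{O})\ge\sum_m|I_m|$, giving $\mu(\mathcal{O})=\sum_m|I_m|$. For finiteness, pick any open box $B\supset\overline{\mathcal{O}}$; then $I_m\subset B$ for all $m$, and elementary finite additivity for disjoint boxes yields $\sum_{m=1}^N|I_m|\le|B|$ for every $N$.

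\textbf{Main obstacle.} The step demanding genuine care is checking that the combined family $\{I_m\}\cup\{J_n\}$ actually lies in $\Pi(\mathbb{R}^d)$: its union equals $\mathcal{O}\cup(\mathbb{R}^d\setminus\overline{\mathcal{O}})=\mathbb{R}^d\setminus\partial\mathcal{O}$ a.e., so the construction furnishes a valid a.e.\ partition only when $\partial\mathcal{O}$ is Lebesgue null. This is transparent when $\mathcal{O}$ is an interval or a finite union of intervals, but for a general bounded open set whose topological boundary has positive Lebesgue measure one would need a more delicate interlacing argument to cover $\partial\mathcal{O}$ with auxiliary intervals whose contribution to both the upper and the lower sums can be controlled. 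This verification is the place where I expect the argument to demand the most care beyond the mechanical parts above.
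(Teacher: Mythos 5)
Your interval case, and your treatment of a bounded open set under the extra assumption that $\partial\mathcal{O}$ is Lebesgue null, are correct, and they supply precisely the justification the paper's own proof skips: the paper writes $\sum_m|I_m|=L(\chi_{\mathcal{O}},\{I_m\})\le\underline{\mu}(\mathcal{O})\le\overline{\mu}(\mathcal{O})\le U(\chi_{\mathcal{O}},\{I_m\})=\sum_m|I_m|$ for a partition $\{I_m\}\in\Pi(\mathcal{O})$, even though $\overline{\mu}(\mathcal{O})$ and $\underline{\mu}(\mathcal{O})$ are by definition an infimum and a supremum over $\Pi(\mathbb{R}^d)$, so one must explain why a partition of $\mathcal{O}$ alone controls them; your completion by a decomposition of $\mathbb{R}^d\setminus\overline{\mathcal{O}}$ is exactly that explanation, and, as you observe, it produces an element of $\Pi(\mathbb{R}^d)$ only when $\partial\mathcal{O}$ is null. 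Note also that the lower half needs no hypothesis on the boundary: keep only $I_1,\dots,I_N$, adjoin a decomposition of the open set $\mathbb{R}^d\setminus\bigcup_{m\le N}\overline{I_m}$, and let $N\to\infty$ to get $\underline{\mu}(\mathcal{O})\ge\sum_m|I_m|$ in full generality.

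The obstacle you flag at the end, however, is not a step that merely demands care: it cannot be closed, because with the definitions as literally given the upper half of the claim fails whenever $\partial\mathcal{O}$ is not null. Indeed, any $\{J_k\}\in\Pi(\mathbb{R}^d)$ covers $\mathbb{R}^d$ up to a null set, and every $J_k$ containing a point of $\overline{\mathcal{O}}$ meets $\mathcal{O}$ in a nonempty open, hence non-null, set, so $\esssup_{J_k}\chi_{\mathcal{O}}=1$ and $J_k$ contributes its full volume; therefore $U(\chi_{\mathcal{O}},\{J_k\})$ is at least the Lebesgue outer measure of $\overline{\mathcal{O}}$, no matter how the auxiliary intervals are interlaced. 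Taking $d=1$, $C\subset[0,1]$ a fat Cantor set of measure $\tfrac12$ and $\mathcal{O}=(0,1)\setminus C$, we have $\overline{\mathcal{O}}=[0,1]$, so $\overline{\mu}(\mathcal{O})\ge 1$, while the components of $\mathcal{O}$ give $\sum_m|I_m|=\tfrac12$ and lower sums (which count only intervals essentially contained in $\mathcal{O}$) give $\underline{\mu}(\mathcal{O})\le\tfrac12$; hence $\mathcal{O}\notin\mathcal{M}(\mathbb{R})$ and $\overline{\mu}(\mathcal{O})\neq\sum_m|I_m|$. So the gap you identified is genuine, but it is a defect of the statement and of the paper's proof (which tacitly treats $\Pi(\mathcal{O})$ as admissible in the definitions of $\overline{\mu}$ and $\underline{\mu}$), not something a finer construction on your side can repair: your argument proves the proposition exactly in the generality in which it is true under these definitions, namely for bounded open sets with Lebesgue-null boundary, and a complete proof of the stated result would require either adding that hypothesis or modifying the definition of the outer measure (for instance, allowing arbitrary countable interval covers rather than exact disjoint a.e.\ partitions in the upper sums).
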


\begin{proof}
Write 
$\mathcal{O}= \bigsqcup\limits_{m=1}^{\infty}I_m$ a.e.  
By the definitions of lower and upper integrals,
\begin{equation*} 
\begin{split} 
\sum_{m=1}^{\infty}|I_m|=L(\chi_{\mathcal{O}}, \{I_m\})
\leq \underline{\mu}(\mathcal{O}) 
\leq \overline{\mu}(\mathcal{O}) 
\leq U(\chi_{\mathcal{O}}, \{I_m\})= \sum_{m=1}^{\infty}|I_m|. 
\end{split}
\end{equation*}
Hence for every countable partition $\{I_m\}$ of $\mathcal{O}$, 
\[\overline{\mu}(\mathcal{O}) 
= \underline{\mu}(\mathcal{O}) 
=\sum_{m=1}^{\infty}|I_m|, 
\]
so $\mathcal{O} \in \mathcal{M}(\mathbb{R}^d)$ and 
$
\mu(\mathcal{O}) =\sum\limits_{m=1}^{\infty}|I_m|
$. 
\end{proof}

\begin{cor}
Given any sequence of open sets $\{\mathcal{O}_n\}$
and any $N\in\mathbb{N}$, if 
$
\bigcap\limits_{n=1}^{N}\mathcal{O}_n$ 
and $\bigcup\limits_{n=1}^{\infty}\mathcal{O}_n$
are bounded, 
then they belong to $\mathcal{M}(\mathbb{R}^d)$. 
\end{cor}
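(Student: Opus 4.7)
The plan is simply to reduce the statement to the preceding proposition, which asserts that every bounded open subset of $\mathbb{R}^d$ lies in $\mathcal{M}(\mathbb{R}^d)$. First I would invoke the elementary topological facts that any finite intersection of open sets is open, and that any union (in particular a countable union) of open sets is open. Applying these to the family $\{\mathcal{O}_n\}$, we obtain that both $\bigcap_{n=1}^N \mathcal{O}_n$ and $\bigcup_{n=1}^\infty \mathcal{O}_n$ are themselves open subsets of $\mathbb{R}^d$.

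Once openness is established, boundedness is supplied directly by the hypothesis of the corollary. The proposition then applies to each set and yields $\bigcap_{n=1}^N \mathcal{O}_n \in \mathcal{M}(\mathbb{R}^d)$ and $\bigcup_{n=1}^\infty \mathcal{O}_n \in \mathcal{M}(\mathbb{R}^d)$. There is no genuine obstacle: the substantive content — identifying the upper and lower integrals of the characteristic function of a bounded open set with the sum $\sum_m |I_m|$ over any interval decomposition — was already carried out in the proposition, so the corollary follows as a one-line consequence of the closure properties of the family of open sets under finite intersection and countable union. Nothing about infinite intersections or uncountable unions is asserted, which is consistent with what the preceding framework can handle (the former need not be open, and the restriction $N < \infty$ on the intersection index reflects exactly this).
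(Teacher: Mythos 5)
Your argument is exactly the paper's: the corollary follows from the facts that finite intersections and countable unions of open sets are open, combined with the boundedness hypothesis and the preceding proposition on bounded open sets. Correct, and essentially identical to the paper's one-line proof.
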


\begin{proof}
Finite intersections and countable unions of open sets are open.
\end{proof}

\begin{prop}[Monotonicity of the outer measure]
If $E \subset F \subset \mathbb{R}^d$, then 
$\overline{\mu}(E) \leq \overline{\mu}(F)$. 
\end{prop}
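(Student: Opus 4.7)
The plan is to prove monotonicity directly from the definition of the upper integral, using the fact that $E \subset F$ implies the pointwise inequality $\chi_E \leq \chi_F$ on $\mathbb{R}^d$. Since the outer measure is defined as the upper integral of the characteristic function, monotonicity of the characteristic functions should propagate through the essential supremum and then through the infimum over partitions.

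First, I would observe that from $E \subset F$ we get $\chi_E(x) \leq \chi_F(x)$ for every $x \in \mathbb{R}^d$, with no exceptional null set needed. Next, I would fix an arbitrary partition $\{I_m\} \in \Pi(\mathbb{R}^d)$ and note that the essential supremum is monotone in the function: since $\chi_E \leq \chi_F$ everywhere on $I_m$, any $M$ with $\{\chi_F > M\} \cap I_m \in \mathcal{N}(\mathbb{R}^d)$ automatically satisfies $\{\chi_E > M\} \cap I_m \subset \{\chi_F > M\} \cap I_m$, hence lies in $\mathcal{N}(\mathbb{R}^d)$ as well. Taking the infimum over such $M$ in Definition~\ref{ess_inf+sup} yields $\esssup_{I_m} \chi_E \leq \esssup_{I_m} \chi_F$.

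Summing against $|I_m| \geq 0$ then gives $U(\chi_E, \{I_m\}) \leq U(\chi_F, \{I_m\})$ for every $\{I_m\} \in \Pi(\mathbb{R}^d)$. Because both upper integrals are taken as infima over the \emph{same} collection of partitions $\Pi(\mathbb{R}^d)$, passing to the infimum on both sides preserves the inequality and delivers $\overline{\mu}(E) \leq \overline{\mu}(F)$.

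There is no real obstacle here; the only minor point worth flagging is the monotonicity of $\esssup$ under a pointwise inequality, which follows transparently from the sub-level set formulation in Definition~\ref{ess_inf+sup}. Everything else is definition-chasing, and the proof should be two or three lines once this is noted.
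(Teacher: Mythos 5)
Your proof is correct and follows essentially the same route as the paper: the paper likewise derives the claim from $\chi_E \leq \chi_F$ together with monotonicity of the upper integral, merely leaving implicit the details you spell out (monotonicity of $\esssup$ via super-level sets and passing to the infimum over the common partition family $\Pi(\mathbb{R}^d)$). Your version is a faithful, fully detailed expansion of the paper's one-line argument.
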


\begin{proof}
For any open $\mathcal{O}$ with $E \subset F \subset \mathcal{O}$, 
we have $\chi_E(x) \leq \chi_F(x)$ on $\mathcal{O}$; 
the claim follows from the definition of the upper integral. 
\end{proof}

The next statement identifies our outer measure with the usual Lebesgue outer measure. 
\begin{prop}[Representation of the outer measure]
\label{outer measure1}
For any $E \subset \mathbb{R}^d$, 
\[
\overline{\mu}(E)=\inf
\{
\mu(\mathcal{O}): E \subset \mathcal{O} \in \mathcal{O}(\mathbb{R}^d)
\}. 
\]
\end{prop}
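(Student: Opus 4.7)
The plan is to establish the two inequalities separately.

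For the easy direction $\overline{\mu}(E)\le\inf\{\mu(\mathcal{O}):E\subset\mathcal{O}\in\mathcal{O}(\mathbb{R}^d)\}$: for any open $\mathcal{O}\supset E$ with $\mathcal{O}\in\mathcal{M}(\mathbb{R}^d)$, monotonicity of $\overline{\mu}$ together with the identity $\overline{\mu}(\mathcal{O})=\mu(\mathcal{O})$ (from the previous Proposition) give $\overline{\mu}(E)\le\overline{\mu}(\mathcal{O})=\mu(\mathcal{O})$, and passing to the infimum suffices; open $\mathcal{O}$ with $\overline{\mu}(\mathcal{O})=\infty$ contribute only the vacuous bound.

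For the reverse inequality, assume $\overline{\mu}(E)<\infty$ and fix $\varepsilon>0$. First select a near-minimizing partition $\{I_m\}\in\Pi(\mathbb{R}^d)$ with $\sum_m\beta_m|I_m|<\overline{\mu}(E)+\varepsilon/2$. Because $\chi_E$ is $\{0,1\}$-valued, each $\beta_m\in\{0,1\}$, with $\beta_m=1$ precisely when $E\cap I_m\notin\mathcal{N}(\mathbb{R}^d)$; setting $M:=\{m:\beta_m=1\}$, the partition sum reduces to $\sum_{m\in M}|I_m|$, and $\mathcal{O}_1:=\bigcup_{m\in M}I_m$ is open. The residue $E\setminus\mathcal{O}_1$ sits inside the countable union of the null sets $\{E\cap I_m\}_{m\notin M}$ and $\mathbb{R}^d\setminus\bigsqcup_m I_m$, hence is itself null; cover it by open intervals $\{J_k\}$ with $\sum_k|J_k|<\varepsilon/2$, so that $\mathcal{O}:=\mathcal{O}_1\cup\bigcup_k J_k$ is an open superset of $E$.

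The remaining step, and the main technical obstacle, is to verify $\mu(\mathcal{O})\le\sum_{m\in M}|I_m|+\sum_k|J_k|$. This is a form of countable subadditivity for open covers of an open set by open intervals, which is not yet available in the text. The plan is to adapt the dyadic construction used in the proof of the Open Set Decomposition Lemma: at each scale $2^{-n}$, select a half-open dyadic cube provided it fits inside some member of the cover $\{I_m\}_{m\in M}\cup\{J_k\}$, processing scales inductively so the selected cubes remain pairwise disjoint. Because each $x\in\mathcal{O}$ belongs to some (open) interval of the cover, a sufficiently fine dyadic cube through $x$ lies inside that interval, so the selected cubes exhaust $\mathcal{O}$ up to a null boundary set. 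Passing to open interiors gives a partition $\{L_p\}\in\Pi(\mathcal{O})$ in which each $L_p$ is contained in a designated cover element; assigning every $L_p$ to such a container and summing yields $\mu(\mathcal{O})=\sum_p|L_p|\le\sum_{m\in M}|I_m|+\sum_k|J_k|<\overline{\mu}(E)+\varepsilon$, since the $L_p$'s assigned to a fixed container are disjoint subsets of it. Letting $\varepsilon\to 0$ completes the proof.
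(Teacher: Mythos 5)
Your proposal is correct, and its first half is exactly the paper's argument: take a near-optimal partition for the upper integral of $\chi_E$, observe that $\beta_m\in\{0,1\}$ with $\beta_m=1$ iff $E\cap I_m\notin\mathcal{N}(\mathbb{R}^d)$, and use the disjoint union of the intervals with $\beta_m=1$ as the approximating open set. Where you diverge is instructive. The paper simply asserts $E\subset\mathcal{O}':=\bigsqcup_{m\in M}I_m$ and reads off $\mu(\mathcal{O}')=\sum_{m\in M}|I_m|$ from the preceding Proposition (the cover stays disjoint, so no subadditivity is needed); strictly speaking that inclusion holds only up to the null set consisting of $E\cap I_m$ for $m\notin M$ together with $E\setminus\bigsqcup_m I_m$. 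You noticed this and repaired it by covering the null residue with intervals $\{J_k\}$ of total measure $<\varepsilon/2$, at the price that $\mathcal{O}_1\cup\bigcup_k J_k$ is no longer a disjoint union, forcing you to prove a subadditivity estimate from scratch. Your dyadic subordinate-partition argument does close that gap correctly: the selected cubes exhaust $\mathcal{O}$, their interiors form an element of $\Pi(\mathcal{O})$, and grouping the $L_p$ by their designated container and applying the previous Proposition plus monotonicity inside each container gives $\sum_p|L_p|\le\sum_{m\in M}|I_m|+\sum_k|J_k|$. So your proof is longer but watertight on a point the paper glosses over; the paper's proof is shorter because it keeps the covering intervals pairwise disjoint throughout (and, implicitly, is content with inclusion up to a null set, as in its own Corollary \ref{outer measure2}). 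Note also that you were right not to invoke $\sigma$-subadditivity here, since in this development it is deduced \emph{from} this very Proposition.
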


\begin{proof}
By definition, for any $\varepsilon>0$ there exists a partition $\{I_m\}$ with 
\[
\sum_{m=1}^{\infty}(\esssup_{x \in I_m}\chi_{E}(x))|I_m|<
\overline{\mu}(E)+\varepsilon. 
\]
Now 
\[
\esssup_{x \in I_m}\chi_{E}(x)=
\begin{cases}
1, & (E \cap I_m \notin \mathcal{N}(\mathbb{R}^d))\\[1ex]
0, & (E \cap I_m \in \mathcal{N}(\mathbb{R}^d))
\end{cases}
\]
and terms with value $0$ do not contribute. 
Considering only those $I_{m(k)}$ with 
$E \cap I_{m(k)} \notin \mathcal{N}(\mathbb{R}^d)$, we obtain 
\[
\sum_{m=1}^{\infty}(\esssup_{x \in I_m}\chi_{E}(x))|I_m|=
\sum_{k \in K}|I_{m(k)}|<\overline{\mu}(E)+\varepsilon. 
\]
where $K$ is at most countable. 
Set
\[
E\subset \mathcal{O}':=\bigsqcup_{k\in K}I_{m(k)}, 
\quad 
\mu(\mathcal{O}')=\sum_{k \in K}|I_{m(k)}|
\]
Since $\mathcal{O}'$ is open and $\varepsilon>0$ is arbitrary, 
\[
\inf\limits_{\substack{\mathcal{O} \supset E \\ 
\mathcal{O} \in \mathcal{O}(\mathbb{R}^d)}}\mu(\mathcal{O}) \leq \overline{\mu}(E). 
\]
The reverse inequality follows from monotonicity. 
\end{proof}

\begin{cor}\label{outer measure2}
For $E \subset \mathbb{R}^d$, 
\[
\overline{\mu}(E)=
\inf \left \{\sum_{m=1}^{\infty}|I_m| :
E \subset \bigsqcup_{m=1}^{\infty}I_m\text{ a.e.},\  I_m \in \mathscr{I}(\mathbb{R}^d)
\right \}. 
\]
\end{cor}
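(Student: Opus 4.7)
The plan is to refine Proposition~\ref{outer measure1}, which represents $\overline{\mu}(E)$ as an infimum over open supersets, by restricting the admissible covers to pairwise disjoint families of open intervals whose union contains $E$ up to a Lebesgue null set. I would prove the two inequalities separately, using the Open Set Decomposition Lemma of~\S1 for the easier direction and a small-null-enlargement reduction to Proposition~\ref{outer measure1} for the harder direction.

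For $\mathrm{RHS}\le\overline{\mu}(E)$, I take an arbitrary open $\mathcal{O}\supset E$ and apply the Open Set Decomposition Lemma to write $\mathcal{O}=\bigsqcup_m I_m$ a.e.\ with $\{I_m\}\subset\mathscr{I}(\mathbb{R}^d)$; the preceding proposition on open sets gives $\sum_m|I_m|=\mu(\mathcal{O})$. Since $E\setminus\bigsqcup_m I_m\subset\mathcal{O}\setminus\bigsqcup_m I_m\in\mathcal{N}(\mathbb{R}^d)$, the family $\{I_m\}$ is admissible on the right-hand side, so $\mathrm{RHS}\le\mu(\mathcal{O})$. Taking the infimum over $\mathcal{O}$ and invoking Proposition~\ref{outer measure1} yields $\mathrm{RHS}\le\overline{\mu}(E)$.

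For the reverse inequality $\overline{\mu}(E)\le\mathrm{RHS}$, fix any admissible family $\{I_m\}$ and any $\varepsilon>0$. Write $N:=E\setminus\bigsqcup_m I_m$, which is Lebesgue null by hypothesis, and cover $N$ by open intervals $\{J_k\}$ with $\sum_k|J_k|<\varepsilon$. The set $\mathcal{O}^*:=\bigsqcup_m I_m\cup\bigcup_k J_k$ is open and contains $E$, so Proposition~\ref{outer measure1} gives $\overline{\mu}(E)\le\mu(\mathcal{O}^*)$. The remaining---and principal---step is the subadditive estimate $\mu(\mathcal{O}^*)\le\sum_m|I_m|+\sum_k|J_k|<\sum_m|I_m|+\varepsilon$, after which letting $\varepsilon\downarrow 0$ completes the argument. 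I plan to obtain this estimate by adapting the dyadic construction from the proof of the Open Set Decomposition Lemma: at each dyadic level, select only those dyadic cubes that lie entirely inside some single $I_m$ (with priority), or failing that, entirely inside some single $J_k$. Each point of $\mathcal{O}^*$ lies in some $I_m$ or $J_k$ and is therefore covered at a sufficiently fine level, so the selected family partitions $\mathcal{O}^*$ up to a null set. Pairwise disjointness of $\{I_m\}$ ensures that each cube is charged against at most one $I_m$, and the cubes charged to a given $I_m$ (resp.\ $J_k$) are pairwise disjoint subsets summing to at most $|I_m|$ (resp.\ $|J_k|$); summing over $m$ and $k$ then delivers the desired bound.
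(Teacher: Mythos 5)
Your argument is correct, but it is heavier than it needs to be, and the extra weight is concentrated exactly where the paper intends the corollary to be immediate. Your first inequality ($\mathrm{RHS}\le\overline{\mu}(E)$) is fine, though note it is already contained verbatim in the proof of Proposition \ref{outer measure1}: the set $\mathcal{O}'=\bigsqcup_k I_{m(k)}$ constructed there is itself a pairwise disjoint family of open intervals covering $E$ a.e.\ with $\sum_k|I_{m(k)}|<\overline{\mu}(E)+\varepsilon$, so no fresh appeal to the open set decomposition lemma is needed. For the reverse inequality, your detour through the null set $N$, the $\varepsilon$-cover $\{J_k\}$, and the dyadic priority selection re-proves, in effect, a subadditivity estimate for unions of intervals; this can be bypassed entirely. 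Since the upper integral is built from essential suprema, it is insensitive to modifying $E$ by a Lebesgue null set: $\esssup_{x\in L}\chi_E=\esssup_{x\in L}\chi_{E\cap\mathcal{O}'}$ for every interval $L$ when $E\setminus\mathcal{O}'\in\mathcal{N}(\mathbb{R}^d)$, so $\overline{\mu}(E)=\overline{\mu}(E\cap\mathcal{O}')\le\overline{\mu}(\mathcal{O}')=\sum_m|I_m|$, the last two steps by monotonicity and by the identity $\mu(\mathcal{O}')=\sum_m|I_m|$ for open sets partitioned a.e.\ into intervals. That said, your dyadic argument is sound as written, provided you also justify the charging bound ``disjoint cubes inside $I_m$ have total volume at most $|I_m|$'' (e.g.\ by applying monotonicity to the open set formed by their interiors), and it has the incidental merit of proving a covering estimate for possibly overlapping interval families --- precisely the kind of fact the paper later uses tacitly in the $\sigma$-subadditivity lemma --- at the cost of considerably more machinery than this corollary requires.
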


Since our outer measure agrees with the Lebesgue outer measure, 
the following is immediate.

\begin{lem}[$\sigma$-subadditivity]
For any sequence $\{E_n\}$ with $E_n \subset \mathbb{R}^d$, 
\[
\overline{\mu}\left(\bigcup_{n=1}^{\infty}E_n\right ) 
\leq \sum_{n=1}^{\infty}\overline{\mu}(E_n). 
\]
\end{lem}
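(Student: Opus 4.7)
\noindent\textit{Proof proposal.} The plan is to adapt the classical argument for $\sigma$-subadditivity of the Lebesgue outer measure, using Corollary~\ref{outer measure2} as the working characterization of $\overline{\mu}$ and Lemma~1.5 (open set decomposition) to convert countable interval unions into disjoint a.e.\ interval covers.

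If $\sum_n \overline{\mu}(E_n)=\infty$, the inequality is trivial, so I would assume this sum is finite and fix $\varepsilon>0$. For each $n$, by Corollary~\ref{outer measure2}, I would pick a disjoint family $\{I_{n,m}\}_{m=1}^\infty \subset \mathscr{I}(\mathbb{R}^d)$ with $E_n \subset \bigsqcup_m I_{n,m}$ a.e.\ and $\sum_m |I_{n,m}| < \overline{\mu}(E_n) + \varepsilon\, 2^{-n}$. The doubly indexed countable family $\{I_{n,m}\}_{n,m}$ then covers $\bigcup_n E_n$ up to a countable union of null sets, which is itself null; that is, $\bigcup_n E_n \subset \bigcup_{n,m} I_{n,m}$ a.e.

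To re-enter Corollary~\ref{outer measure2}, which insists on disjointness, I would apply Lemma~1.5 to the open set $\mathcal{O}:=\bigcup_{n,m} I_{n,m}$, producing a disjoint a.e.\ decomposition $\mathcal{O}=\bigsqcup_k J_k$ by open intervals. Then $\bigcup_n E_n \subset \bigsqcup_k J_k$ a.e., so Corollary~\ref{outer measure2} yields $\overline{\mu}\bigl(\bigcup_n E_n\bigr)\le \sum_k |J_k|$.

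The main obstacle is the sum comparison $\sum_k |J_k| \le \sum_{n,m} |I_{n,m}|$: although $\{J_k\}$ is the canonical disjoint decomposition of $\mathcal{O}$, nothing yet prevents the overlaps among the $\{I_{n,m}\}$ from making the right-hand side strictly smaller in principle. I would handle this by truncation: after reindexing $\{I_{n,m}\}$ as $\{K_\ell\}$, the bounded open set $\mathcal{O}_N := \bigcup_{\ell=1}^N K_\ell$ lies in $\mathcal{M}(\mathbb{R}^d)$ by Proposition~2.6, and an elementary partitioning argument that slices $\mathcal{O}_N$ along the boundaries of $K_1,\dots,K_N$ yields the finite subadditivity $\mu(\mathcal{O}_N) \le \sum_{\ell=1}^N |K_\ell|$. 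Passing $N\to\infty$ via monotone continuity on the nested bounded open sets $\mathcal{O}_N \nearrow \mathcal{O}$ then gives $\sum_k |J_k| = \mu(\mathcal{O}) \le \sum_\ell |K_\ell|$. Combining everything, $\overline{\mu}\bigl(\bigcup_n E_n\bigr) \le \sum_n \overline{\mu}(E_n) + \varepsilon$, and letting $\varepsilon\to 0$ concludes. Alternatively, once Corollary~\ref{outer measure2} is formally recognized as the standard Lebesgue outer measure construction, $\sigma$-subadditivity is immediate from classical outer-measure theory, as the paragraph preceding the lemma already indicates.
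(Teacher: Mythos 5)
Your overall strategy is the same as the paper's: for each $E_n$ choose an open cover $\mathcal{O}_n=\bigsqcup_m I_{n,m}$ with $\mu(\mathcal{O}_n)<\overline{\mu}(E_n)+\varepsilon 2^{-n}$ (Proposition \ref{outer measure1} / Corollary \ref{outer measure2}), take the union $\mathcal{O}'=\bigcup_{n,m}I_{n,m}$, and conclude by monotonicity. You are also right to flag the one step the paper passes over in silence: since Corollary \ref{outer measure2} only admits \emph{disjoint} interval covers, the inequality $\overline{\mu}(\mathcal{O}')\le\sum_{n,m}|I_{n,m}|$ is not literally an instance of monotonicity when the $I_{n,m}$ overlap across different $n$, and your truncation-plus-grid argument (slice the finite union $\mathcal{O}_N$ along the faces of $K_1,\dots,K_N$ and compare with Proposition 2.6) is the right way to get the finite-stage bound $\mu(\mathcal{O}_N)\le\sum_{\ell=1}^N|K_\ell|$.

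The one genuine problem is the passage $N\to\infty$. You invoke ``monotone continuity'' for the nested open sets $\mathcal{O}_N\nearrow\mathcal{O}$, but in this paper continuity from below is Proposition \ref{continuity}, which is proved in \S 6 from $\sigma$-additivity, which rests on the $\sigma$-algebra structure, which in turn uses the very $\sigma$-subadditivity you are proving; citing it here is circular. The step must instead be done by hand: writing $\mathcal{O}=\bigsqcup_k J_k$ a.e., shrink each $J_k$ to a closed box $\overline{J_k^{\delta}}\subset J_k$ with $|J_k^{\delta}|>|J_k|-\delta 2^{-k}$; for each $M$ the compact set $\bigcup_{k\le M}\overline{J_k^{\delta}}\subset\bigcup_{\ell}K_\ell$ is covered by finitely many $K_\ell$, so the grid argument gives $\sum_{k\le M}|J_k^{\delta}|\le\sum_{\ell}|K_\ell|$, and letting $M\to\infty$, $\delta\to 0$ yields $\sum_k|J_k|\le\sum_\ell|K_\ell|$. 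With that repair your proof is complete and, if anything, more careful than the paper's own one-line appeal to monotonicity at this point. (Your closing alternative --- ``this is just classical outer-measure theory'' --- is what the paper's preamble suggests, but it defers rather than supplies the argument.)
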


\begin{proof}
Because 
$\overline{\mu}(E)= \inf\limits_{\mathcal{O} \supset E}\mu(\mathcal{O})$, 
for each $E_n$ and every $\varepsilon>0$ there is an open set 
$\mathcal{O}_n=\bigsqcup\limits_{m=1}^{\infty}I_{m,n}$ with 
\[
\mu(\mathcal{O}_n)=\sum_{m=1}^{\infty}|I_{m,n}| <\overline{\mu}(E_n)+\frac{\varepsilon}{2^n}. 
\]
Then 
\[
\sum_{n=1}^{\infty}\sum_{m=1}^{\infty}|I_{m,n}|
\leq \sum_{n=1}^{\infty}\overline{\mu}(E_n) + \varepsilon. 
\]
Let $\mathcal{O}':=\bigcup\limits_{n=1}^{\infty}\bigcup\limits_{m=1}^{\infty}I_{m,n}$, 
an open set containing 
$
\bigcup\limits_{n=1}^{\infty}E_n
$. 
By monotonicity, 
\[
\overline{\mu}\left(\bigcup_{n=1}^{\infty}E_n  \right )
\leq \overline{\mu}(\mathcal{O}')
\leq \sum_{n=1}^{\infty}\sum_{m=1}^{\infty}|I_{m,n}|
\leq \sum_{n=1}^{\infty}\overline{\mu}(E_n) + \varepsilon
\]
Since $\varepsilon>0$ is arbitrary, the claim follows.
\end{proof}

Whereas the outer measure was approximated from the outside by open sets, 
the inner measure admits approximation 
from the inside by closed (indeed, compact) sets.

\begin{prop}[Representation of the inner measure]\label{inner measure1}
For any $E \subset \mathbb{R}^d$, 
\[
\underline{\mu}(E)=\sup
\{
\overline{\mu}(F) : F \subset E\ \text{a.e.}, F \in \mathcal{F}(\mathbb{R}^d)
\}. 
\]

\begin{rem}
The \lq \lq a.e.\rq \rq \
qualifier can be omitted, 
but it is convenient in applications, so we retain it here
{\rm(}and likewise in Corollary \ref{inner measure2}{\rm)}. 
Later we will show 
$\mathcal{F}(\mathbb{R}^d) \subset \mathcal{M}(\mathbb{R}^d)$, 
permitting $\overline{\mu}$ to be replaced by $\mu$. 
\end{rem}

\begin{proof}
Let $\alpha_m:=\essinf\limits_{x \in I_m}\chi_E(x)$. 
By the definition of $\underline{\mu}(E)$, 
there exist $\{I_m\} \in  \Pi(\mathcal{O})$ with 
$E \subset \mathcal{O}$ such that  
\[
\underline{\mu}(E) -\frac{\varepsilon}{2} <
\sum_{m=1}^{\infty}\alpha_m |I_m|. 
\]
The right-hand side can be approximated by a finite sum:
\[
\sum_{m=1}^{\infty}\alpha_m |I_m|
-\sum_{m=1}^{N}\alpha_m |I_m|<\frac{\varepsilon}{2}
\]
Hence 
\[
\underline{\mu}(E) -\varepsilon < \sum_{m=1}^N \alpha_m |I_m|
= \sum_{k \in K}\alpha_{m(k)}|I_{m(k)}|\leq \underline{\mu}(E). 
\]
where 
\[
K:=\{k \in \{1, 2, \dots, N\}: \overline{I_{m(k)}} \subset E\ \ \text{a.e.}\}. 
\]
Set 
\[
F:=
\bigsqcup_{k \in K}\overline{I_{m(k)}}\subset E\ \ \text{a.e.}
\]
Then $F$ is compact, and we obtain
\[
\underline{\mu}(E)=\sup_{\substack{F \subset E \text{ a.e.} \\
F \in \mathcal{F}(\mathbb{R}^d)}}\overline{\mu}(F). 
\]
\end{proof}
\end{prop}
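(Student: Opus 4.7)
My plan follows the text's constructive approach: for each $\varepsilon > 0$, produce a compact $F \subset E$ a.e.\ whose outer measure approximates $\underline{\mu}(E)$ to within $\varepsilon$. The crucial preliminary observation is that $\chi_E$ is $\{0,1\}$-valued, so on every open interval $I_m$ the essential infimum $\alpha_m := \essinf_{x \in I_m} \chi_E(x)$ takes only the values $0$ or $1$; it equals $1$ precisely when $I_m \setminus E \in \mathcal{N}(\mathbb{R}^d)$, i.e.\ when $I_m \subset E$ a.e. Hence the supremum defining $\underline{\mu}(E)$ collapses to
\[
\underline{\mu}(E) = \sup_{\{I_m\}} \sum_{k\,:\,I_k \subset E \text{ a.e.}} |I_k|,
\]
and intervals failing this condition contribute nothing.

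For the constructive direction $\sup_F \overline{\mu}(F) \geq \underline{\mu}(E)$, I would fix $\varepsilon > 0$, select a partition $\{I_m\} \in \Pi(\mathcal{O})$ of some open $\mathcal{O} \supset E$ with $\sum_{m=1}^{\infty} \alpha_m|I_m| > \underline{\mu}(E) - \varepsilon/2$, then truncate at an $N$ large enough that the finite partial sum still exceeds $\underline{\mu}(E) - \varepsilon$. Keeping only the nonzero-contribution indices $K := \{k \leq N : \alpha_k = 1\}$ and passing to closures (boundaries of open $d$-dimensional intervals are null, so $\overline{I_k} \subset E$ a.e.\ for $k \in K$), I would set
\[
F := \bigcup_{k \in K} \overline{I_k},
\]
a finite union of closed $d$-dimensional intervals, hence compact, with $F \subset E$ a.e. The open set $G := \bigsqcup_{k \in K} I_k \subset F$ satisfies $\mu(G) = \sum_{k \in K} |I_k|$ by the preceding proposition, and monotonicity of $\overline{\mu}$ yields
\[
\overline{\mu}(F) \geq \overline{\mu}(G) = \mu(G) > \underline{\mu}(E) - \varepsilon,
\]
so $\sup_F \overline{\mu}(F) \geq \underline{\mu}(E)$ upon letting $\varepsilon \to 0$.

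The reverse inequality $\sup_F \overline{\mu}(F) \leq \underline{\mu}(E)$ is the more delicate point, since at this stage we do not yet know $\mathcal{F}(\mathbb{R}^d) \subset \mathcal{M}(\mathbb{R}^d)$. Monotonicity of the lower integral (from $\chi_F \leq \chi_E$ a.e.) immediately gives $\underline{\mu}(F) \leq \underline{\mu}(E)$ for any closed $F \subset E$ a.e.; combining this with the forthcoming identification $\overline{\mu}(F) = \underline{\mu}(F)$ for closed $F$ (flagged in the accompanying remark) completes the equality. The main obstacle I anticipate is therefore this forward reference, together with the bookkeeping of the truncation step and the informal passage between partitions of $\mathbb{R}^d$ and partitions of some open $\mathcal{O} \supset E$; both are resolved by noting that intervals lying essentially outside $E$ make zero contribution and can be freely inserted or removed.
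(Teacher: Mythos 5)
Your first direction is sound and is essentially the paper's own construction: pass to a near-optimal partition, truncate to a finite sum, keep only the indices with $\alpha_k=1$ (equivalently $I_k\subset E$ a.e.), take closures to get a compact $F\subset E$ a.e., and bound $\overline{\mu}(F)\ge \mu\bigl(\bigsqcup_{k\in K}I_k\bigr)=\sum_{k\in K}|I_k|>\underline{\mu}(E)-\varepsilon$. Your observation that $\alpha_m\in\{0,1\}$ with $\alpha_m=1$ iff $I_m\setminus E$ is null is exactly the right way to organize this, and the bookkeeping issues you flag (truncation, $\Pi(\mathbb{R}^d)$ versus $\Pi(\mathcal{O})$) are indeed harmless for this half.

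The gap is in the reverse inequality, which you correctly identify as the delicate point but do not close. You propose to combine $\underline{\mu}(F)\le\underline{\mu}(E)$ with the identity $\overline{\mu}(F)=\underline{\mu}(F)$ for closed $F$, deferring the latter to the remark's forward reference. That identity is not available at this stage, and, more seriously, it does not follow from the definitions as given: let $F$ be a fat Cantor set (closed, nowhere dense, of positive Lebesgue measure). For every open interval $I$ the set $I\setminus F$ contains a nonempty open set and hence is not null, so $\essinf_{x\in I}\chi_F(x)=0$ for every $I$; therefore every lower sum vanishes and $\underline{\mu}(F)=0$, whereas $\overline{\mu}(F)>0$. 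Consequently your chain $\overline{\mu}(F)=\underline{\mu}(F)\le\underline{\mu}(E)$ breaks at its first link, and taking $E$ itself to be such a set (so that $F=E$ is an admissible competitor in the supremum) shows the reverse inequality cannot be recovered by any argument of this shape. Note that the paper's proof does not fare better here: its concluding ``$\le\underline{\mu}(E)$'' only bounds the finite lower sums $\sum_{k\in K}\alpha_{m(k)}|I_{m(k)}|$, not $\overline{\mu}(F)$ for an arbitrary closed $F\subset E$ a.e., so the ``$\le$'' half of the asserted equality is left unproved there as well. A complete treatment would have to either restrict the class of competitors $F$ (e.g.\ to finite unions of closed intervals, as in Corollary \ref{inner measure2}) or revise the definition of the lower integral so that nowhere dense sets of positive measure are not assigned inner measure zero.
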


\begin{cor}\label{inner measure2}
For any $E \subset \mathbb{R}^d$, 
\[
\underline{\mu}(E)
= \sup
\left \{
\sum_{n=1}^N |I_n|: \bigsqcup_{n=1}^N \overline{I_n}\subset E \text{ a.e.}, I_n \in \mathscr{I}(\mathbb{R}^d)
\right \}. 
\]
\end{cor}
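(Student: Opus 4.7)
The statement is essentially a repackaging of Proposition \ref{inner measure1}, using that a finite disjoint union of closed intervals $\bigsqcup_{n=1}^N \overline{I_n}$ differs from the open union $\bigsqcup_{n=1}^N I_n$ only by the null set $\bigcup_{n=1}^N \partial I_n$. I would prove the two inequalities separately.

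\emph{Upper bound for the supremum} $(\sup \le \underline{\mu}(E))$. Fix any finite disjoint family $\{I_n\}_{n=1}^N \subset \mathscr{I}(\mathbb{R}^d)$ with $\bigsqcup_{n=1}^N \overline{I_n} \subset E$ a.e. The complement $\mathcal{O}' := \mathbb{R}^d \setminus \bigsqcup_{n=1}^N \overline{I_n}$ is open, so by the Open set decomposition lemma there is a countable family $\{J_k\} \subset \mathscr{I}(\mathbb{R}^d)$ with $\mathcal{O}' = \bigsqcup_k J_k$ a.e. Since $\partial\bigl(\bigsqcup_{n=1}^N \overline{I_n}\bigr) \subset \bigcup_{n=1}^N \partial I_n$ is a null set, the concatenated family $\{I_1,\dots,I_N\}\cup\{J_k\}_k$ lies in $\Pi(\mathbb{R}^d)$. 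Because $\overline{I_n}\subset E$ a.e., we have $\chi_E = 1$ a.e.\ on $I_n$, hence $\alpha_n:=\essinf_{x\in I_n}\chi_E(x)=1$ for $n=1,\dots,N$, while the remaining essential infima are nonnegative. Evaluating the defining sum of $\underline{\mu}(E)$ on this partition,
\[
\sum_{n=1}^N |I_n| \;\le\; \sum_{n=1}^N \alpha_n|I_n| + \sum_k \bigl(\essinf_{x\in J_k}\chi_E(x)\bigr)|J_k| \;\le\; \underline{\mu}(E).
\]
Taking the supremum over admissible finite families yields one inequality.

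\emph{Lower bound for the supremum} $(\sup \ge \underline{\mu}(E))$. This is already embedded in the proof of Proposition \ref{inner measure1}: for each $\varepsilon>0$ one produces a finite index set $K$ and a disjoint finite family $\{I_{m(k)}\}_{k\in K}$ of open intervals with $\bigsqcup_{k\in K}\overline{I_{m(k)}} \subset E$ a.e.\ and
\[
\underline{\mu}(E)-\varepsilon \;<\; \sum_{k\in K}|I_{m(k)}|.
\]
Thus the supremum on the right-hand side of the corollary is at least $\underline{\mu}(E)-\varepsilon$, and letting $\varepsilon\to 0$ completes the argument.

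\textbf{Main obstacle.} The only delicate point is the extension of a given finite disjoint family $\{I_1,\dots,I_N\}$ to a full element of $\Pi(\mathbb{R}^d)$: one must observe that $\bigcup_{n=1}^N \partial I_n$ is a null set so that the concatenation with the open set decomposition of $\mathbb{R}^d \setminus \bigsqcup_{n=1}^N \overline{I_n}$ is an honest a.e.\ partition of $\mathbb{R}^d$. Everything else is a direct unpacking of Proposition \ref{inner measure1}.
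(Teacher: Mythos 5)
The paper states this corollary without proof, intending it to be read off from the proof of Proposition \ref{inner measure1} exactly as you do; your two-inequality argument is correct, and the upper bound $\sup\le\underline{\mu}(E)$ (extending $\{I_1,\dots,I_N\}$ by an open-set decomposition of the complement to a member of $\Pi(\mathbb{R}^d)$ on which $\essinf\chi_E=1$ over each $I_n$) is precisely the part that needs to be supplied. One small wrinkle in the lower bound: the finite family $\{I_{m(k)}\}_{k\in K}$ extracted in the proposition's proof consists of pairwise disjoint \emph{open} intervals, but their closures may share boundary faces, so $\bigsqcup_{k\in K}\overline{I_{m(k)}}$ need not literally be a disjoint union as the corollary's right-hand side requires; this is repaired by shrinking each $I_{m(k)}$ to a concentric subinterval whose closure lies inside $I_{m(k)}$, at the cost of an additional $\varepsilon$ in the sum, after which your argument goes through unchanged.
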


Although the integral was defined on open sets, 
we can also define it on finite-measure sets.

\begin{defn}[Integrals on finite-measure sets]
For $E \in \mathcal{M}(\mathbb{R}^d)$ and 
$f\in \mathcal{M}^1(\mathbb{R}^d)$, set 
\[ 
\int_{E}f(x)\,d_{\mathrm{M}}x
:=\int_{\mathbb{R}^d} f(x)\chi_{E}(x)\,d_{\mathrm{M}}x. 
\]
\end{defn}

\section{Essential Riemann Criterion for $\mathcal{M}^1$-Integrability
}

The following theorem gives an essential-oscillation (Riemann-type) criterion that characterizes $\mathcal{M}^1$-integrability.

\begin{thm}[Essential Riemann criterion for $\mathcal{M}^1$-integrability]
\label{Riemann conditions}
Let $\mathcal{O}$ be a open set in $\mathbb{R}^d$ and 
$f:\mathcal{O} \to \mathbb{R}$ with $f \geq 0$ a.e. on $\mathcal{O}$. 
The following are equivalent.
\begin{itemize}

\item[\rm{(i)}] $f \in \mathcal{M}^1(\mathcal{O})$.

\item[\rm{(ii)}] 
The upper integral of $f$ is finite, 
and for every $\varepsilon>0$ 
there exists a partition $\{I_m\} \in \Pi(\mathcal{O})$ 
such that 
\[
\sum_{m=1}^{\infty}(\beta_m-\alpha_m)|I_m|<\varepsilon, 
\]
where 
$\alpha_m=\essinf\limits_{x \in I_m}f(x)$ and $\beta_m=\esssup\limits_{x \in I_m}f(x)$.

\end{itemize}
\end{thm}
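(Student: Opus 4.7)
The plan is to adapt the classical Darboux argument to our setting, with the key technical ingredient being a common-refinement construction that lives in $\Pi(\mathcal{O})$. Given $P=\{I_m\}$ and $P'=\{J_n\}$ in $\Pi(\mathcal{O})$, I would consider $Q:=\{I_m\cap J_n\}_{m,n}$, reindexed as a sequence after discarding empties. Each $I_m\cap J_n$ is again an open $d$-dimensional interval, and intersecting the a.e.\ identity $\mathcal{O}=\bigsqcup_n J_n$ with $I_m$ gives $I_m=\bigsqcup_n (I_m\cap J_n)$ a.e., whence $\mathcal{O}=\bigsqcup_{m,n}(I_m\cap J_n)$ a.e., i.e.\ $Q\in\Pi(\mathcal{O})$. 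Applying Proposition 2.1 to each $I_m$ viewed as an open set partitioned by $\{I_m\cap J_n\}_n$ yields countable additivity $|I_m|=\sum_n|I_m\cap J_n|$.

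Monotonicity of essential infima and suprema under containment then gives $\essinf_{I_m\cap J_n}f\ge \alpha_m$ and $\esssup_{I_m\cap J_n}f\le \beta_n':=\esssup_{J_n}f$. Together with the countable additivity just noted, this yields
\[
L(f,P)\le L(f,Q)\le U(f,Q)\le U(f,P')
\]
for \emph{any} pair $P,P'\in\Pi(\mathcal{O})$, and in particular the preliminary inequality $\underline{\int}_{\mathcal{O}}f\le \overline{\int}_{\mathcal{O}}f$ in full generality.

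With this common-refinement machinery in hand, both directions become short. For (i)$\Rightarrow$(ii): set $A:=\int_{\mathcal{O}}f\,d_{\mathrm M}x<\infty$, pick $P,P'\in\Pi(\mathcal{O})$ with $L(f,P)>A-\varepsilon/2$ and $U(f,P')<A+\varepsilon/2$, pass to their refinement $Q$, and read off $U(f,Q)-L(f,Q)<\varepsilon$, which is precisely the claim for the partition $Q$. For (ii)$\Rightarrow$(i): a partition $P$ with $U(f,P)-L(f,P)<\varepsilon$ sandwiches $L(f,P)\le \underline{\int}_{\mathcal{O}}f\le \overline{\int}_{\mathcal{O}}f\le U(f,P)$, so $\overline{\int}_{\mathcal{O}}f-\underline{\int}_{\mathcal{O}}f<\varepsilon$; finiteness of the upper integral propagates to both, and letting $\varepsilon\to 0$ gives equality, i.e.\ $f\in\mathcal{M}^1(\mathcal{O})$. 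The main obstacle is the common-refinement step itself, since $\Pi(\mathcal{O})$ only partitions $\mathcal{O}$ in an a.e.\ sense and one must verify that intersecting two such partitions preserves both the open-interval structure and the total measure; once Proposition 2.1 is invoked to supply countable additivity of $|\cdot|$ across the refinement, the rest is bookkeeping.
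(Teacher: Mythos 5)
Your proposal is correct and follows essentially the same route as the paper: both directions rest on passing to the common refinement $\{I_m\cap J_n\}_{m,n}$, using monotonicity of $L$ and $U$ under refinement for (i)$\Rightarrow$(ii), and the sandwich $L(f,P)\le \underline{\int}_{\mathcal{O}}f\le \overline{\int}_{\mathcal{O}}f\le U(f,P)$ for (ii)$\Rightarrow$(i). You merely make explicit two points the paper leaves implicit, namely that the refinement lies in $\Pi(\mathcal{O})$ and that $|I_m|=\sum_n|I_m\cap J_n|$.
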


\begin{proof}
\noindent [\,(i)$\implies$(ii)\,]
Assume $f \in \mathcal{M}^1(\mathcal{O})$ and fix $\varepsilon>0$. 
By definition there exist $\{J^1_m\}$, $\{J^2_m\} \in \Pi(\mathcal{O})$ such that 
\[
0 \leq 
\int_{\mathcal{O}}f\,d_{\mathrm{M}}
-L(f,\{J^1_m\})<\frac{\varepsilon}{2}, 
\quad 
0 \leq 
U(f,\{J^2_m\})
-
\int_{\mathcal{O}}f\,d_{\mathrm{M}}<\frac{\varepsilon}{2}. 
\]
The common refinement 
$\{J^1_m \cap J^2_n \}_{m,n}$ lies in $\Pi(\mathcal{O})$, 
and by monotonicity under refinement and the defining inequalities,
\[
L(f,\{J^1_m\})
\leq 
L(f,\{J^1_m\cap J^2_n\}) 
\leq \int_{\mathcal{O}}f\,d_{\mathrm{M}}
\leq 
U(f,\{J^1_m\cap J^2_n\}) 
\leq 
U(f,\{J^2_n\}). 
\]
Hence 
\[ 
U(f,\{J^1_m\cap J^2_n\}) -
L(f,\{J^1_m\cap J^2_n\})<\varepsilon, \] 
which is exactly (ii).
\\[1ex]
\noindent [\,(ii)$\implies$(i)\,]
Given $\varepsilon>0$, 
condition (ii) and the definitions of the upper and lower integrals imply 
\[
\varepsilon>
\sum_{m=1}^{\infty} (\beta_m-\alpha_m)|I_m|
\geq 
\overline{\int_{\mathcal{O}}}f(x)\,d_{\mathrm{M}}x
-
\underline{\int_{\mathcal{O}}}f(x)\,d_{\mathrm{M}}x. 
\]
Since $\varepsilon>0$ is arbitrary, we obtain 
\[ \displaystyle 
\underline{\int_{\mathcal{O}}}f(x)\,d_{\mathrm{M}}x
= \overline{\int_{\mathcal{O}}}f(x)\,d_{\mathrm{M}}x<\infty
\] 
So $f \in \mathcal{M}^1(\mathcal{O})$. 
\end{proof}

\begin{cor}\label{monotone approach}
If $f:\mathcal{O} \to \mathbb{R}$ with 
$f \geq 0$ a.e. and $f \in \mathcal{M}^1(\mathcal{O})$, 
then there exist partitions $\{I^n_m\}\in \Pi(\mathcal{O})$ 
{\rm(}for each $n \in \mathbb{N}${\rm)} such that, as $n \to \infty$, 
\[
L(f,\{I^n_m\})
\uparrow \int_{\mathcal{O}}f\,d_{\mathrm{M}}, \quad 
U(f,\{I^n_m\})
\downarrow 
\int_{\mathcal{O}}f\,d_{\mathrm{M}}. 
\]
\end{cor}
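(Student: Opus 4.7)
The plan is to iterate the essential Riemann criterion of Theorem \ref{Riemann conditions} and then pass to common refinements to secure monotonicity of $L$ and $U$.

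First, for each $n \in \mathbb{N}$ I apply criterion (ii) of Theorem \ref{Riemann conditions} with $\varepsilon = 1/n$ to extract a partition $\{P^n_m\}_m \in \Pi(\mathcal{O})$ satisfying
\[
U(f,\{P^n_m\}) - L(f,\{P^n_m\}) < \frac{1}{n}.
\]
These partitions need not be nested, so the lower sums need not be increasing. I force monotonicity by taking iterated common refinements: for each $n$, set
\[
\{I^n_{\vec m}\} := \bigl\{ P^1_{m_1} \cap P^2_{m_2} \cap \cdots \cap P^n_{m_n} : \vec m = (m_1,\ldots,m_n) \in \mathbb{N}^n \bigr\}.
\]
Since intersections of open $d$-dimensional intervals are again open $d$-dimensional intervals (possibly empty, which I discard), this is a countable family of pairwise disjoint open intervals; by distributivity of intersection over disjoint union it covers $\mathcal{O}$ a.e., so $\{I^n_{\vec m}\} \in \Pi(\mathcal{O})$. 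By construction, $\{I^{n+1}\}$ refines $\{I^n\}$, and $\{I^n\}$ refines each $\{P^k\}$ for $k \leq n$.

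Next I invoke refinement monotonicity, exactly as used inside the proof of Theorem \ref{Riemann conditions}: the essential infimum over a smaller interval is at least as large, and the essential supremum at most as large, so $L$ increases and $U$ decreases under refinement. Consequently
\[
L(f,\{I^n\}) \leq L(f,\{I^{n+1}\}) \leq \int_{\mathcal{O}} f\,d_{\mathrm{M}} \leq U(f,\{I^{n+1}\}) \leq U(f,\{I^n\}),
\]
so the lower (resp.\ upper) sums form an increasing (resp.\ decreasing) sequence bracketing the integral. Because $\{I^n\}$ also refines $\{P^n\}$, the same monotonicity yields
\[
0 \leq U(f,\{I^n\}) - L(f,\{I^n\}) \leq U(f,\{P^n\}) - L(f,\{P^n\}) < \frac{1}{n},
\]
which squeezes both sequences onto $\int_{\mathcal{O}} f\,d_{\mathrm{M}}$.

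The only step demanding a small amount of care is the verification that the common refinement remains in $\Pi(\mathcal{O})$: pairwise disjointness is inherited from each $P^k$ being a partition, and the almost-everywhere covering property follows because
\[
\mathcal{O} \,\triangle\, \bigsqcup_{\vec m} I^n_{\vec m} \subset \bigcup_{k=1}^n \Bigl( \mathcal{O} \,\triangle\, \bigsqcup_m P^k_m \Bigr)
\]
is a finite union of null sets. Beyond this routine bookkeeping, the argument is a direct sandwich from the essential Riemann criterion.
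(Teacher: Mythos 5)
Your proof is correct and follows the argument the paper intends: the corollary is stated without proof as an immediate consequence of Theorem \ref{Riemann conditions}, obtained by applying criterion (ii) with $\varepsilon=1/n$ and passing to iterated common refinements, exactly as you do. Your verification that the $n$-fold refinement stays in $\Pi(\mathcal{O})$ and the appeal to refinement monotonicity (the same fact the paper uses inside the proof of Theorem \ref{Riemann conditions}) are both sound.
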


\begin{rem}[Notation for monotone convergence]
In this paper, for sequences of real numbers or sets, the symbol 
\lq \lq \ $\uparrow$\rq \rq \ is used as follows.
\begin{itemize}
\item[{\rm(i)}] $a_n \uparrow a$ $\iff$ $a_n \le a_{n+1}$ for all $n\in\mathbb{N}$ and $\lim\limits_{n\to\infty} a_n = a$.
\item[{\rm(ii)}] $A_n \uparrow A$ $\iff$ $A_n \subset A_{n+1}$ for all $n\in\mathbb{N}$ and $A=\bigcup\limits_{n=1}^\infty A_n$.
\end{itemize}
The symbol \lq \lq \ $\downarrow$\rq \rq \ is defined analogously.
\end{rem}

\section{Properties of the Integral for Nonnegative Functions}

In this section 
we establish the basic properties of the integral: 
linearity and monotonicity.

\begin{lem}[Homogeneity]
If $f \in \mathcal{M}^1(\mathbb{R}^d)$ and 
$\alpha \geq 0$, then 
\[
\int_{\mathbb{R}^d} \alpha f(x)\,d_{\mathrm{M}}x
=\alpha \int_{\mathbb{R}^d} f(x)\,d_{\mathrm{M}}x. 
\]
\end{lem}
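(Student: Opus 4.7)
The plan is to reduce the claim to a partition-by-partition identity and then pass to $\sup$ and $\inf$ over $\Pi(\mathbb{R}^d)$. The case $\alpha=0$ is immediate: then $\alpha f\equiv 0$ a.e., so every $L(\alpha f,\{I_m\})$ and $U(\alpha f,\{I_m\})$ vanishes and both sides of the desired identity are $0$. Thus one may assume $\alpha>0$.

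The heart of the argument is the scalar pull-through identity for essential suprema and infima, namely
\[
\essinf_{x\in A}\alpha f(x)=\alpha\essinf_{x\in A}f(x),\qquad
\esssup_{x\in A}\alpha f(x)=\alpha\esssup_{x\in A}f(x)
\]
for every $A\subset\mathbb{R}^d$ and every $\alpha>0$. This is a direct consequence of Definition \ref{ess_inf+sup}: for $\alpha>0$ the set equality $\{\alpha f>M\}=\{f>M/\alpha\}$ (and analogously for sublevel sets) produces a bijection $M\mapsto M/\alpha$ between the admissible constants, rescaling the infimum (resp.\ supremum) by $\alpha$. Applying this with $A=I_m$ gives
\[
L(\alpha f,\{I_m\})=\alpha\,L(f,\{I_m\}),\qquad
U(\alpha f,\{I_m\})=\alpha\,U(f,\{I_m\})
\]
for every $\{I_m\}\in\Pi(\mathbb{R}^d)$.

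Now I would take $\sup$ of the first equality and $\inf$ of the second over $\Pi(\mathbb{R}^d)$, using that multiplication by the positive constant $\alpha$ is monotone increasing and therefore commutes with both $\sup$ and $\inf$. This yields
\[
\underline{\int_{\mathbb{R}^d}}\alpha f\,d_{\mathrm{M}}x=\alpha\underline{\int_{\mathbb{R}^d}}f\,d_{\mathrm{M}}x,\qquad
\overline{\int_{\mathbb{R}^d}}\alpha f\,d_{\mathrm{M}}x=\alpha\overline{\int_{\mathbb{R}^d}}f\,d_{\mathrm{M}}x.
\]
Because $f\in\mathcal{M}^1(\mathbb{R}^d)$ means that the upper and lower integrals of $f$ coincide and are finite, the same is true for $\alpha f$, so $\alpha f\in\mathcal{M}^1(\mathbb{R}^d)$ and the common value equals $\alpha\int f\,d_{\mathrm{M}}x$, as claimed.

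I do not anticipate a substantive obstacle: the only step that deserves some care is the scalar pull-through for $\essinf$ and $\esssup$, which should be recorded explicitly (perhaps as a one-line remark invoking Definition \ref{ess_inf+sup}) since the Mimura framework builds everything out of these essential bounds. If one prefers to avoid any verification at the level of level sets, an equivalent route is to note the a.e.\ inequalities $\alpha\alpha_m\le\alpha f\le\alpha\beta_m$ a.e.\ on $I_m$ and show by the usual squeeze that $\alpha\alpha_m$ and $\alpha\beta_m$ are the best such bounds; this is merely a rephrasing of the pull-through and requires no extra work.
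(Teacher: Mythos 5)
Your proposal is correct and follows essentially the same route as the paper: establish $L(\alpha f,\{I_m\})=\alpha L(f,\{I_m\})$ and $U(\alpha f,\{I_m\})=\alpha U(f,\{I_m\})$ partitionwise via the scalar pull-through for $\essinf$ and $\esssup$, then pass to $\sup$ and $\inf$ and use that the upper and lower integrals of $f$ coincide. Your explicit treatment of $\alpha=0$ and of the level-set bijection $M\mapsto M/\alpha$ merely fills in details the paper leaves implicit.
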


\begin{proof}
From 
$
\alpha L(f,\{I_m\})=
L(\alpha f, \{I_m\})
\leq 
U(\alpha f, \{I_m\})
=\alpha U(f,\{I_m\})
$, we obtain
\[
\alpha \underline{\int_{\mathbb{R}^d}}f(x)\,d_{\mathrm{M}}x 
=  \underline{\int_{\mathbb{R}^d}} \alpha f(x)\,d_{\mathrm{M}}x 
\leq \overline{\int_{\mathbb{R}^d}}\alpha f(x)\,d_{\mathrm{M}}x
= \alpha \overline{\int_{\mathbb{R}^d}}f(x)\,d_{\mathrm{M}}x
\]
If $f \in \mathcal{M}^1(\mathbb{R}^d)$, 
the lower and upper integrals coincide, giving the claim. 
\end{proof}

\begin{lem}[Additivity] 
If $f, g \in \mathcal{M}^1(\mathbb{R}^d)$, then
\[
\int_{\mathbb{R}^d} f(x) + g(x) \,d_{\mathrm{M}}x 
= \int_{\mathbb{R}^d} f(x)\,d_{\mathrm{M}}x
 + \int_{\mathbb{R}^d} g(x)\,d_{\mathrm{M}}x. 
\]

\end{lem}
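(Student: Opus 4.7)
The plan is to prove additivity by combining the Riemann-type criterion (Theorem \ref{Riemann conditions}) with the sub/super-additivity of $\esssup$ and $\essinf$ on each piece of a common refinement. The first step is to observe that for any open $d$-dimensional interval $I$, if $f\leq M_1$ a.e.\ on $I$ and $g\leq M_2$ a.e.\ on $I$, then $f+g\leq M_1+M_2$ a.e.\ on $I$, so
\[
\esssup_{x\in I}(f+g)(x) \leq \esssup_{x\in I}f(x) + \esssup_{x\in I}g(x),
\]
and, symmetrically, $\essinf_{x\in I}(f+g) \geq \essinf_{x\in I}f + \essinf_{x\in I}g$. Summing over any $\{I_m\}\in\Pi(\mathbb{R}^d)$ yields the pointwise-in-partition estimates $L(f+g,\{I_m\})\geq L(f,\{I_m\})+L(g,\{I_m\})$ and $U(f+g,\{I_m\})\leq U(f,\{I_m\})+U(g,\{I_m\})$.

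Next I would reduce to a single partition by taking a common refinement. Given $\varepsilon>0$, Theorem \ref{Riemann conditions} provides $\{I_m^f\}, \{I_m^g\}\in\Pi(\mathbb{R}^d)$ with $U(f,\{I_m^f\})-L(f,\{I_m^f\})<\varepsilon/2$ and similarly for $g$. Since each $I_m^f\cap I_n^g$ is an open $d$-dimensional interval and their almost-everywhere disjoint union covers $\mathbb{R}^d$ a.e., the common refinement $P:=\{I_m^f\cap I_n^g\}_{m,n}$ lies in $\Pi(\mathbb{R}^d)$. By monotonicity of $L$ and $U$ under refinement (which follows directly from the definitions of $\essinf$ and $\esssup$), the same oscillation bound $\varepsilon/2$ holds on $P$ for both $f$ and $g$.

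Combining the two steps, on this refinement we get the chain
\[
L(f,P)+L(g,P) \;\leq\; L(f+g,P) \;\leq\; U(f+g,P) \;\leq\; U(f,P)+U(g,P),
\]
whose outer terms differ by at most $\varepsilon$. Hence $U(f+g,P)-L(f+g,P)<\varepsilon$, so Theorem \ref{Riemann conditions} gives $f+g\in\mathcal{M}^1(\mathbb{R}^d)$. Moreover both the lower and upper sums of $f+g$ on $P$ lie within $\varepsilon$ of $\int f\,d_{\mathrm{M}}+\int g\,d_{\mathrm{M}}$, so the common value of the two integrals must equal $\int f\,d_{\mathrm{M}}+\int g\,d_{\mathrm{M}}$; letting $\varepsilon\downarrow 0$ finishes the proof.

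The only real subtlety I anticipate is verifying that the common refinement $\{I_m^f\cap I_n^g\}$ is genuinely admissible: one must check that the a.e.\ disjointness and a.e.\ covering conditions defining $\Pi(\mathbb{R}^d)$ survive the intersection, which amounts to a standard null-set manipulation (a countable union of null sets is null). Beyond that, the argument is a transparent adaptation of the classical Riemann proof, with essential sup/inf replacing ordinary sup/inf.
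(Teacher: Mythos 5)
Your proof is correct and rests on the same key inequality as the paper's, namely the sub/super-additivity of $\esssup$ and $\essinf$ over each piece of a partition, yielding $L(f,P)+L(g,P)\le L(f+g,P)\le U(f+g,P)\le U(f,P)+U(g,P)$. The paper passes from this directly to $\underline{\int}f+\underline{\int}g\le\underline{\int}(f+g)\le\overline{\int}(f+g)\le\overline{\int}f+\overline{\int}g$ and squeezes, while you make explicit the common-refinement step (which the paper leaves implicit) and route the conclusion through Theorem \ref{Riemann conditions}; the mathematical content is the same.
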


\begin{proof}
For any $A \subset \mathbb{R}^d$, 
\begin{equation*} 
\begin{split} 
\essinf_{x \in A}f(x)+\essinf_{x \in A}g(x)
&\leq \essinf_{x \in A}(f+g)(x) \\
&\leq \esssup_{x \in A}(f+g)(x) 
\leq \esssup_{x \in A}f(x)+\esssup_{x \in A}g(x). 
\end{split}
\end{equation*}
%Taking supremum and infimum over partitions yields 
Consequently, by definition of upper and lower integrals, we have 
\begin{equation*} 
\begin{split} 
\underline{\int_{{\mathbb{R}^d}}}f\,d_{\mathrm{M}}x 
+ \underline{\int_{{\mathbb{R}^d}}}g\,d_{\mathrm{M}}x 
& \leq \underline{\int_{{\mathbb{R}^d}}}(f+g)\,d_{\mathrm{M}}x \\
&\leq \overline{\int_{{\mathbb{R}^d}}}(f+g)\,d_{\mathrm{M}}x
\leq \overline{\int_{{\mathbb{R}^d}}}f\,d_{\mathrm{M}}x
+\overline{\int_{{\mathbb{R}^d}}}g\,d_{\mathrm{M}}x. 
\end{split}
\end{equation*}
Since $f, g \in \mathcal{M}^1(\mathbb{R}^d)$, 
the middle quantities coincide, proving the identity.
\end{proof}

\begin{lem}[Nonnegativity]
If $f \in \mathcal{M}^1(\mathbb{R}^d)$ and $f \geq 0$ a.e., 
then  
\[
\int_{\mathbb{R}^d} f(x) \,d_{\mathrm{M}}x \geq 0. 
\]
\end{lem}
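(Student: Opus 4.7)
The plan is to unpack the definitions and observe that each approximating lower sum is automatically nonnegative. First, I would fix an arbitrary partition $\{I_m\} \in \Pi(\mathbb{R}^d)$ and look at $\alpha_m = \essinf_{x \in I_m} f(x)$. Since $f \geq 0$ a.e.\ on $\mathbb{R}^d$, the set $\{f<0\} \cap I_m$ is contained in a Lebesgue null set, hence is itself a Lebesgue null set. By Definition~\ref{ess_inf+sup}, this means $0$ lies in the set $\{m \in \mathbb{R} : \{f<m\} \cap I_m \in \mathcal{N}(\mathbb{R}^d)\}$, so $\alpha_m \geq 0$.

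Next, I would sum: $L(f, \{I_m\}) = \sum_{m=1}^\infty \alpha_m |I_m| \geq 0$ because it is a sum of nonnegative terms. Taking the supremum over all $\{I_m\} \in \Pi(\mathbb{R}^d)$ gives $\underline{\int_{\mathbb{R}^d}} f(x)\,d_{\mathrm{M}} x \geq 0$.

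Finally, since $f \in \mathcal{M}^1(\mathbb{R}^d)$, the lower integral equals the Mimura integral, so
\[
\int_{\mathbb{R}^d} f(x)\,d_{\mathrm{M}} x = \underline{\int_{\mathbb{R}^d}} f(x)\,d_{\mathrm{M}} x \geq 0.
\]

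There is essentially no obstacle here: the only substantive point is the elementary observation that $f \geq 0$ a.e.\ forces $\essinf \geq 0$ on every subinterval, which is immediate from the definition of essential infimum in terms of null sets. Everything else is monotonicity of sums and suprema.
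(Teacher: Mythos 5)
Your proof is correct and follows the same route as the paper, which simply notes that $L(f,\{I_m\})\geq 0$ and concludes; you have merely filled in the (correct) detail that $f\geq 0$ a.e.\ forces $\alpha_m=\essinf_{x\in I_m}f(x)\geq 0$ via the null-set definition of the essential infimum. No gaps.
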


\begin{proof}
Clearly $L(f, \{I_m\})\geq 0$, 
hence the integral 
is nonnegative.
\end{proof}

\begin{cor}[Monotonicity]
If $f, g \in \mathcal{M}^1({\mathbb{R}^d})$ and 
$f \leq g$ a.e., then 
\[ 
\int_{\mathbb{R}^d} f(x)\,d_{\mathrm{M}}x 
\leq \int_{\mathbb{R}^d} g(x)\,d_{\mathrm{M}}x. 
\] 
\end{cor}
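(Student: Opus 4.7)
The plan is to bypass any subtraction or appeal to the integrability of $g-f$ and instead compare the lower integrals of $f$ and $g$ directly on a common partition. This keeps the argument within the class $\mathcal{M}^1(\mathbb{R}^d)$ of nonnegative functions that has been set up, and uses only definitions already available.

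First I would fix an arbitrary partition $\{I_m\}\in\Pi(\mathbb{R}^d)$ and set
\[
\alpha_m(h):=\essinf_{x\in I_m}h(x)
\]
for $h=f,g$. The key observation is that $f\le g$ a.e.\ implies $\alpha_m(f)\le\alpha_m(g)$ for every $m$. To verify this I would use the inf-form definition of the essential infimum: if $\{g<c\}\cap I_m$ is Lebesgue null, then
\[
\{f<c\}\cap I_m\subset\bigl(\{g<c\}\cap I_m\bigr)\cup\{f>g\},
\]
which is a union of two null sets, hence null; therefore every $c$ admissible in the defining set for $\alpha_m(g)$ is also admissible for $\alpha_m(f)$, and taking the supremum over such $c$ yields $\alpha_m(f)\le\alpha_m(g)$.

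Summing against $|I_m|$ gives $L(f,\{I_m\})\le L(g,\{I_m\})$, and since this holds for every $\{I_m\}\in\Pi(\mathbb{R}^d)$, passing to the supremum produces
\[
\underline{\int_{\mathbb{R}^d}}f\,d_{\mathrm{M}}x \le \underline{\int_{\mathbb{R}^d}}g\,d_{\mathrm{M}}x.
\]
Because $f,g\in\mathcal{M}^1(\mathbb{R}^d)$, each lower integral coincides with the corresponding Mimura integral, which concludes the proof.

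The only non-routine step is the monotonicity of the essential infimum under the a.e.\ inequality; everything else is definitional. An alternative route would write $g=f+(g-f)$ and invoke additivity together with nonnegativity, but that requires first establishing $g-f\in\mathcal{M}^1(\mathbb{R}^d)$ (e.g.\ via the essential Riemann criterion of Theorem~\ref{Riemann conditions}), which is heavier machinery than the direct lower-sum comparison needs.
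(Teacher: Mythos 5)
Your overall strategy---comparing lower sums termwise on each partition and using that the lower integral equals the integral for functions in $\mathcal{M}^1(\mathbb{R}^d)$---is sound, and it is a genuinely different (and more self-contained) route than the paper's, which states monotonicity as a corollary of additivity and nonnegativity via $g=f+(g-f)$ and therefore tacitly needs $g-f\in\mathcal{M}^1(\mathbb{R}^d)$. However, your key step is stated backwards, and as written it proves the wrong inequality. The inclusion $\{f<c\}\cap I_m\subset(\{g<c\}\cap I_m)\cup\{f>g\}$ is false: if $f(x)=0$, $g(x)=5$, $c=1$, then $x$ lies in the left-hand set but in neither set on the right. Moreover, even granting your implication ``$c$ admissible for $\alpha_m(g)$ $\Rightarrow$ $c$ admissible for $\alpha_m(f)$,'' the containment of admissible sets it yields goes the wrong way: it would give $\alpha_m(f)=\sup\{c:\dots\}\ge\alpha_m(g)$, not $\le$.

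The correct version of the step runs in the opposite direction. Suppose $\{f<c\}\cap I_m\in\mathcal{N}(\mathbb{R}^d)$. Then
\[
\{g<c\}\cap I_m\ \subset\ \bigl(\{f<c\}\cap I_m\bigr)\cup\{f>g\},
\]
since for $x\in I_m$ with $g(x)<c$ either $f(x)\le g(x)<c$ or $f(x)>g(x)$. The right-hand side is a union of two null sets (the second by the hypothesis $f\le g$ a.e.), so $\{g<c\}\cap I_m$ is null; hence every $c$ admissible for $\alpha_m(f)$ is admissible for $\alpha_m(g)$, and taking suprema gives $\alpha_m(f)\le\alpha_m(g)$. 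With this repair the rest of your argument---$L(f,\{I_m\})\le L(g,\{I_m\})$ for every partition, hence $\underline{\int}f\,d_{\mathrm{M}}\le\underline{\int}g\,d_{\mathrm{M}}$, hence the stated inequality---goes through, and it has the advantage over the paper's implicit route of never needing the integrability of $g-f$.
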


\begin{cor}[Domain monotonicity]
Let $E, F \in \mathcal{M}(\mathbb{R}^d)$ and 
$f \in \mathcal{M}^1(\mathbb{R}^d)$ with $f \geq 0$. 
If $E \subset F$ a.e., then  
\[
\int_E f(x)\,d_{\mathrm{M}}x \leq \int_F f(x)\,d_{\mathrm{M}}x. 
\]
\end{cor}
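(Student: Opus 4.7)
The plan is to reduce this statement about domains to the preceding corollary (Monotonicity) about functions, using the definition of the integral over a finite-measure set. By that definition, $\int_E f\,d_{\mathrm{M}}x=\int_{\mathbb{R}^d} f\chi_E\,d_{\mathrm{M}}x$ and likewise $\int_F f\,d_{\mathrm{M}}x=\int_{\mathbb{R}^d} f\chi_F\,d_{\mathrm{M}}x$, so it suffices to compare the integrands $f\chi_E$ and $f\chi_F$ on $\mathbb{R}^d$.

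The hypothesis $E\subset F$ a.e. means $E\setminus F\in\mathcal{N}(\mathbb{R}^d)$. Outside this null set we have $\chi_E(x)\le\chi_F(x)$ pointwise, so $\chi_E\le\chi_F$ a.e. Since $f\ge 0$ (a.e.), multiplying through by $f$ preserves the inequality and gives $f\chi_E\le f\chi_F$ a.e. Invoking the Monotonicity corollary to the pair $f\chi_E,f\chi_F$ then yields the claimed inequality $\int_E f\,d_{\mathrm{M}}x\le\int_F f\,d_{\mathrm{M}}x$.

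The only genuine point to verify is that $f\chi_E$ and $f\chi_F$ actually belong to $\mathcal{M}^1(\mathbb{R}^d)$ so that the Monotonicity corollary applies; this is implicit in the Definition of integrals on finite-measure sets (which presupposes that the expression $\int_{\mathbb{R}^d} f\chi_E\,d_{\mathrm{M}}x$ is well defined for $E\in\mathcal{M}(\mathbb{R}^d)$ and $f\in\mathcal{M}^1(\mathbb{R}^d)$), so no additional argument is needed at this stage. Accordingly, I expect no real obstacle: this is a short corollary whose proof amounts to unfolding the definition and applying the pointwise monotonicity already established.
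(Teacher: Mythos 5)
Your proof is correct and follows exactly the route the paper intends (the corollary is stated without proof there): unfold the definition $\int_E f = \int_{\mathbb{R}^d} f\chi_E$, observe $f\chi_E \le f\chi_F$ a.e.\ from $E\subset F$ a.e.\ and $f\ge 0$, and apply the preceding Monotonicity corollary. The integrability of $f\chi_E$ that you flag is indeed left implicit by the paper's own Definition of integrals on finite-measure sets, so deferring to it is consistent with the text.
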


\section{Measurable Sets $\mathcal{M}^*(\mathbb{R}^d)$}

A set $E$ has finite measure precisely 
when the upper and lower integrals of its characteristic function coincide. 
Under the hypothesis $\overline{\mu}(E)<\infty$,  
this is equivalent to the Carath\'odory condition:
\begin{quote}
For every $A \subset \mathbb{R}^d$,\quad 
$
\overline{\mu}(A)
= \overline{\mu}(A \cap E) + \overline{\mu}(A \cap E^c)
$. 
\end{quote}
In this section we prove this equivalence, 
and then use the condition to define 
\lq \lq measurable sets\rq \rq even when the measure is not finite.

\begin{lem}[Representation of inner measure by outer measure]
\label{representation of inner measure by outer measure}
Let $E \subset \mathcal{O}$ with $\mu(\mathcal{O})<\infty$. 
Then
\[ \underline{\mu}(E)=\mu(\mathcal{O})-\overline{\mu}(\mathcal{O}\cap E^c). \] 
\end{lem}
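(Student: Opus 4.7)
The plan is to express both $\underline{\mu}(E)$ and $\overline{\mu}(\mathcal{O}\cap E^c)$ as $\sup$/$\inf$ over the common partition class $\Pi(\mathcal{O})$, where a one-line algebraic identity will link them. Writing $F := \mathcal{O}\cap E^c$, I observe that $\chi_E + \chi_F = 1$ a.e.\ on $\mathcal{O}$, hence on every interval $I_m$ of any $P = \{I_m\} \in \Pi(\mathcal{O})$; this yields $\essinf_{I_m}\chi_E = 1 - \esssup_{I_m}\chi_F$. Summing against $|I_m|$ and using $\sum_m|I_m| = \mu(\mathcal{O})$ from the previous proposition on open-set measure, I obtain the partition-level identity
\[
L(\chi_E, P) + U(\chi_F, P) = \mu(\mathcal{O}).
\]
Taking the $\sup$ of the first term (and the correlated $\inf$ of the second) over $P \in \Pi(\mathcal{O})$, the theorem reduces to the two auxiliary identifications
\[
\underline{\mu}(E) = \sup_{P \in \Pi(\mathcal{O})} L(\chi_E, P), \qquad \overline{\mu}(F) = \inf_{P \in \Pi(\mathcal{O})} U(\chi_F, P),
\]
i.e., for subsets of $\mathcal{O}$, the inner/outer measures (originally defined via $\Pi(\mathbb{R}^d)$) can equivalently be computed over $\Pi(\mathcal{O})$.

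For the outer-measure identification I would invoke Corollary \ref{outer measure2}. One direction is immediate: the intervals of any $P \in \Pi(\mathcal{O})$ that meet $F$ form an a.e.\ cover, so $U(\chi_F, P) \geq \overline{\mu}(F)$. For the reverse, starting from a near-optimal cover $F \subset \bigsqcup_n J_n$ a.e., I would write $\mathcal{O} = (\mathcal{O}\cap\bigsqcup_n J_n) \sqcup (\mathcal{O}\setminus\overline{\bigsqcup_n J_n})$ a.e.\ (the removed set $\bigcup_n \partial J_n$ is null) and decompose each open piece into open intervals via the open set decomposition lemma; intervals outside $\bigsqcup_n J_n$ miss $F$ a.e., so the resulting $U$-sum is bounded by $\sum_n|J_n|$.

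For the inner-measure identification I would argue in parallel. Given $P \in \Pi(\mathcal{O})$ and a finite sub-family $\{I_m\}_{m \in K_0}$ of intervals with $I_m \subset E$ a.e., I extend to a partition of $\mathbb{R}^d$ by adjoining an open-interval decomposition of the open set $\mathbb{R}^d \setminus \bigsqcup_{m\in K_0}\overline{I_m}$ (its boundary $\bigcup_{m\in K_0}\partial I_m$ is null because $K_0$ is finite). The extended partition contributes at least $\sum_{m \in K_0}|I_m|$ to the lower sum of $\chi_E$, so $\underline{\mu}(E) \geq \sum_{m \in K_0}|I_m|$; letting $K_0 \uparrow \{m : I_m \subset E \text{ a.e.}\}$ gives $\underline{\mu}(E) \geq L(\chi_E, P)$. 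Conversely, any $P' = \{J_k\} \in \Pi(\mathbb{R}^d)$ refines to some $P \in \Pi(\mathcal{O})$ by partitioning each open set $J_k \cap \mathcal{O}$; when $J_k \subset E$ a.e., we have $J_k \cap \mathcal{O} = J_k$ a.e., so no lower-sum mass is lost and $L(\chi_E, P) \geq L(\chi_E, P')$.

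The chief obstacle is the a.e.\ bookkeeping: every extension, refinement, or complement operation must be verified to leave essinf/esssup values and interval-volume sums unchanged. This is controlled throughout by the elementary fact that a countable union of interval boundaries is Lebesgue null, so all the set-algebraic manipulations above are valid modulo null sets, after which the identity closes immediately by the sup/inf argument above.
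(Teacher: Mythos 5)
Your core argument is exactly the paper's: from $\chi_E+\chi_{\mathcal{O}\cap E^c}=1$ on $\mathcal{O}$ one gets $\essinf_{I_m}\chi_E=1-\esssup_{I_m}\chi_{\mathcal{O}\cap E^c}$, hence $L(\chi_E,P)+U(\chi_{\mathcal{O}\cap E^c},P)=\mu(\mathcal{O})$ for every $P\in\Pi(\mathcal{O})$, and then one passes to $\sup$/$\inf$ over $P$. You go beyond the paper in making explicit the step its proof silently skips, namely that $\underline{\mu}(E)$ and $\overline{\mu}(\mathcal{O}\cap E^c)$ are defined through partitions of $\mathbb{R}^d$, so they must first be identified with $\sup_{P\in\Pi(\mathcal{O})}L(\chi_E,P)$ and $\inf_{P\in\Pi(\mathcal{O})}U(\chi_{\mathcal{O}\cap E^c},P)$; flagging and proving this is a legitimate strengthening, and your inner-measure half is sound (finitely many closed intervals have null boundary, so the finite-subfamily extension works, and the restriction of a partition of $\mathbb{R}^d$ to $\mathcal{O}$ loses no lower-sum mass).

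The genuine gap is in the reverse direction of your outer-measure identification. You write $\mathcal{O}=\bigl(\mathcal{O}\cap\bigsqcup_n J_n\bigr)\sqcup\bigl(\mathcal{O}\setminus\overline{\bigsqcup_n J_n}\bigr)$ ``a.e.'' on the grounds that the removed set is $\bigcup_n\partial J_n$, which is null. In fact the removed set is $\mathcal{O}\cap\partial\bigl(\bigcup_n J_n\bigr)$, and for a \emph{countable} union of intervals this is not contained in $\bigcup_n\partial J_n$ and need not be null: if $\{J_n\}$ are the complementary intervals of a fat Cantor set $K\subset(0,1)$, then $\partial\bigl(\bigcup_n J_n\bigr)\supset K$ has positive measure. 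So the family you construct need not belong to $\Pi(\mathcal{O})$, and the inequality $\inf_{P\in\Pi(\mathcal{O})}U(\chi_F,P)\le\overline{\mu}(F)$ is not established; the finite-versus-countable distinction is precisely why the analogous move in your inner-measure half is fine but fails here. The repair is simpler than your construction and mirrors your own restriction argument: take an arbitrary $P'=\{J_k\}\in\Pi(\mathbb{R}^d)$, decompose each open set $J_k\cap\mathcal{O}$ into intervals by the open set decomposition lemma, and let $P$ be the union of these families. Then $P\in\Pi(\mathcal{O})$, and any interval $I$ of $P$ with $\esssup_I\chi_F=1$ lies inside a $J_k$ with $\esssup_{J_k}\chi_F=1$, while the intervals inside a fixed $J_k$ have total volume at most $|J_k|$; hence $U(\chi_F,P)\le U(\chi_F,P')$, and taking the infimum over $P'$ gives $\inf_{P\in\Pi(\mathcal{O})}U(\chi_F,P)\le\overline{\mu}(F)$ without ever touching the boundary of a countable union. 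With that replacement (and keeping your appeal to Corollary \ref{outer measure2} for the opposite inequality), your argument closes and is a more complete version of the paper's proof.
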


\begin{proof}
Fix $\{I_m\} \in \Pi(\mathcal{O})$. 
Since $\chi_{E}(x) +\chi_{\mathcal{O}\cap E^c}(x)=1$ on $\mathcal{O}$,  
\[
\chi_E(x) |I_m| = |I_m| -\chi_{\mathcal{O}\cap E^c}(x)|I_m|. 
\]
Taking essential infimum over $I_m$ yields 
\begin{equation*} 
\begin{split} 
(\essinf_{x \in I_m}\chi_E(x)) |I_m|
& =
\essinf_{x \in I_m}(
 |I_m| -\chi_{\mathcal{O}\cap E^c}(x)|I_m|)\\[1ex]
 &=
 |I_m|-(\esssup_{x \in I_m}\chi_{\mathcal{O}\cap E^c}(x))|I_m|
\end{split}
\end{equation*}
Summing over $m$ and then taking the supremum over 
$\{I_m\}\in  \Pi(\mathcal{O})$ gives the claim. 
\end{proof}

We next show that intersections of open sets are open, 
and use this to prove 
that arbitrary open sets (with finite measure) 
satisfy Carath\'odory's condition in Lemmas \ref{preCC4open} 
and \ref{CC4open}. 
Combining this with Lemmas \ref{relation inner and outer} and \ref{measurability4measurable+open}, 
we see that the intersection of a finite-measure set 
with an open set again has finite measure. 
Finally, Proposition \ref{CC} establishes the announced equivalence.

\begin{lem}\label{preCC4open}
For open sets 
$\mathcal{O}_1, \mathcal{O}_2$ with $\mu(\mathcal{O}_1)<\infty$, 
\[
\mu(\mathcal{O}_1)=\mu(\mathcal{O}_1 \cap \mathcal{O}_2)
+ \overline{\mu}(\mathcal{O}_1 \cap \mathcal{O}_2^c). 
\]
\end{lem}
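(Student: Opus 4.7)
The plan is to apply Lemma \ref{representation of inner measure by outer measure} directly to the nested pair $\mathcal{O}_1 \cap \mathcal{O}_2 \subset \mathcal{O}_1$, and then to upgrade the inner measure appearing on the left-hand side to the full measure $\mu$.

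The first step is to establish that $\mathcal{O}_1 \cap \mathcal{O}_2 \in \mathcal{M}(\mathbb{R}^d)$. The intersection is open, and monotonicity of the outer measure yields $\overline{\mu}(\mathcal{O}_1 \cap \mathcal{O}_2) \leq \mu(\mathcal{O}_1) < \infty$. Choosing any $\{I_m\} \in \Pi(\mathcal{O}_1 \cap \mathcal{O}_2)$, the same telescoping chain
\[
\sum_{m=1}^{\infty} |I_m| \;\leq\; \underline{\mu}(\mathcal{O}_1 \cap \mathcal{O}_2) \;\leq\; \overline{\mu}(\mathcal{O}_1 \cap \mathcal{O}_2) \;\leq\; \sum_{m=1}^{\infty} |I_m|
\]
used in the proof of the earlier proposition on bounded open sets forces equality throughout and gives $\mathcal{O}_1 \cap \mathcal{O}_2 \in \mathcal{M}(\mathbb{R}^d)$ together with $\mu(\mathcal{O}_1 \cap \mathcal{O}_2) = \sum_m |I_m|$.

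With this in hand, I would invoke Lemma \ref{representation of inner measure by outer measure} with $E := \mathcal{O}_1 \cap \mathcal{O}_2$ inside the ambient open set $\mathcal{O}_1$, obtaining
\[
\underline{\mu}(\mathcal{O}_1 \cap \mathcal{O}_2) \;=\; \mu(\mathcal{O}_1) \;-\; \overline{\mu}\bigl(\mathcal{O}_1 \cap (\mathcal{O}_1 \cap \mathcal{O}_2)^c\bigr).
\]
A De Morgan calculation collapses the inner complement, since $\mathcal{O}_1 \cap (\mathcal{O}_1 \cap \mathcal{O}_2)^c = \mathcal{O}_1 \cap (\mathcal{O}_1^c \cup \mathcal{O}_2^c) = \mathcal{O}_1 \cap \mathcal{O}_2^c$, and the first step lets us replace $\underline{\mu}(\mathcal{O}_1 \cap \mathcal{O}_2)$ by $\mu(\mathcal{O}_1 \cap \mathcal{O}_2)$; rearranging then yields the claim. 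The only point requiring care is the first step, since the earlier proposition was stated explicitly for \emph{bounded} open sets, whereas $\mathcal{O}_1$ is assumed only to have finite measure. The proof of that proposition, however, uses only finiteness of the total partition sum, so the mild rewording above suffices and no genuine obstacle arises.
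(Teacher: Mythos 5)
Your proof is correct and follows essentially the same route as the paper: apply Lemma \ref{representation of inner measure by outer measure} with $E=\mathcal{O}_1\cap\mathcal{O}_2$ inside $\mathcal{O}_1$, simplify by De Morgan, and upgrade $\underline{\mu}(\mathcal{O}_1\cap\mathcal{O}_2)$ to $\mu(\mathcal{O}_1\cap\mathcal{O}_2)$ because the intersection is an open set of finite measure. Your extra care in noting that the earlier proposition was stated for \emph{bounded} open sets, and that its proof really only needs finiteness of $\sum_m|I_m|$, addresses a point the paper passes over silently.
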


\begin{proof}
Apply Lemma \ref{representation of inner measure by outer measure} with  
$\mathcal{O}=\mathcal{O}_1$ and $E=\mathcal{O}_1 \cap \mathcal{O}_2$. 
Since 
\[ 
E^c=(\mathcal{O}_1\cap \mathcal{O}_2)^c=\mathcal{O}_1^c \cup \mathcal{O}_2^c, 
\quad 
\mathcal{O}_1 \cap E^c
= \mathcal{O}_1 \cap \mathcal{O}_2^c, 
\]
we obtain 
\[
\underline{\mu}(\mathcal{O}_1 \cap \mathcal{O}_2)
=\mu(\mathcal{O}_1)- \overline{\mu}(\mathcal{O}_1 \cap \mathcal{O}_2^c). 
\]
Because $\mathcal{O}_1 \cap \mathcal{O}_2$ 
is an open set of finite measure, 
it belongs to $\mathcal{M}(\mathbb{R}^d)$, and the identity yields the lemma.
\end{proof}

\begin{lem}\label{CC4open}
For any 
$E\subset \mathbb{R}^d$ and any open $\mathcal{O}\subset \mathbb{R}^d$, 
\[
\overline{\mu}(E)=\overline{\mu}(E \cap \mathcal{O})
+ \overline{\mu}(E \cap \mathcal{O}^c). 
\]
\end{lem}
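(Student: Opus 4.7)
The inequality $\overline{\mu}(E) \le \overline{\mu}(E \cap \mathcal{O}) + \overline{\mu}(E \cap \mathcal{O}^c)$ is immediate from the finite case of the $\sigma$-subadditivity lemma, applied to the decomposition $E = (E \cap \mathcal{O}) \cup (E \cap \mathcal{O}^c)$. In particular, if $\overline{\mu}(E) = \infty$, the right-hand side must also be $\infty$ and there is nothing to prove, so henceforth I would assume $\overline{\mu}(E) < \infty$.

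For the reverse inequality the plan is to squeeze $E$ inside an open set whose measure is almost minimal. Given $\varepsilon > 0$, Proposition~\ref{outer measure1} yields an open $\mathcal{U} \supset E$ with $\mu(\mathcal{U}) < \overline{\mu}(E) + \varepsilon$. The set $\mathcal{U} \cap \mathcal{O}$ is again open, and by monotonicity its upper measure is at most $\mu(\mathcal{U}) < \infty$; the chain of inequalities in the proof of the open-set proposition then identifies $\underline{\mu}(\mathcal{U} \cap \mathcal{O}) = \overline{\mu}(\mathcal{U} \cap \mathcal{O})$, placing $\mathcal{U} \cap \mathcal{O}$ in $\mathcal{M}(\mathbb{R}^d)$. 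This verifies the hypotheses of Lemma~\ref{preCC4open} with $\mathcal{O}_1 = \mathcal{U}$ and $\mathcal{O}_2 = \mathcal{O}$, giving
\[
\mu(\mathcal{U}) = \mu(\mathcal{U} \cap \mathcal{O}) + \overline{\mu}(\mathcal{U} \cap \mathcal{O}^c).
\]

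From $E \subset \mathcal{U}$ together with monotonicity one obtains $\overline{\mu}(E \cap \mathcal{O}) \le \mu(\mathcal{U} \cap \mathcal{O})$ and $\overline{\mu}(E \cap \mathcal{O}^c) \le \overline{\mu}(\mathcal{U} \cap \mathcal{O}^c)$; adding these and inserting the displayed identity produces
\[
\overline{\mu}(E \cap \mathcal{O}) + \overline{\mu}(E \cap \mathcal{O}^c) \le \mu(\mathcal{U}) < \overline{\mu}(E) + \varepsilon.
\]
Letting $\varepsilon \downarrow 0$ yields the reverse inequality and closes the proof.

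The only point demanding any care is the observation that an open set of finite upper measure --- not merely a \emph{bounded} one --- automatically lies in $\mathcal{M}(\mathbb{R}^d)$, which is needed before Lemma~\ref{preCC4open} can be invoked on $\mathcal{U} \cap \mathcal{O}$. Once this mild strengthening of the earlier proposition is acknowledged, the remainder is a transparent one-step reduction from the general set $E$ to the open approximant $\mathcal{U}$ supplied by outer regularity.
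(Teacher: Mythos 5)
Your proof is correct and follows essentially the same route as the paper's: outer regularity (Proposition \ref{outer measure1}) to choose an open $\mathcal{U}\supset E$ of almost-minimal measure, Lemma \ref{preCC4open} applied with $\mathcal{O}_1=\mathcal{U}$ and $\mathcal{O}_2=\mathcal{O}$, then monotonicity plus $\sigma$-subadditivity for the reverse inequality. Your added care --- treating $\overline{\mu}(E)=\infty$ separately and observing that an open set of finite (not merely bounded) measure lies in $\mathcal{M}(\mathbb{R}^d)$ before invoking Lemma \ref{preCC4open} --- only makes explicit points the paper leaves implicit.
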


\begin{proof}
Since $\overline{\mu}(E)$ 
can be approximated from outside by open sets, for any $\varepsilon>0$ there exists 
$\mathcal{O}_1 \supset E$ with 
\[
\mu(\mathcal{O}_1) \leq \overline{\mu}(E) + \varepsilon. 
\]
By Lemma \ref{preCC4open} and monotonicity of $\overline{\mu}$, 
\begin{equation*} 
\begin{split} 
\mu(\mathcal{O}_1)
= \mu(\mathcal{O}_1 \cap \mathcal{O})+ \overline{\mu}(\mathcal{O}_1 \cap \mathcal{O}^c) 
\geq \overline{\mu}(E \cap \mathcal{O}) + \overline{\mu}(E \cap \mathcal{O}^c). 
\end{split}
\end{equation*}
Thus 
\[
\overline{\mu}(E) + \varepsilon \geq \mu(\mathcal{O}_1)
\geq 
\overline{\mu}(E \cap \mathcal{O}) + \overline{\mu}(E \cap \mathcal{O}^c). 
\]
Since 
$\varepsilon>0$
is arbitrary and the reverse inequality follows from 
$\sigma$-subadditivity, the lemma holds. 
\end{proof}

\begin{lem}\label{relation inner and outer}
For any $A, B \subset \mathbb{R}^d$, 
\[
\underline{\mu}(A \sqcup B)
\leq \underline{\mu}(A) + \overline{\mu}(B) \leq \overline{\mu}(A \sqcup B). 
\]
\end{lem}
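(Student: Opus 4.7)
The plan is to prove each inequality separately, combining Proposition \ref{inner measure1}—which represents $\underline{\mu}$ as a supremum of $\overline{\mu}(F)$ over closed $F$—with Lemma \ref{CC4open}, the Carath\'eodory-type splitting by an open set. I will first reduce to the case $\overline{\mu}(B),\underline{\mu}(A)<\infty$; if either is infinite then both inequalities hold trivially, since $B\subset A\sqcup B$ and the analogous inclusion for closed subsets of $A$ force $\overline{\mu}(A\sqcup B)=\infty$ and $\underline{\mu}(A\sqcup B)=\infty$ respectively.

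For the upper bound $\underline{\mu}(A)+\overline{\mu}(B)\leq \overline{\mu}(A\sqcup B)$, fix a closed $F$ with $F\subset A$ a.e.\ and apply Lemma \ref{CC4open} to the set $F\cup B$ with the open set $F^c$:
\[
\overline{\mu}(F\cup B) = \overline{\mu}\bigl((F\cup B)\cap F^c\bigr) + \overline{\mu}\bigl((F\cup B)\cap F\bigr) = \overline{\mu}(B\setminus F) + \overline{\mu}(F).
\]
Because $A\cap B=\varnothing$ and $F\subset A$ a.e., $F\cap B\subset F\setminus A$ is null, hence $\overline{\mu}(B\setminus F)=\overline{\mu}(B)$. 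Since $F\cup B\subset A\sqcup B$ a.e., $\sigma$-subadditivity gives $\overline{\mu}(F)+\overline{\mu}(B)=\overline{\mu}(F\cup B)\leq \overline{\mu}(A\sqcup B)$; taking the supremum over $F$ via Proposition \ref{inner measure1} concludes this half.

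For the lower bound $\underline{\mu}(A\sqcup B)\leq \underline{\mu}(A)+\overline{\mu}(B)$, fix a closed $F\subset A\sqcup B$ a.e.\ and $\varepsilon>0$, and use Proposition \ref{outer measure1} to pick an open $\mathcal{O}_B\supset B$ with $\mu(\mathcal{O}_B)\leq \overline{\mu}(B)+\varepsilon$. The set $F\cap\mathcal{O}_B^c$ is closed, and because $B\subset\mathcal{O}_B$ while $F\subset A\sqcup B$ a.e., it is contained in $A$ a.e. Hence $\overline{\mu}(F\cap\mathcal{O}_B^c)\leq \underline{\mu}(A)$ by Proposition \ref{inner measure1}, and $\sigma$-subadditivity yields
\[
\overline{\mu}(F) \leq \overline{\mu}(F\cap\mathcal{O}_B^c) + \overline{\mu}(F\cap \mathcal{O}_B) \leq \underline{\mu}(A)+\mu(\mathcal{O}_B) \leq \underline{\mu}(A)+\overline{\mu}(B)+\varepsilon.
\]
Taking the supremum over $F$ and then $\varepsilon\to 0$ finishes the proof.

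The main subtlety throughout is bookkeeping with the \emph{a.e.}\ containment: I must repeatedly invoke that $\overline{\mu}$ is invariant under null modifications, and verify that carved-out sets such as $F\cap\mathcal{O}_B^c$ remain a.e.\ inside $A$ when $F$ is only a.e.\ inside $A\sqcup B$. These checks propagate straightforwardly from the definitions of a.e.\ inclusion together with the $\sigma$-subadditivity established earlier, so the structural combination of inner approximation of one set by closed subsets with outer approximation of the other by open supersets is the substantive content of the argument.
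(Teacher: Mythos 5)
Your proof is correct, but it takes a genuinely different route from the paper's. The paper disposes of the lemma in two lines: it observes the pointwise inequality $\essinf_{x\in I}(f+g)\le \essinf_{x\in I}f+\esssup_{x\in I}g\le \esssup_{x\in I}(f+g)$, applies it with $f=\chi_A$, $g=\chi_B$ (so that $f+g=\chi_{A\sqcup B}$) on each cell of a partition, and then passes to the supremum/infimum over partitions (implicitly via common refinements). That argument stays entirely at the level of partition sums and uses nothing beyond the definitions of the upper and lower integrals. You instead run the classical regularity argument: inner-approximate one set by closed sets (Proposition \ref{inner measure1}), outer-approximate the other by open sets (Proposition \ref{outer measure1}), and splice them together with the Carath\'eodory splitting for open sets (Lemma \ref{CC4open}) and $\sigma$-subadditivity. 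All of these ingredients are established before Lemma \ref{relation inner and outer} in the paper and none of them depends on it, so there is no circularity; your bookkeeping with the a.e.\ inclusions (e.g.\ that $F\cap B$ is null when $F\subset A$ a.e.\ and $A\cap B=\varnothing$, and that $F\cap\mathcal{O}_B^c\subset A$ a.e.) is also sound, as is the reduction to the finite case. The trade-off is that your proof is considerably longer and leans on the topological regularity of the measure, whereas the paper's proof is a purely combinatorial statement about essential bounds on partition cells --- which is more in keeping with the paper's Riemann-sum philosophy and would survive in settings where open/closed approximation is not yet available.
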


\begin{proof}
For any functions $f$, $g$ and a set $A \subset \mathbb{R}^d$, 
\[
\essinf_{x \in A} (f(x)+g(x)) 
\leq \essinf_{x \in A}f(x) + \esssup_{x \in A}g(x) \leq \esssup_{x \in A}(f(x)+g(x))
\]
Apply this with $f = \chi_A$, $g=\chi_B$ 
and take the appropriate supremum/infimum over partitions.
\end{proof}

\begin{lem}\label{measurability4measurable+open}
If $E \in \mathcal{M}(\mathbb{R}^d)$, then for every open $\mathcal{O}$
we have $E\cap \mathcal{O} \in \mathcal{M}(\mathbb{R}^d)$. 
\end{lem}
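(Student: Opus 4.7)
The plan is to combine the Carathéodory-type splitting for open sets (Lemma \ref{CC4open}) with the outer/inner sandwich estimate for disjoint unions (Lemma \ref{relation inner and outer}), using the hypothesis $E\in\mathcal{M}(\mathbb{R}^d)$ to force equality in the sandwich.

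First I would note the disjoint decomposition $E=(E\cap\mathcal{O})\sqcup(E\cap\mathcal{O}^c)$. Applying Lemma \ref{CC4open} to the set $E$ and the open set $\mathcal{O}$ gives
\[
\overline{\mu}(E)=\overline{\mu}(E\cap\mathcal{O})+\overline{\mu}(E\cap\mathcal{O}^c).
\]
Applying Lemma \ref{relation inner and outer} to the disjoint pair $A:=E\cap\mathcal{O}$, $B:=E\cap\mathcal{O}^c$ gives
\[
\underline{\mu}(E)\;\le\;\underline{\mu}(E\cap\mathcal{O})+\overline{\mu}(E\cap\mathcal{O}^c)\;\le\;\overline{\mu}(E).
\]

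Now I use $E\in\mathcal{M}(\mathbb{R}^d)$, i.e.\ $\underline{\mu}(E)=\overline{\mu}(E)=\mu(E)<\infty$, to squeeze the middle term to $\mu(E)$. Combined with the identity from Lemma \ref{CC4open} this yields
\[
\underline{\mu}(E\cap\mathcal{O})+\overline{\mu}(E\cap\mathcal{O}^c)=\overline{\mu}(E\cap\mathcal{O})+\overline{\mu}(E\cap\mathcal{O}^c).
\]
By monotonicity, $\overline{\mu}(E\cap\mathcal{O}^c)\le\overline{\mu}(E)<\infty$, so this finite quantity can be subtracted from both sides to give $\underline{\mu}(E\cap\mathcal{O})=\overline{\mu}(E\cap\mathcal{O})$. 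Since monotonicity also gives $\overline{\mu}(E\cap\mathcal{O})\le\overline{\mu}(E)<\infty$, we conclude $E\cap\mathcal{O}\in\mathcal{M}(\mathbb{R}^d)$.

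The only subtle point, which is hardly an obstacle, is the legitimacy of the subtraction step: one must observe that $\overline{\mu}(E\cap\mathcal{O}^c)$ is finite before cancelling it on both sides. This is immediate from the hypothesis that $E$ itself has finite (outer) measure together with monotonicity of $\overline{\mu}$. No further structural results about $\mathcal{O}$ beyond its openness are needed, precisely because Lemma \ref{CC4open} already encodes the Carathéodory split for open sets.
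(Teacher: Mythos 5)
Your proof is correct and follows essentially the same route as the paper: the same decomposition $E=(E\cap\mathcal{O})\sqcup(E\cap\mathcal{O}^c)$, the same two lemmas, and the same squeeze using $\underline{\mu}(E)=\overline{\mu}(E)$. Your explicit remark that $\overline{\mu}(E\cap\mathcal{O}^c)<\infty$ must be checked before cancelling it is a small point of care that the paper leaves implicit.
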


\begin{proof}
From Lemma \ref{relation inner and outer} and 
Lemma \ref{CC4open}, we have 
\begin{equation*} 
\begin{split} 
\underline{\mu}(E) 
& =\underline{\mu}((E \cap \mathcal{O})\sqcup (E \cap \mathcal{O}^c)) \\
& \leq 
\underline{\mu}(E \cap \mathcal{O})
+\overline{\mu}(E \cap \mathcal{O}^c)
\leq \overline{\mu}(E) 
= \overline{\mu}(E\cap \mathcal{O}) 
+ \overline{\mu}(E \cap \mathcal{O}^c). 
\end{split}
\end{equation*}
Hence 
$
\underline{\mu}(E \cap \mathcal{O})
=\overline{\mu}(E\cap \mathcal{O}) 
$, so 
$E \cap \mathcal{O} \in \mathcal{M}(\mathbb{R}^d)$. 
\end{proof}

\begin{prop}\label{CC} 
Let $E\subset\mathbb{R}^d$ with $\overline{\mu}(E)<\infty$.
Then $E\in\mathcal{M}(\mathbb{R}^d)$ 
if and only if for every $A \subset \mathbb{R}^d$,
\[
\overline{\mu}(A)
= \overline{\mu}(A\cap E) + \overline{\mu}(A\cap E^c). 
\]
\end{prop}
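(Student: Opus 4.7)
The plan is to prove both implications via outer approximation by open sets, leveraging Proposition~\ref{outer measure1} together with Lemmas~\ref{representation of inner measure by outer measure}, \ref{relation inner and outer}, and \ref{measurability4measurable+open}. In the forward direction the case $\overline{\mu}(A)=\infty$ is disposed of immediately by $\sigma$-subadditivity (both sides are $\infty$), so I would focus on $\overline{\mu}(A)<\infty$.

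For the forward implication, assume $E\in\mathcal{M}(\mathbb{R}^d)$. Given $\varepsilon>0$, I would pick by Proposition~\ref{outer measure1} an open set $\mathcal{U}\supset A$ with $\mu(\mathcal{U})<\overline{\mu}(A)+\varepsilon<\infty$. Since $\mathcal{U}$ is an open set of finite outer measure it lies in $\mathcal{M}(\mathbb{R}^d)$, and Lemma~\ref{measurability4measurable+open} then gives $E\cap\mathcal{U}\in\mathcal{M}(\mathbb{R}^d)$. Applying Lemma~\ref{relation inner and outer} to the disjoint decomposition $\mathcal{U}=(E\cap\mathcal{U})\sqcup(E^c\cap\mathcal{U})$ yields the sandwich
\[
\mu(\mathcal{U})=\underline{\mu}(\mathcal{U})\leq \underline{\mu}(E\cap\mathcal{U})+\overline{\mu}(E^c\cap\mathcal{U})\leq \overline{\mu}(\mathcal{U})=\mu(\mathcal{U}),
\]
which forces equality throughout. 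Replacing $\underline{\mu}(E\cap\mathcal{U})$ by $\overline{\mu}(E\cap\mathcal{U})$ and invoking monotonicity of $\overline{\mu}$ with $\mathcal{U}\supset A$ gives $\overline{\mu}(A)+\varepsilon>\overline{\mu}(A\cap E)+\overline{\mu}(A\cap E^c)$; letting $\varepsilon\downarrow 0$ and combining with $\sigma$-subadditivity yields Carath\'eodory's identity.

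For the reverse implication, assume Carath\'eodory's identity and $\overline{\mu}(E)<\infty$. I would pick, for $\varepsilon>0$, an open $\mathcal{O}\supset E$ with $\mu(\mathcal{O})\leq\overline{\mu}(E)+\varepsilon<\infty$ and apply the identity with $A=\mathcal{O}$: since $\mathcal{O}\cap E=E$, this gives $\mu(\mathcal{O})=\overline{\mu}(E)+\overline{\mu}(\mathcal{O}\cap E^c)$. Lemma~\ref{representation of inner measure by outer measure} then produces $\underline{\mu}(E)=\mu(\mathcal{O})-\overline{\mu}(\mathcal{O}\cap E^c)=\overline{\mu}(E)$, so $E\in\mathcal{M}(\mathbb{R}^d)$. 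The main obstacle is the forward-direction sandwich: it requires \emph{simultaneously} knowing $\mathcal{U}\in\mathcal{M}(\mathbb{R}^d)$ and $E\cap\mathcal{U}\in\mathcal{M}(\mathbb{R}^d)$, so that both pairs of inner/outer measures collapse and the two-sided inequality of Lemma~\ref{relation inner and outer} becomes an equality. Once that collapse is justified, the rest is bookkeeping with $\varepsilon$ and the outer-regularity approximation.
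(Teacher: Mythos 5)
Your proof is correct and follows essentially the same route as the paper: both directions rest on outer approximation by open sets of finite measure, Lemma~\ref{representation of inner measure by outer measure}, and Lemma~\ref{measurability4measurable+open}, and your reverse implication is verbatim the paper's argument. The only cosmetic difference is in the forward direction, where you obtain the identity $\mu(\mathcal{U})=\underline{\mu}(E\cap\mathcal{U})+\overline{\mu}(E^c\cap\mathcal{U})$ by collapsing the two-sided inequality of Lemma~\ref{relation inner and outer} (using $\mathcal{U}\in\mathcal{M}(\mathbb{R}^d)$), whereas the paper reads it off directly from Lemma~\ref{representation of inner measure by outer measure} applied to $E'=\mathcal{U}\cap E$; both steps are valid and equivalent in substance.
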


\begin{proof}
First assume the displayed equality holds. 
For any open $\mathcal{O} \supset E$ with $\mu(\mathcal{O})<\infty$, 
\[
\overline{\mu}(\mathcal{O})=\overline{\mu}(E) + \overline{\mu}(\mathcal{O}\cap E^c). 
\]
i.e., 
$\overline{\mu}(\mathcal{O})-\overline{\mu}(\mathcal{O}\cap E^c)=\overline{\mu}(E)$. 
By Lemma \ref{representation of inner measure by outer measure}, 
the left-hand side equals 
$\underline{\mu}(E)$; 
thus $\underline{\mu}(E)=\overline{\mu}(E)$. 

Conversely, for any $A$ we choose open $\mathcal{O} \supset A$. 
Apply Lemma \ref{representation of inner measure by outer measure} with
$E' := \mathcal{O}\cap E$, so $(E')^c= \mathcal{O}^c \cup E^c$, 
then 
\begin{equation*} 
\begin{split} 
\underline{\mu}(\mathcal{O} \cap E) 
=\mu(\mathcal{O}) -\overline{\mu}(\mathcal{O}\cap E^c). 
\end{split}
\end{equation*}
Since $E \in \mathcal{M}(\mathbb{R}^d)$, Lemma \ref{measurability4measurable+open} 
yields $\mathcal{O}\cap E \in \mathcal{M}(\mathbb{R}^d)$; 
hence $\underline{\mu}(\mathcal{O} \cap E) =\overline{\mu}(\mathcal{O} \cap E)$. 
Consequently, we have 
\begin{equation*} 
\begin{split} 
\mu(\mathcal{O}) 
 = \underline{\mu}(\mathcal{O} \cap E) 
+ \overline{\mu}(\mathcal{O}\cap E^c) 
= \overline{\mu}(\mathcal{O} \cap E) 
+ \overline{\mu}(\mathcal{O}\cap E^c)
\geq 
\overline{\mu}(A \cap E) 
+ \overline{\mu}(A\cap E^c)
\end{split}
\end{equation*}
by monotonicity. 
Taking the infimum over open $\mathcal{O} \supset A$ yields 
\[
\overline{\mu}(A)
\geq \overline{\mu}(A\cap E) + \overline{\mu}(A\cap E^c), 
\]
and the reverse inequality follows from 
$\sigma$-subadditivity. This is the Carath\'eodory condition. 
\end{proof}

Thus, within the realm of finite measures, 
finite-measure sets coincide with sets satisfying Carath\'eodory's condition. 
However, finite-measure sets only cover those sets of finite measure, 
whereas the Carath\'eodory class imposes no finiteness requirement 
and hence includes sets of infinite measure. 
We therefore extend the class as follows. 

\begin{defn}[Measurable sets $\mathcal{M}^*(\mathbb{R}^d)$]
We say $E \in \mathcal{M}^*(\mathbb{R}^d)$ if for every 
$A \subset \mathbb{R}^d$, 
\[
\overline{\mu}(A)
= \overline{\mu}(A\cap E) + \overline{\mu}(A\cap E^c). 
\]
An element of $\mathcal{M}^*(\mathbb{R}^d)$ 
is called a \emph{measurable set}. In particular, if 
$E \in \mathcal{M}^*(\mathbb{R}^d)$ but $E \notin \mathcal{M}(\mathbb{R}^d)$, 
we define 
\[
\mu(E)=\underline{\mu}(E)=\overline{\mu}(E)=\infty. 
\]
\end{defn}

\begin{defn}
The pair $(\mathbb{R}^d, \mathcal{M}^*(\mathbb{R}^d))$
is called a \emph{measurable space}; together with a measure $\mu$ 
the triple $(\mathbb{R}^d, \mathcal{M}^*(\mathbb{R}^d), \mu)$  
is called a \emph{measure space}. 
\end{defn}

\section{$\sigma$-Algebra Structure and $\sigma$-Additivity}

In this section we recall that the collection of measurable sets forms a 
$\sigma$-algebra, i.e., 
it is closed under the standard set-theoretic operations. 
This is well known in the classical Lebesgue theory 
and is not an original contribution; 
we include it to keep the exposition self-contained 
and to clarify the overall structure of the theory. 
The material here essentially follows 
\cite[Theorem 1.17 and Proposition 1.6]{a-dp-m}, 
\cite[Theorem 1.3.6 and Proposition 1.2.5]{c}, 
\cite[Theorem 1.11 and Theorem 1.8]{Folland},  
\cite[\S 2.3, Proposition 13 and Theorem 15]{r-f},  
\cite[Theorem 3.24 and Proposition 3.30 and Proposition 3.34]{s-k},  
\cite[Chapter 1: Theorem 3.2 and Corollary 3.3; Chapter 6: Theorem 1.1]{s-s}

\begin{defn}[$\sigma$-algebra]
Let $X$ be a set. 
A family  $\mathcal{E}$ of subsets of $X$ 
is a $\sigma$-algebra if: 

\begin{itemize}

\item[\rm{(i)}] $X \in \mathcal{E}$ and $\varnothing \in \mathcal{E}$.

\item[\rm{(ii)}] If $E \in \mathcal{E}$, then $E^c \in \mathcal{E}$.

\item[\rm{(iii)}] If $\{E_n\} \subset \mathcal{E}$, then 
$\bigcup\limits_{n=1}^{\infty}E_n \in \mathcal{E}$.

\end{itemize}
\end{defn}

\begin{prop}\label{sigma-algebra}
$\mathcal{M}^*(\mathbb{R}^d)$ is a $\sigma$-algebra.  
\end{prop}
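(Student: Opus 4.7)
The plan is to verify the three defining axioms of a $\sigma$-algebra in turn. Axioms (i) and (ii) are essentially immediate: taking $E=\mathbb{R}^d$ (or $E=\varnothing$) reduces the Carath\'eodory identity to $\overline{\mu}(A)=\overline{\mu}(A)+\overline{\mu}(\varnothing)$, and the identity is manifestly symmetric in $E$ and $E^c$, so $\mathcal{M}^*(\mathbb{R}^d)$ contains $\mathbb{R}^d$ and $\varnothing$ and is closed under complementation. All of the substantive work lies in axiom (iii).

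I would handle countable unions in two stages. First, closure under finite unions: given $E_1,E_2\in\mathcal{M}^*(\mathbb{R}^d)$ and an arbitrary $A\subset\mathbb{R}^d$, apply the Carath\'eodory condition for $E_1$ to $A$, then the condition for $E_2$ to the test set $A\cap E_1^c$, to obtain
\[
\overline{\mu}(A)=\overline{\mu}(A\cap E_1)+\overline{\mu}(A\cap E_1^c\cap E_2)+\overline{\mu}(A\cap E_1^c\cap E_2^c).
\]
Since $A\cap(E_1\cup E_2)=(A\cap E_1)\cup(A\cap E_1^c\cap E_2)$ and $(E_1\cup E_2)^c=E_1^c\cap E_2^c$, $\sigma$-subadditivity applied to the first two terms yields
\[
\overline{\mu}(A)\geq\overline{\mu}(A\cap(E_1\cup E_2))+\overline{\mu}(A\cap(E_1\cup E_2)^c),
\]
with the reverse inequality from $\sigma$-subadditivity. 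Combined with complementation, this gives closure under finite intersections and set differences.

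For a countable family $\{E_n\}\subset\mathcal{M}^*(\mathbb{R}^d)$, I would disjointify by setting $F_n:=E_n\setminus\bigcup_{k=1}^{n-1}E_k$; each $F_n$ lies in $\mathcal{M}^*(\mathbb{R}^d)$ by the finite-operation closure just established, and $\bigsqcup_{n=1}^\infty F_n=\bigcup_{n=1}^\infty E_n=:E$. Writing $S_N:=\bigsqcup_{n=1}^N F_n\in\mathcal{M}^*(\mathbb{R}^d)$, an induction using the Carath\'eodory condition for $F_N$ applied to $A\cap S_N$ (together with disjointness, which makes $A\cap S_N\cap F_N=A\cap F_N$ and $A\cap S_N\cap F_N^c=A\cap S_{N-1}$) gives the exact additive formula
\[
\overline{\mu}(A\cap S_N)=\sum_{n=1}^N\overline{\mu}(A\cap F_n).
\]
Applying the Carath\'eodory condition for $S_N$ to $A$ and using $S_N^c\supset E^c$ together with monotonicity of $\overline{\mu}$, we get
\[
\overline{\mu}(A)=\overline{\mu}(A\cap S_N)+\overline{\mu}(A\cap S_N^c)\geq\sum_{n=1}^N\overline{\mu}(A\cap F_n)+\overline{\mu}(A\cap E^c).
\]
Letting $N\to\infty$ and invoking $\sigma$-subadditivity for the bound $\sum_{n}\overline{\mu}(A\cap F_n)\geq\overline{\mu}(A\cap E)$ yields $\overline{\mu}(A)\geq\overline{\mu}(A\cap E)+\overline{\mu}(A\cap E^c)$, and the reverse inequality is $\sigma$-subadditivity once more.

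The main obstacle is the countable step: one must both establish the inductive \emph{equality} $\overline{\mu}(A\cap S_N)=\sum_n\overline{\mu}(A\cap F_n)$, which crucially uses that the Carath\'eodory condition is an equality (not merely an inequality) together with the disjointness of the $F_n$, and pass to the limit correctly by replacing $\overline{\mu}(A\cap S_N^c)$ with $\overline{\mu}(A\cap E^c)$ via monotonicity \emph{before} letting $N\to\infty$. Everything else is a direct application of earlier results on monotonicity and $\sigma$-subadditivity of $\overline{\mu}$.
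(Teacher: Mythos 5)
Your proof is correct and follows essentially the same route as the paper: closure under complements and finite set operations via a double application of Carath\'eodory's condition, then disjointification, an inductive finite-additivity identity $\overline{\mu}(A\cap S_N)=\sum_{n=1}^N\overline{\mu}(A\cap F_n)$, replacement of $\overline{\mu}(A\cap S_N^c)$ by $\overline{\mu}(A\cap E^c)$ via monotonicity before passing to the limit, and $\sigma$-subadditivity for the reverse inequality. The only cosmetic difference is that you establish closure under finite unions directly with a three-term decomposition, whereas the paper proves closure under finite intersections with a four-term decomposition and deduces unions by De Morgan.
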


We prove this by a sequence of lemmas.

\begin{lem}[Measurability of complements]\label{measurability4complement}
If $E \in \mathcal{M}^*(\mathbb{R}^d)$, then 
$E^c \in \mathcal{M}^*(\mathbb{R}^d)$. 
\end{lem}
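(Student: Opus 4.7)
The plan is to observe that the Carathéodory condition defining $\mathcal{M}^*(\mathbb{R}^d)$ is already symmetric in $E$ and $E^c$, so there is essentially nothing to compute. The only thing to verify is the double-complement identity $(E^c)^c = E$, and then the identity to be checked for $E^c$ is literally the same identity (with the two summands swapped) that is assumed to hold for $E$.

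More concretely, I would start from the hypothesis that $E \in \mathcal{M}^*(\mathbb{R}^d)$, which by definition means
\[
\overline{\mu}(A) = \overline{\mu}(A \cap E) + \overline{\mu}(A \cap E^c)
\]
for every $A \subset \mathbb{R}^d$. Writing $F := E^c$ and using $F^c = (E^c)^c = E$, the right-hand side becomes $\overline{\mu}(A \cap F^c) + \overline{\mu}(A \cap F)$, which by commutativity of addition equals $\overline{\mu}(A \cap F) + \overline{\mu}(A \cap F^c)$. Since $A$ was arbitrary, this is exactly the Carathéodory condition for $F = E^c$, so $E^c \in \mathcal{M}^*(\mathbb{R}^d)$.

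There is no real obstacle here: the lemma is a purely formal consequence of the symmetric form chosen for the definition. In a sense the hardest (or only) step is merely recognizing this symmetry, and noting that the definition in the paper was deliberately stated in a form that makes closure under complementation trivial. The real work for the $\sigma$-algebra structure will appear in the subsequent lemmas dealing with finite unions/intersections and then countable unions, not here.
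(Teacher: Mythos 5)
Your proof is correct and is exactly the argument the paper has in mind: the paper's own proof reads ``Immediate from Carath\'eodory's condition,'' and your write-up simply spells out the symmetry (via $(E^c)^c=E$ and commutativity of addition) that makes it immediate.
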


\begin{proof}
Immediate from Carath\'eodory's condition.
\end{proof}

\begin{lem}[Measurability of null sets]\label{measurability4empty}
If $E \in \mathcal{N}(\mathbb{R}^d)$, then $E \in \mathcal{M}^*(\mathbb{R}^d)$. 
\end{lem}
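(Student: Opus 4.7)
The plan is to verify Carathéodory's condition directly for $E$: for every $A \subset \mathbb{R}^d$,
\[
\overline{\mu}(A) = \overline{\mu}(A \cap E) + \overline{\mu}(A \cap E^c).
\]
One direction ($\leq$) is free: since $A = (A \cap E) \cup (A \cap E^c)$, the $\sigma$-subadditivity lemma already established gives $\overline{\mu}(A) \leq \overline{\mu}(A \cap E) + \overline{\mu}(A \cap E^c)$, so the real content lies in the reverse inequality.

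First I would show the auxiliary fact that $\overline{\mu}(E) = 0$ whenever $E \in \mathcal{N}(\mathbb{R}^d)$. Given $\varepsilon > 0$, use the definition of a null set to obtain $\{I_m\} \subset \mathscr{I}(\mathbb{R}^d)$ with $E \subset \bigcup_m I_m$ and $\sum_m |I_m| < \varepsilon$. Combining monotonicity of the outer measure, $\sigma$-subadditivity, and the identity $\overline{\mu}(I_m) = |I_m|$ (from the proposition on open sets), one gets
\[
\overline{\mu}(E) \leq \overline{\mu}\Bigl(\bigcup_m I_m\Bigr) \leq \sum_m \overline{\mu}(I_m) = \sum_m |I_m| < \varepsilon.
\]
Since $\varepsilon$ is arbitrary, $\overline{\mu}(E) = 0$.

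Now, by monotonicity, $A \cap E \subset E$ implies $\overline{\mu}(A \cap E) \leq \overline{\mu}(E) = 0$, so $\overline{\mu}(A \cap E) = 0$. Similarly, $A \cap E^c \subset A$ gives $\overline{\mu}(A \cap E^c) \leq \overline{\mu}(A)$. Adding these,
\[
\overline{\mu}(A \cap E) + \overline{\mu}(A \cap E^c) \leq 0 + \overline{\mu}(A) = \overline{\mu}(A),
\]
which is the missing direction. Combining both inequalities yields Carathéodory's condition, hence $E \in \mathcal{M}^*(\mathbb{R}^d)$.

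There is really no substantive obstacle in this lemma; the only point that demands care is confirming $\overline{\mu}(E) = 0$ using only tools already developed in the paper (the definition of null set, $\overline{\mu}(I) = |I|$, monotonicity, and $\sigma$-subadditivity), without accidentally invoking the later Carathéodory equivalence or any inner-measure identities. Once that micro-step is in place, the rest is a two-line monotonicity argument.
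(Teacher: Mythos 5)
Your proof is correct, but it takes a different route from the paper's. The paper disposes of the lemma in one line: since $\overline{\mu}(E)=\underline{\mu}(E)=0$, the set $E$ lies in $\mathcal{M}(\mathbb{R}^d)$, and then it invokes the inclusion $\mathcal{M}(\mathbb{R}^d)\subset\mathcal{M}^*(\mathbb{R}^d)$, which rests on the previously proved equivalence between finite-measure sets and the Carath\'eodory condition (Proposition \ref{CC}). You instead verify Carath\'eodory's condition directly: you establish $\overline{\mu}(E)=0$ from the definition of a null set together with monotonicity, $\sigma$-subadditivity, and $\mu(I)=|I|$, and then the inequality $\overline{\mu}(A\cap E)+\overline{\mu}(A\cap E^c)\le 0+\overline{\mu}(A)$ follows from monotonicity alone. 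Both arguments hinge on the same fact $\overline{\mu}(E)=0$ (in the paper this is also available via Corollary \ref{outer measure2}), but yours is logically lighter: it bypasses the inner measure and the finite-measure-set machinery entirely, so it could be stated and proved much earlier in the development, immediately after monotonicity and $\sigma$-subadditivity. The paper's version buys brevity by leaning on structure already built; yours buys independence from that structure. Your closing remark about not accidentally invoking the later equivalence is exactly the right point of care, and you have respected it.
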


\begin{proof}
Since 
$\overline{\mu}(E)=\underline{\mu}(E)=0$, we have 
$E \in \mathcal{M}(\mathbb{R}^d)
\subset \mathcal{M}^*(\mathbb{R}^d)$. 
\end{proof}

\begin{cor}[Measurability of $\varnothing$ and $\mathbb{R}^d$]
$\varnothing, 
\mathbb{R}^d \in \mathcal{M}^*(\mathbb{R}^d)$. 
\end{cor}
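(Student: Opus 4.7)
The plan is to derive this as an immediate consequence of the two preceding lemmas, with no new computation required. First I would observe that the empty set $\varnothing$ is a Lebesgue null set: the defining condition is vacuously satisfied, since one may cover $\varnothing$ by any sequence $\{I_m\}$ of open intervals with $\sum |I_m|<\varepsilon$ (for instance, take all $I_m=\varnothing$, whose measures are $0$ by the convention stipulated in the definition of open $d$-dimensional intervals). Hence $\varnothing \in \mathcal{N}(\mathbb{R}^d)$, and Lemma \ref{measurability4empty} gives $\varnothing \in \mathcal{M}^*(\mathbb{R}^d)$.

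Next I would obtain $\mathbb{R}^d \in \mathcal{M}^*(\mathbb{R}^d)$ by taking the complement: since $\mathbb{R}^d = \varnothing^c$ and $\mathcal{M}^*(\mathbb{R}^d)$ is closed under complementation by Lemma \ref{measurability4complement}, the conclusion follows.

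There is no real obstacle here; the only thing to be careful about is choosing the correct justification for $\varnothing$ being null. Alternatively, one can bypass the null-set route entirely and verify Carathéodory's condition directly: for any $A \subset \mathbb{R}^d$, both $A \cap \varnothing = \varnothing$ and $A \cap (\varnothing)^c = A$, so $\overline{\mu}(A \cap \varnothing) + \overline{\mu}(A \cap \varnothing^c) = 0 + \overline{\mu}(A) = \overline{\mu}(A)$, giving $\varnothing \in \mathcal{M}^*(\mathbb{R}^d)$; then invoke Lemma \ref{measurability4complement} for $\mathbb{R}^d$. Either route is a two-line argument, and I would present the lemma-based version to emphasize how the corollary slots into the $\sigma$-algebra development begun in Proposition \ref{sigma-algebra}.
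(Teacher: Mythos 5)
Your proposal is correct and matches the paper's proof: the paper likewise notes that $\varnothing$ is a null set (via $\overline{\mu}(\varnothing)=0$), applies Lemma \ref{measurability4empty} to get $\varnothing \in \mathcal{M}^*(\mathbb{R}^d)$, and then Lemma \ref{measurability4complement} to conclude $\mathbb{R}^d \in \mathcal{M}^*(\mathbb{R}^d)$. Your alternative direct Carath\'eodory verification is also fine, but the lemma-based route you chose is exactly the paper's.
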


\begin{proof}
Because $\overline{\mu}(\varnothing)=0$, we have  
$\varnothing \in \mathcal{N}(\mathbb{R}^d)$. 
Now apply Lemmas \ref{measurability4complement} and \ref{measurability4empty}.
\end{proof}

\begin{lem}
If $E_1, E_2 \in \mathcal{M}^*(\mathbb{R}^d)$, then 
$E_1 \cap E_2 \in \mathcal{M}^*(\mathbb{R}^d)$. 
\end{lem}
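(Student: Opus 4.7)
The plan is to verify Carathéodory's condition for $E_1\cap E_2$ by applying the condition for $E_1$ and $E_2$ in sequence with cleverly chosen test sets. Fix an arbitrary test set $A\subset \mathbb{R}^d$. First I would use measurability of $E_1$ with test set $A$ to split
\[
\overline{\mu}(A)=\overline{\mu}(A\cap E_1)+\overline{\mu}(A\cap E_1^c),
\]
and then apply measurability of $E_2$ to the test set $A\cap E_1$ to obtain
\[
\overline{\mu}(A\cap E_1)=\overline{\mu}(A\cap E_1\cap E_2)+\overline{\mu}(A\cap E_1\cap E_2^c).
\]
Substituting gives
\[
\overline{\mu}(A)=\overline{\mu}(A\cap E_1\cap E_2)+\overline{\mu}(A\cap E_1\cap E_2^c)+\overline{\mu}(A\cap E_1^c).
\]

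Next I would identify the last two terms with $\overline{\mu}(A\cap(E_1\cap E_2)^c)$. Since $(E_1\cap E_2)^c=E_1^c\cup E_2^c$, the set $A\cap(E_1\cap E_2)^c$ decomposes along $E_1$ as
\[
A\cap(E_1\cap E_2)^c\cap E_1=A\cap E_1\cap E_2^c,\qquad A\cap(E_1\cap E_2)^c\cap E_1^c=A\cap E_1^c.
\]
Applying Carathéodory's condition for $E_1$ once more, this time with the test set $A\cap(E_1\cap E_2)^c$, yields exactly
\[
\overline{\mu}\bigl(A\cap(E_1\cap E_2)^c\bigr)=\overline{\mu}(A\cap E_1\cap E_2^c)+\overline{\mu}(A\cap E_1^c).
\]
Combining this with the earlier identity produces the Carathéodory splitting for $E_1\cap E_2$, and since $A$ was arbitrary we conclude $E_1\cap E_2\in\mathcal{M}^*(\mathbb{R}^d)$.

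There is not really a hard step here; the argument is purely set-theoretic bookkeeping once one realises that the correct strategy is to apply the condition for $E_1$ twice — first to $A$, then to $A\cap(E_1\cap E_2)^c$ — with the condition for $E_2$ applied to $A\cap E_1$ in between. The mild pitfall to watch out for is accidentally writing $A\cap(E_1\cap E_2)^c$ as a disjoint union of $A\cap E_1^c$ and $A\cap E_1\cap E_2^c$ without checking that these are indeed disjoint (they are, since the former lies in $E_1^c$ and the latter in $E_1$); this cleanness is what lets us avoid invoking $\sigma$-subadditivity in the decisive step.
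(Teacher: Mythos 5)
Your proof is correct, and it takes a slightly different route from the paper's. The paper applies the Carath\'eodory condition for $E_1$ to $A$ and then the condition for $E_2$ to \emph{both} resulting pieces, producing the four-term decomposition
$\overline{\mu}(A)=\overline{\mu}(A\cap E_1\cap E_2)+\overline{\mu}(A\cap E_1\cap E_2^c)+\overline{\mu}(A\cap E_1^c\cap E_2)+\overline{\mu}(A\cap E_1^c\cap E_2^c)$;
it then groups the last three terms using $(E_1\cap E_2^c)\cup(E_1^c\cap E_2)\cup(E_1^c\cap E_2^c)=(E_1\cap E_2)^c$ together with $\sigma$-subadditivity to get the inequality $\overline{\mu}(A)\ge\overline{\mu}(A\cap(E_1\cap E_2))+\overline{\mu}(A\cap(E_1\cap E_2)^c)$, and invokes $\sigma$-subadditivity once more for the reverse inequality. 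You instead apply the condition for $E_2$ only to the test set $A\cap E_1$ and then re-apply the condition for $E_1$ to the test set $A\cap(E_1\cap E_2)^c$; since
$A\cap(E_1\cap E_2)^c\cap E_1=A\cap E_1\cap E_2^c$ and $A\cap(E_1\cap E_2)^c\cap E_1^c=A\cap E_1^c$,
this recombines the leftover terms \emph{exactly}, so you obtain the Carath\'eodory identity as an equality without ever appealing to subadditivity. What the paper's route buys is the standard textbook pattern, reusing the $\sigma$-subadditivity lemma already established; what yours buys is a leaner argument showing the Carath\'eodory class is closed under intersection using only the defining condition itself, with the splitting obtained in one pass rather than as two opposite inequalities. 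One cosmetic remark: your closing comment about checking disjointness is not really the point --- the decisive step works because the Carath\'eodory condition splits an \emph{arbitrary} test set, so no additivity (and hence no disjointness bookkeeping) is needed at all.
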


\begin{proof}
Using measurability of $E_1$ and $E_2$ successively, 
\begin{equation*} 
\begin{split} 
\overline{\mu}(A) 
& = \overline{\mu}(A \cap E_1)+\overline{\mu}(A \cap E_1^c) \\[1ex]
& = \overline{\mu}(A \cap E_1 \cap E_2)
+\overline{\mu}(A \cap E_1 \cap E_2^c) 
+ 
\overline{\mu}(A \cap E_1^c \cap E_2)
+\overline{\mu}(A \cap E_1^c \cap E_2^c). 
\end{split}
\end{equation*}
Since 
\begin{equation*} 
\begin{split} 
(E_1 \cap E_2^c) \cup (E_1^c \cap E_2) \cup (E_1^c \cap E_2^c)
= (E_1 \cap E_2)^c, 
\end{split}
\end{equation*}
$\sigma$-subadditivity of $\overline{\mu}$ yields
\[
\overline{\mu}(A \cap E_1 \cap E_2^c)
+
\overline{\mu}(A \cap E_1^c \cap E_2)
+
\overline{\mu}(A \cap E_1^c \cap E_2^c)
\geq \overline{\mu}(A\cap (E_1 \cap E_2)^c). 
\]
Hence
\[
\overline{\mu}(A) 
\geq \overline{\mu}(A \cap (E_1 \cap E_2))
+ \overline{\mu}(A\cap (E_1 \cap E_2)^c). 
\]
The reverse inequality follows from $\sigma$-subadditivity, proving measurability.  
\end{proof}

\begin{cor}\label{cupmeasurability}
If $E_1, E_2 \in \mathcal{M}^*(\mathbb{R}^d)$, then 
$E_1 \cup E_2 \in \mathcal{M}^*(\mathbb{R}^d)$. 
\end{cor}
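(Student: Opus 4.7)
The plan is to derive closure under finite unions directly from the two immediately preceding results: measurability of complements (Lemma \ref{measurability4complement}) and measurability of pairwise intersections. The bridge between them is the De Morgan identity
\[
E_1 \cup E_2 = (E_1^c \cap E_2^c)^c.
\]
So first I would note that, since $E_1, E_2 \in \mathcal{M}^*(\mathbb{R}^d)$, applying Lemma \ref{measurability4complement} gives $E_1^c, E_2^c \in \mathcal{M}^*(\mathbb{R}^d)$. Next I would invoke the preceding lemma on intersections to conclude that $E_1^c \cap E_2^c \in \mathcal{M}^*(\mathbb{R}^d)$. Finally, a second application of Lemma \ref{measurability4complement} to this intersection yields $(E_1^c \cap E_2^c)^c = E_1 \cup E_2 \in \mathcal{M}^*(\mathbb{R}^d)$, completing the proof.

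There is essentially no obstacle here; the result is a purely formal consequence of the two prior lemmas combined with a set-theoretic identity. The only thing worth verifying in passing is the De Morgan identity itself, which is standard and requires no measure-theoretic input. In particular, one does not need to reopen the Carathéodory testing inequality for $E_1 \cup E_2$ against arbitrary $A \subset \mathbb{R}^d$: all of that work has already been absorbed into the intersection lemma (and into the symmetric role of $E$ and $E^c$ in Carathéodory's condition, which makes Lemma \ref{measurability4complement} automatic). This is why the statement is recorded as a corollary rather than as a standalone lemma.
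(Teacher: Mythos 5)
Your argument is correct and is exactly the route the paper intends: the corollary is stated without proof precisely because it follows from Lemma \ref{measurability4complement} and the intersection lemma via the De Morgan identity $E_1\cup E_2=(E_1^c\cap E_2^c)^c$, just as you wrote. No gaps; nothing further is needed.
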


\begin{lem} \label{additivity} 
Let 
$E_1, E_2 \subset \mathbb{R}^d$ with 
$E_1 \cap E_2=\varnothing$. 
If 
$E_1  \in \mathcal{M}^*(\mathbb{R}^d)$
{\rm(}or $E_2 \in \mathcal{M}^*(\mathbb{R}^d)${\rm)}, then 
\[
\overline{\mu}(E_1 \sqcup E_2) 
= \overline{\mu}(E_1) + 
\overline{\mu}(E_2) 
\]
\end{lem}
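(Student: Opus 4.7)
The plan is to apply the Carathéodory condition directly with the test set chosen to be $A := E_1 \sqcup E_2$ and the splitting set taken to be the measurable one.

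By symmetry we may assume $E_1 \in \mathcal{M}^*(\mathbb{R}^d)$. By the defining property of $\mathcal{M}^*(\mathbb{R}^d)$, applied to this particular $A$,
\[
\overline{\mu}(E_1 \sqcup E_2) = \overline{\mu}\bigl((E_1 \sqcup E_2) \cap E_1\bigr) + \overline{\mu}\bigl((E_1 \sqcup E_2) \cap E_1^c\bigr).
\]
The only genuine step is the elementary set-theoretic identification of the two pieces: because $E_1 \cap E_2 = \varnothing$, one has $(E_1 \sqcup E_2) \cap E_1 = E_1$ and $(E_1 \sqcup E_2) \cap E_1^c = E_2$. Substituting yields exactly
\[
\overline{\mu}(E_1 \sqcup E_2) = \overline{\mu}(E_1) + \overline{\mu}(E_2),
\]
which is the desired equality.

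There is essentially no obstacle here — the lemma is a one-line corollary of the definition of $\mathcal{M}^*(\mathbb{R}^d)$ once one notices that the disjoint union $E_1 \sqcup E_2$ is itself a convenient test set on which $E_1$ performs a \emph{clean} split. Note also that we do not need to appeal to $\sigma$-subadditivity for either direction: the Carathéodory condition is an equality, not merely an inequality, so both $\leq$ and $\geq$ come for free in this single application. The assumption that at least one of $E_1, E_2$ lies in $\mathcal{M}^*(\mathbb{R}^d)$ is essential, since without it the outer measure $\overline{\mu}$ is only $\sigma$-subadditive in general, and strict inequality is possible for arbitrary disjoint pairs.
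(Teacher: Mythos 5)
Your proposal is correct and is essentially identical to the paper's proof: both apply Carath\'eodory's condition with test set $A=E_1\sqcup E_2$ and splitting set $E_1$, then use disjointness to identify $(E_1\sqcup E_2)\cap E_1=E_1$ and $(E_1\sqcup E_2)\cap E_1^c=E_2$. Nothing further is needed.
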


\begin{proof}
Assuming 
$E_1 \in \mathcal{M}^*(\mathbb{R}^d)$, Carath\'eodory's condition gives
\begin{equation*} 
\begin{split} 
\overline{\mu}(E_1 \sqcup E_2) 
& = \overline{\mu}((E_1 \sqcup E_2 )\cap E_1)+
\overline{\mu}((E_1 \sqcup E_2) \cap E_1^c)
  = 
\overline{\mu}(E_1)+
\overline{\mu}(E_2). 
\end{split}
\end{equation*}
\end{proof}

\begin{lem}\label{finite additivity}
If $\{E_n\} \subset \mathcal{M}^*(\mathbb{R}^d)$ 
with 
$E_m \cap E_n =\varnothing$ for $m \neq n$, 
then for any $A \subset \mathbb{R}^d$ and any $N \in \mathbb{N}$, 
\[
\overline{\mu}(A \cap \bigsqcup_{n=1}^N E_n)= \sum_{n=1}^N \overline{\mu}(A \cap E_n). 
\]
\end{lem}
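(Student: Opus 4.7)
The plan is to argue by induction on $N$, using the Carath\'eodory condition with a carefully chosen test set at each step. The base case $N=1$ is a tautology, so the real work is in the inductive step.

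For the inductive step, suppose the identity holds up to $N-1$ and set $F_N:=\bigsqcup_{n=1}^{N}E_n$. The natural move is to apply the measurability of $E_N$ (i.e.\ the Carath\'eodory condition from the definition of $\mathcal{M}^*(\mathbb{R}^d)$) to the test set $A\cap F_N$:
\[
\overline{\mu}(A\cap F_N)
= \overline{\mu}\bigl((A\cap F_N)\cap E_N\bigr)
+ \overline{\mu}\bigl((A\cap F_N)\cap E_N^c\bigr).
\]
Because the $E_n$ are pairwise disjoint, $F_N\cap E_N=E_N$ and $F_N\cap E_N^c=\bigsqcup_{n=1}^{N-1}E_n$, so the right-hand side simplifies to
\[
\overline{\mu}(A\cap E_N)+\overline{\mu}\Bigl(A\cap\bigsqcup_{n=1}^{N-1}E_n\Bigr).
\]
Applying the inductive hypothesis to the second term yields the desired equality.

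There is no real obstacle: the only point requiring care is recognizing that the Carath\'eodory condition should be applied with $E_N$ (not the full union) as the splitter, and to the test set $A\cap F_N$ rather than to $A$ itself. Once that choice is made, the disjointness of the family makes the two intersections collapse to $E_N$ and $\bigsqcup_{n=1}^{N-1}E_n$ automatically, and the induction closes. The finiteness of $N$ is essential here, since no countable additivity is being asserted yet; that stronger statement will require combining this lemma with $\sigma$-subadditivity and a passage to the limit in a subsequent step.
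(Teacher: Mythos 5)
Your proof is correct and is essentially identical to the paper's: both argue by induction on $N$, applying the Carath\'eodory condition of the last set $E_N$ to the test set $A\cap\bigsqcup_{n=1}^{N}E_n$, using disjointness to collapse the two pieces to $A\cap E_N$ and $A\cap\bigsqcup_{n=1}^{N-1}E_n$, and then invoking the inductive hypothesis. No gaps.
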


\begin{proof}
Trivial for $N=1$. 
Assume the identity for $n=k$: i.e. 
$
\overline{\mu}(A \cap \bigsqcup\limits_{n=1}^k E_n)
= \sum\limits_{n=1}^k \overline{\mu}(A \cap E_n)
$. 
Using measurability of $E_{k+1}$ 
and the identity for $n=k$, 
\begin{equation*} 
\begin{split} 
\overline{\mu}(A \cap \bigsqcup\limits_{n=1}^{k+1} E_n)
&=\overline{\mu}((A \cap \bigsqcup\limits_{n=1}^{k+1} E_n)\cap E_{k+1})
+ 
 \overline{\mu}(A \cap \bigsqcup\limits_{n=1}^{k+1} E_n \cap E_{k+1}^c) \\
&=\overline{\mu}(A \cap E_{k+1})
+ 
 \overline{\mu}(A \cap \bigsqcup\limits_{n=1}^{k} E_n) \\
 & =\overline{\mu}(A \cap E_{k+1})
 + \sum_{n=1}^k\overline{\mu}(A \cap E_n)= \sum_{n=1}^{k+1}\overline{\mu}(A \cap E_n). 
\end{split}
\end{equation*}
Therefore, it also holds for $n=k+1$. Hence the lemma holds for any $N \in \mathbb{N}$. 
\end{proof}

\begin{lem} 
If 
$\{E_n\} \subset \mathcal{M}^*(\mathbb{R}^d)$ with 
$E_m \cap E_n = \varnothing$ for $m\neq n$, 
then $\bigsqcup\limits_{n=1}^{\infty}E_n \in \mathcal{M}^*(\mathbb{R}^d)$. 
\end{lem}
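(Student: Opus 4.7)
The plan is to apply Carathéodory's condition to the finite partial unions $F_N:=\bigsqcup_{n=1}^{N} E_n$, pass to the limit $N\to\infty$, and then use $\sigma$-subadditivity to collapse the sum back onto $E:=\bigsqcup_{n=1}^{\infty} E_n$. The reverse inequality $\overline{\mu}(A)\le \overline{\mu}(A\cap E)+\overline{\mu}(A\cap E^c)$ will come for free from $\sigma$-subadditivity, so the real content is the $\ge$ direction.

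First I would invoke Corollary \ref{cupmeasurability} (closure under finite unions, applied inductively) to conclude $F_N\in\mathcal{M}^*(\mathbb{R}^d)$ for every $N$. Carathéodory's condition for $F_N$ then yields, for an arbitrary test set $A\subset\mathbb{R}^d$,
\[
\overline{\mu}(A)=\overline{\mu}(A\cap F_N)+\overline{\mu}(A\cap F_N^c).
\]
Next I would apply Lemma \ref{finite additivity} to the first term to rewrite it as $\sum_{n=1}^N \overline{\mu}(A\cap E_n)$. For the second term, the inclusion $F_N\subset E$ gives $E^c\subset F_N^c$, so monotonicity of the outer measure yields $\overline{\mu}(A\cap F_N^c)\ge \overline{\mu}(A\cap E^c)$. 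Combining these,
\[
\overline{\mu}(A)\ge \sum_{n=1}^{N} \overline{\mu}(A\cap E_n)+\overline{\mu}(A\cap E^c).
\]

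Letting $N\to\infty$ (the right-hand side is monotone in $N$, so the limit exists in $[0,\infty]$) gives
\[
\overline{\mu}(A)\ge \sum_{n=1}^{\infty} \overline{\mu}(A\cap E_n)+\overline{\mu}(A\cap E^c).
\]
Finally, since $A\cap E=\bigsqcup_{n=1}^{\infty}(A\cap E_n)$, $\sigma$-subadditivity of $\overline{\mu}$ bounds $\overline{\mu}(A\cap E)\le \sum_{n=1}^\infty \overline{\mu}(A\cap E_n)$, and chaining this with the previous display gives $\overline{\mu}(A)\ge \overline{\mu}(A\cap E)+\overline{\mu}(A\cap E^c)$. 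The opposite inequality is an instance of $\sigma$-subadditivity, hence equality holds and $E\in\mathcal{M}^*(\mathbb{R}^d)$.

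I do not foresee a genuine obstacle; the one spot that requires a little care is the use of monotonicity to replace $F_N^c$ by $E^c$ before letting $N\to\infty$, since otherwise one would need a downward-continuity statement for $\overline{\mu}$ on the decreasing sequence $\{A\cap F_N^c\}$, which has not yet been established at this point in the paper. Handling the $\overline{\mu}(A\cap E^c)$ term by the simple monotonicity step sidesteps that issue cleanly.
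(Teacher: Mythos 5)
Your proposal is correct and follows essentially the same route as the paper's proof: both apply Carath\'eodory's condition to the finite unions (measurable by Corollary \ref{cupmeasurability}), rewrite the first term via Lemma \ref{finite additivity}, use monotonicity to replace the complement of the finite union by $E^c$ before letting $N\to\infty$, and finish with $\sigma$-subadditivity in both directions. No gaps.
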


\begin{proof}
By Corollary \ref{cupmeasurability} and Carath\'eodory's condition, 
for any $A \subset \mathbb{R}^d$ and $N \in \mathbb{N}$, 
\begin{equation*} 
\begin{split} 
\overline{\mu}(A) 
&
= 
\overline{\mu}\left (A\cap \Bigl (\bigsqcup_{n=1}^NE_n \Bigr ) \right )
+\overline{\mu}\left (A \cap \Bigl (\bigsqcup_{n=1}^NE_n \Bigr )^c\right ) \\ 
& \geq 
\sum_{n=1}^N
\overline{\mu}\left (A\cap E_n \right )
+\overline{\mu}\left (A \cap \Bigl (\bigsqcup_{n=1}^{\infty}E_n \Bigr )^c\right )
\end{split}
\end{equation*}
where we used monotonicity and Lemma \ref{finite additivity}. 
Letting $N \to \infty$ and using $\sigma$-subadditivity gives 
\begin{equation*} 
\begin{split} 
\overline{\mu}(A) 
&
\geq 
\sum_{n=1}^{\infty}\overline{\mu}(A\cap E_n)
+\overline{\mu}\left (A \cap \Bigl (\bigsqcup_{n=1}^{\infty}E_n \Bigr )^c\right ) \\
& \geq 
\overline{\mu}\left (A\cap \Bigl (\bigsqcup_{n=1}^{\infty}E_n \Bigr ) \right )
+\overline{\mu}\left (A \cap \Bigl (\bigsqcup_{n=1}^{\infty}E_n \Bigr )^c\right ), 
\end{split}
\end{equation*}
and the reverse inequality follows from subadditivity. Hence 
$\bigsqcup\limits_{n=1}^{\infty}E_n \in \mathcal{M}^*(\mathbb{R}^d)$. 
\end{proof}

\begin{lem}
If $\{E_n\} \subset \mathcal{M}^*(\mathbb{R}^d)$, 
then $\bigcup\limits_{n=1}^{\infty}E_n \in \mathcal{M}^*(\mathbb{R}^d)$. 
\end{lem}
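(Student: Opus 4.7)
The plan is to reduce the general countable union to the disjoint countable union already handled in the previous lemma, via the standard \emph{disjointification trick}.

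First, I would set $F_1 := E_1$ and for $n \geq 2$ define
\[
F_n := E_n \setminus \bigcup_{k=1}^{n-1} E_k
= E_n \cap \Bigl(\bigcup_{k=1}^{n-1} E_k\Bigr)^c.
\]
An elementary set-theoretic check shows that the $F_n$ are pairwise disjoint and that $\bigsqcup_{n=1}^{\infty} F_n = \bigcup_{n=1}^{\infty} E_n$.

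Next I would verify that each $F_n \in \mathcal{M}^*(\mathbb{R}^d)$. By an easy induction on $n$ using Corollary \ref{cupmeasurability}, the finite union $\bigcup_{k=1}^{n-1} E_k$ is measurable. By Lemma \ref{measurability4complement} its complement is measurable, and since the intersection of two measurable sets is measurable (this was proved just before Corollary \ref{cupmeasurability}), we get $F_n \in \mathcal{M}^*(\mathbb{R}^d)$.

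Finally, I would invoke the previous lemma (closure under countable \emph{disjoint} unions of measurable sets) to conclude that
\[
\bigcup_{n=1}^{\infty} E_n = \bigsqcup_{n=1}^{\infty} F_n \in \mathcal{M}^*(\mathbb{R}^d),
\]
completing the proof. There is no real obstacle here: the only nontrivial ingredient is the preceding disjoint-union lemma, and the rest is bookkeeping with the closure properties already established (complements, finite intersections, finite unions). This step then completes the verification that $\mathcal{M}^*(\mathbb{R}^d)$ satisfies axiom (iii) of the $\sigma$-algebra definition, and hence proves Proposition \ref{sigma-algebra}.
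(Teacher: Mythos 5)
Your proposal is correct and is essentially identical to the paper's own proof: the same disjointification $F_1=E_1$, $F_n=E_n\cap\bigl(\bigcup_{k=1}^{n-1}E_k\bigr)^c$, followed by the preceding lemma on countable disjoint unions. You merely spell out the measurability of each $F_n$ (via complements and finite intersections/unions) in more detail than the paper does.
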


\begin{proof}
Define
\begin{equation*} 
F_1:= E_1, \qquad 
F_{n}:=E_n \cap \Bigl ({\bigcup\limits_{k=1}^{n-1}E_k} \Bigr )^c
\quad (n \geq 2). 
\end{equation*}
Then $\{F_n\}$ are pairwise disjoint measurable sets and 
\[
\bigcup_{n=1}^{\infty}E_n=
\bigsqcup_{n=1}^{\infty}F_n \in \mathcal{M}^*(\mathbb{R}^d). 
\]
\end{proof}

\begin{prop}[$\sigma$-additivity of $\mu$]
\label{sigma}
If 
$\{E_n\} \subset \mathcal{M}^*(\mathbb{R}^d)$ with 
$E_m \cap E_n = \varnothing\ (m\neq n)$, then 
\[
\mu\left (\bigsqcup_{n=1}^{\infty}E_n \right )=\sum_{n=1}^{\infty}\mu(E_n)
\]
\end{prop}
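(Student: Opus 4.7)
The plan is to deduce $\sigma$-additivity by squeezing the outer measure of the disjoint union between two matching bounds obtained from finite additivity on the one hand and $\sigma$-subadditivity on the other. The previous lemma already guarantees that $\bigsqcup_{n=1}^{\infty} E_n \in \mathcal{M}^*(\mathbb{R}^d)$, so we only need to verify the numerical identity for $\mu$.

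First I would invoke Lemma \ref{finite additivity} with the choice $A = \mathbb{R}^d$, which yields the finite additivity
\[
\overline{\mu}\Bigl(\bigsqcup_{n=1}^{N} E_n\Bigr) = \sum_{n=1}^{N} \overline{\mu}(E_n) = \sum_{n=1}^{N} \mu(E_n)
\]
for every $N\in\mathbb{N}$. Since $\bigsqcup_{n=1}^{N} E_n \subset \bigsqcup_{n=1}^{\infty} E_n$, monotonicity of $\overline{\mu}$ gives
\[
\overline{\mu}\Bigl(\bigsqcup_{n=1}^{\infty} E_n\Bigr) \geq \sum_{n=1}^{N} \mu(E_n),
\]
and letting $N\to\infty$ produces the lower bound $\overline{\mu}(\bigsqcup_{n=1}^{\infty}E_n) \geq \sum_{n=1}^{\infty} \mu(E_n)$.

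Next I would apply $\sigma$-subadditivity to obtain the matching upper bound $\overline{\mu}(\bigsqcup_{n=1}^{\infty} E_n) \leq \sum_{n=1}^{\infty} \overline{\mu}(E_n) = \sum_{n=1}^{\infty} \mu(E_n)$. Combining the two inequalities yields equality of the outer measure with the series. Because $\bigsqcup_{n=1}^{\infty}E_n$ is itself measurable by the preceding lemma, its outer and inner measures coincide (taking the convention $\mu = \infty$ when the common value is not finite), so $\mu(\bigsqcup_{n=1}^{\infty}E_n) = \sum_{n=1}^{\infty}\mu(E_n)$, which is the claim.

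I expect this proof to be essentially mechanical, since the two structural ingredients --- finite additivity on measurable disjoint unions and $\sigma$-subadditivity of the outer measure --- are already in hand. The only minor point requiring care is the bookkeeping when $\sum_{n=1}^{\infty}\mu(E_n) = \infty$: the lower bound derived above forces $\overline{\mu}(\bigsqcup_{n=1}^{\infty} E_n) = \infty$, in which case the definition of $\mu$ on $\mathcal{M}^*(\mathbb{R}^d)\setminus\mathcal{M}(\mathbb{R}^d)$ makes both sides equal to $\infty$ by convention, so no separate argument is needed.
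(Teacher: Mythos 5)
Your proposal is correct and follows essentially the same route as the paper: finite additivity on the first $N$ sets, monotonicity, the limit $N\to\infty$, the reverse inequality from $\sigma$-subadditivity, and finally measurability of the union to replace $\overline{\mu}$ by $\mu$. The only cosmetic difference is that you obtain finite additivity by specializing Lemma \ref{finite additivity} to $A=\mathbb{R}^d$, while the paper cites Corollary \ref{cupmeasurability} together with Lemma \ref{additivity}; these yield the same fact.
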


\begin{proof}
For any $N \in \mathbb{N}$, 
by monotonicity, 
Corollary \ref{cupmeasurability} and Lemma \ref{additivity}, 
\begin{equation*} 
\begin{split} 
\overline{\mu}\Bigl (\bigsqcup_{n=1}^{\infty}E_n \Bigr)
\geq \overline{\mu} \Bigl (\bigsqcup_{n=1}^{N}E_n \Bigr )
= \sum_{n=1}^{N}\overline{\mu}(E_n). 
\end{split}
\end{equation*}
Letting $N \to \infty$, we have 
\[
\overline{\mu}\Bigl (\bigsqcup_{n=1}^{\infty}E_n \Bigr)
\geq 
\sum_{n=1}^{\infty}\overline{\mu}(E_n).
\]
the reverse inequality follows from $\sigma$-subadditivity. 
Since all the sets involved are measurable, $\overline{\mu}=\mu$. 
\end{proof}

\begin{prop} \label{continuity}
Let $\{E_n\} \subset \mathcal{M}^*(\mathbb{R}^d)$ with $E_n \uparrow E$. 
Then 
\[
\mu\left (\bigcup_{n=1}^{\infty}E_n \right )=\lim_{n \to \infty}\mu(E_n). 
\]
\end{prop}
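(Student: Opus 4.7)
The plan is to reduce the claim to the $\sigma$-additivity already proved in Proposition \ref{sigma} by a standard disjointification, exactly as in the last lemma of the previous section. First I will set $F_1 := E_1$ and $F_n := E_n \cap E_{n-1}^c$ for $n \geq 2$. Since $\mathcal{M}^*(\mathbb{R}^d)$ is a $\sigma$-algebra by Proposition \ref{sigma-algebra}, each $F_n$ is measurable, and the $F_n$ are pairwise disjoint. Because $\{E_n\}$ is increasing, the telescoping identities
\[
E_N = \bigsqcup_{n=1}^N F_n, \qquad \bigcup_{n=1}^\infty E_n = \bigsqcup_{n=1}^\infty F_n
\]
follow directly.

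Second, I will apply Proposition \ref{sigma} to each of these disjoint unions. Writing $E := \bigcup_{n=1}^\infty E_n$, this yields
\[
\mu(E) \;=\; \sum_{n=1}^\infty \mu(F_n) \;=\; \lim_{N \to \infty} \sum_{n=1}^N \mu(F_n) \;=\; \lim_{N \to \infty} \mu(E_N),
\]
where the middle equality is simply the definition of a sum of nonnegative terms. Note that $E \in \mathcal{M}^*(\mathbb{R}^d)$ is guaranteed by the same $\sigma$-algebra property, so the left-hand side is well defined.

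The only delicate point, and the one I expect to require care in the write-up, is the case when $\mu(E_{n_0}) = \infty$ for some $n_0$. The chain of equalities above still makes sense in $[0,\infty]$ under the convention extending $\mu$ to $\mathcal{M}^*(\mathbb{R}^d)\setminus\mathcal{M}(\mathbb{R}^d)$, but to be safe I will dispatch this case separately: by monotonicity of $\overline{\mu}$, $\mu(E) \geq \overline{\mu}(E_{n_0}) = \infty$ and $\mu(E_n) = \infty$ for every $n \geq n_0$, so both sides of the asserted identity equal $\infty$. Apart from this bookkeeping, the proof is a one-line consequence of $\sigma$-additivity after disjointification, and no substantive obstacle is expected.
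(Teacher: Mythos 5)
Your proof is correct and follows essentially the same route as the paper: the same disjointification $F_1:=E_1$, $F_n:=E_n\cap E_{n-1}^c$, followed by an appeal to the $\sigma$-additivity of Proposition \ref{sigma}. The only (minor) difference is that you identify the partial sums directly as $\mu(E_N)$ via finite additivity instead of telescoping the differences $\mu(E_n)-\mu(E_{n-1})$ as the paper does, which is in fact slightly cleaner since it avoids subtraction and lets you dispatch the case $\mu(E_{n_0})=\infty$ explicitly.
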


\begin{proof}
Set 
\begin{equation*} 
\begin{cases}
F_1:= E_1 \\[1ex]
F_{n}:=E_n \cap E_{n-1}^c & (n \geq 2). 
\end{cases}
\end{equation*}
Then 
$\{F_n\}$ are disjoint and Proposition \ref{sigma} yields 
\begin{equation*} 
\begin{split} 
{\mu}\left (\bigcup_{n=1}^{\infty}E_n \right ) 
& = {\mu}\left (\bigsqcup_{n=1}^{\infty}F_n \right ) 
=\sum_{n=1}^{\infty}\overline{\mu}(F_n) \\
& = {\mu}(F_1)+\sum_{n=2}^{\infty}{\mu}(E_n \cap E_{n-1}^c) \\
& = \lim_{N \to \infty}
\left ( 
{\mu}(E_1)+\sum_{n=2}^{N}
\Bigl ( {\mu}(E_n)-{\mu}( E_{n-1}) \Bigr )
\right )= \lim_{N \to \infty}{\mu}(E_N). 
\end{split}
\end{equation*}
\end{proof}

\section{Structure Theorem for Measurable Sets}

In Lebesgue measure theory it is well known that every Lebesgue measurable set can be written as a Borel set together with a Lebesgue null set. For completeness, we record the standard statements here; see, e.g., \cite[Theorem 1.19]{Folland}, 
\cite[Theorem 11]{r-f}, 
\cite[Theorem 3.36]{s-k}, 
\cite[Corollary 3.5]{s-s}.

\begin{defn}[$\sigma$-algebra generated by a family]
For a family $\mathcal{A}$ of sets, 
$\sigma(\mathcal{A})$ denotes the smallest $\sigma$-algebra containing
$\mathcal{A}:$
\[ \sigma(\mathcal{A}) 
=\bigcap\{\mathcal{F} \ | \ \mathcal{A} \subset \mathcal{F}, \ 
\text{$\mathcal{F}$ a $\sigma$-algebra}\}. \]

\end{defn}

\begin{defn}[Borel $\sigma$-algebra]
Let $\mathcal{O}(\mathbb{R}^d)$ be the open sets 
and $\mathcal{F}(\mathbb{R}^d)$ the closed sets. 
Define 
\[
\mathcal{B}(\mathbb{R}^d):=\sigma(\mathcal{O}(\mathbb{R}^d))
=\sigma(\mathcal{F}(\mathbb{R}^d)). 
\]
Elements of 
$\mathcal{B}(\mathbb{R}^d)$ are called \emph{Borel sets}. 
\end{defn}

\begin{defn}[$G_{\delta}$ and $F_{\sigma}$ sets]
A set $G$ is called 
a $G_{\delta}$ if it can be written 
as a countable intersection of open sets{\rm:} 
\[
G=\bigcap\limits_{n=1}^{\infty}\mathcal{O}_n, \ 
\mathcal{O}_n \in \mathcal{O}(\mathbb{R}^d). 
\]
A set $F$ is called an $F_{\sigma}$ 
if it is a countable union of closed sets{\rm:}
\[
F=\bigcup\limits_{n=1}^{\infty}F_n,  \ 
F_n \in \mathcal{F}(\mathbb{R}^d). 
\]
\end{defn}

\begin{lem}\label{regu}
If $E \in \mathcal{M}(\mathbb{R}^d)$, 
then there exist a $G_{\delta}$ set $G$ and a null set 
$\mathcal{N}$ with 
\[
E=G \cap \mathcal{N}^c
\]
\end{lem}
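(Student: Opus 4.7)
The plan is to approximate $E$ from outside by a nested sequence of open sets whose measures shrink to $\mu(E)$, intersect them to form a $G_\delta$ set $G\supset E$ of the same measure as $E$, and then show the difference $\mathcal{N}:=G\setminus E$ is a null set.

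First I would invoke Proposition \ref{outer measure1}: since $\overline{\mu}(E)=\mu(E)<\infty$, for each $n\in\mathbb{N}$ I can pick an open set $\mathcal{O}_n\supset E$ with $\mu(\mathcal{O}_n)<\mu(E)+1/n$. Setting $G:=\bigcap_{n=1}^\infty \mathcal{O}_n$ produces a $G_\delta$ set with $E\subset G\subset \mathcal{O}_n$ for every $n$. Monotonicity of the outer measure gives $\mu(E)\le \overline{\mu}(G)\le \mu(\mathcal{O}_n)<\mu(E)+1/n$, so letting $n\to\infty$ yields $\overline{\mu}(G)=\mu(E)$.

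Next I would verify that $G$ is measurable and of finite measure, so that the subtraction $\mu(G)-\mu(E)$ makes sense. Each $\mathcal{O}_n$ lies in $\mathcal{M}^*(\mathbb{R}^d)$ by Lemma \ref{CC4open}, and since $\mathcal{M}^*(\mathbb{R}^d)$ is a $\sigma$-algebra (Proposition \ref{sigma-algebra}), the countable intersection $G$ is measurable; combined with $\overline{\mu}(G)=\mu(E)<\infty$, this gives $G\in\mathcal{M}(\mathbb{R}^d)$. Because both $E$ and $G$ are measurable, so is $\mathcal{N}:=G\cap E^c=G\setminus E$, and Lemma \ref{additivity} applied to the disjoint decomposition $G=E\sqcup\mathcal{N}$ gives
\[
\mu(G)=\mu(E)+\overline{\mu}(\mathcal{N}),
\]
forcing $\overline{\mu}(\mathcal{N})=0$.

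Finally, I would upgrade $\overline{\mu}(\mathcal{N})=0$ to the statement that $\mathcal{N}\in\mathcal{N}(\mathbb{R}^d)$: by Proposition \ref{outer measure1}, for every $\varepsilon>0$ one can enclose $\mathcal{N}$ in an open set of measure $<\varepsilon$, and any open set is a countable (a.e.) disjoint union of open intervals with total $|\cdot|$ equal to its measure (Proposition from Section 2), yielding the covering condition in the definition of a Lebesgue null set. To conclude, since $E\subset G$ we have $G\cap \mathcal{N}^c=G\setminus(G\setminus E)=E$, exactly the required representation. I expect no substantive obstacle here; the only delicate point is keeping straight that \emph{$G$ itself must be measurable} before writing $\mu(G)=\mu(E)+\mu(\mathcal{N})$, which is precisely why the $\sigma$-algebra structure established in the previous section is invoked.
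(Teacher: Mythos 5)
Your proposal is correct and follows essentially the same route as the paper: outer approximation by open sets $\mathcal{O}_n\supset E$ with $\mu(\mathcal{O}_n)<\mu(E)+1/n$, forming the $G_\delta$ set $G=\bigcap_n\mathcal{O}_n$, showing $\mathcal{N}=G\cap E^c$ has outer measure zero, and concluding $E=G\cap\mathcal{N}^c$. The only difference is bookkeeping --- the paper extracts $\mu(G\cap E^c)\le\mu(\mathcal{O}_n)-\mu(E)$ directly from Carath\'eodory's condition applied to $E$ with test set $G$, whereas you first establish $\overline{\mu}(G)=\mu(E)$ and then invoke Lemma \ref{additivity}; your extra care in checking measurability of $G$ and in passing from $\overline{\mu}(\mathcal{N})=0$ to the covering definition of a null set is if anything more explicit than the paper's own argument.
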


\begin{proof}
Assume $\mu(E)<\infty$. 
Since $E$ can be approximated from outside by open sets, 
for each $n \in \mathbb{N}$ there exists 
$\mathcal{O}_n \in \mathcal{O}(\mathbb{R}^d)$ such that  
\[ 
E \subset  \mathcal{O}_n,\quad 
  \mu(\mathcal{O}_n) < \mu(E) + \cfrac{1}{n}. \]
Let 
$
G:=\bigcap\limits_{n=1}^{\infty} \mathcal{O}_n
$. Then,  
since $E\in\mathcal{M}(\mathbb{R}^d)$ and $G$ is measurable, Carath\'eodory's condition, together with monotonicity and $E\subset G$, gives
\[ \mu(G \cap E^c) 
 = \mu(G) - \mu(G\cap E) \leq \mu(\mathcal{O}_n) -\mu(E) < \cfrac{1}{n}.  \]
Since $n$ is arbitrary,  
$\mathcal{N}:=G \cap E^c$ is null, and 
\[ E = G \cap E = 
G \cap (G^c \cup E)=G\cap (G\cap E^c)^c = G\cap \mathcal{N}^c. \]
\end{proof}

\begin{cor}
If $E \in \mathcal{M}(\mathbb{R}^d)$, 
then there exist an $F_{\sigma}$ set $F$
and a null set $\mathcal{N}'$ such that 
\[
E=F \cup \mathcal{N}'. 
\]
\end{cor}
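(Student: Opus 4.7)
The plan is to mirror the proof of Lemma \ref{regu} dually: where that argument approximated $E$ from the outside by open sets via outer measure, I would approximate $E$ from the inside by closed sets via inner measure. Since $E \in \mathcal{M}(\mathbb{R}^d)$ forces $\underline{\mu}(E) = \mu(E) < \infty$, Proposition \ref{inner measure1} (or Corollary \ref{inner measure2}) supplies, for each $n \in \mathbb{N}$, a closed set $F_n$ with $F_n \subset E$ a.e.\ and $\overline{\mu}(F_n) > \mu(E) - 1/n$.

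The key difficulty is that the inner-measure characterization only gives a.e.\ containment, whereas the conclusion $E = F \cup \mathcal{N}'$ requires $F \subset E$ literally. To bridge this, I would cover each null set $N_n := F_n \setminus E$ by a $G_\delta$ null set $H_n$: from the definition of Lebesgue null sets, for every $k$ enclose $N_n$ in an open set of measure less than $1/k$, and intersect these open covers over $k$. Replacing $F_n$ by
\[
F_n' := F_n \cap H_n^c
\]
then produces a set that (i) genuinely lies inside $E$, since any $x \in F_n'$ is in $F_n$ but not in $H_n \supset N_n$, and hence in $F_n \setminus N_n \subset E$; (ii) is $F_\sigma$, as the intersection of the closed set $F_n$ with the $F_\sigma$ set $H_n^c$ (writing $H_n^c = \bigcup_m C_m$ with $C_m$ closed gives $F_n' = \bigcup_m(F_n \cap C_m)$); and (iii) has the same outer measure as $F_n$, since $F_n \setminus F_n' \subset H_n$ is null.

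Setting $F := \bigcup_{n=1}^{\infty} F_n'$ then yields a countable union of $F_\sigma$ sets, hence itself $F_\sigma$, with $F \subset E$ and $\overline{\mu}(F) \ge \overline{\mu}(F_n') = \overline{\mu}(F_n) \to \mu(E)$. Since $F$ lies in $E \in \mathcal{M}(\mathbb{R}^d)$, monotonicity and $\sigma$-additivity (Proposition \ref{sigma}, applied to $E = F \sqcup (E \setminus F)$) force $\mu(E \setminus F) = 0$, so $\mathcal{N}' := E \setminus F$ is Lebesgue null and $E = F \cup \mathcal{N}'$. The main obstacle in this argument is the single upgrade from a.e.\ containment to literal set-theoretic containment while preserving the $F_\sigma$ character; once the $G_\delta$ null cover handles that, the remainder is a direct dual of the proof of Lemma \ref{regu}.
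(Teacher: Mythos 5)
Your proof is correct, but it takes a genuinely different route from the paper. The paper obtains the $F_\sigma$ representation by pure complementation: it applies Lemma \ref{regu} to $E^c$ to get $E^c=G\cap(\mathcal{N}')^c$ with $G$ a $G_\delta$, and then De Morgan turns $G^c$ into an $F_\sigma$ with $E=G^c\cup\mathcal{N}'$. That argument is two lines long, but it quietly requires Lemma \ref{regu} for $E^c$, which is a set of \emph{infinite} measure when $E\in\mathcal{M}(\mathbb{R}^d)$, whereas Lemma \ref{regu} is stated (and proved) for finite-measure sets; some extra localization is really needed there. Your argument instead dualizes the \emph{proof} of Lemma \ref{regu}: inner approximation by closed sets via Proposition \ref{inner measure1}, an explicit upgrade from a.e.\ containment to genuine containment by excising a $G_\delta$ null cover of $F_n\setminus E$ (which is exactly the right device --- $F_n\cap H_n^c$ stays $F_\sigma$ and loses no outer measure), and then $\sigma$-additivity to conclude $\mu(E\setminus F)=0$. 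What your approach buys is that it stays entirely within the finite-measure setting where all the cited results apply verbatim, at the cost of being longer and of having to confront the a.e.-versus-literal containment issue head on; what the paper's approach buys is brevity and the aesthetic symmetry between the $G_\delta$ and $F_\sigma$ statements, at the cost of the unstated extension of Lemma \ref{regu} to infinite-measure sets. Both are valid; your handling of the a.e.\ containment is careful and all the measure bookkeeping (subadditivity for $\overline{\mu}(F_n)\le\overline{\mu}(F_n')$, Lemma \ref{additivity} with the measurable set $F$ to get $\overline{\mu}(E\setminus F)=0$) checks out.
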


\begin{proof}
Since $E$ is measurable, so is 
$E^c$; 
applying Lemma \ref{regu} to $E^c$ gives $E^c = G \cap (\mathcal{N}')^c$.
Hence 
\[
E= (G \cap (\mathcal{N}')^c)^c=G^c \cup \mathcal{N}'. 
\]
If $G=\bigcap\limits_{n=1}^{\infty}\mathcal{O}_n$, 
then by De Morgan, 
$G^c = \left (\bigcap\limits_{n=1}^{\infty}\mathcal{O}_n \right )^c
= \bigcup\limits_{n=1}^{\infty}\mathcal{O}_n^c$
, a countable union of closed sets, 
hence $G^c$ is $F_{\sigma}$. 
\end{proof}

\begin{prop}
\[
\mathcal{M}^*(\mathbb{R}^d)= \sigma\bigl(\mathcal{B}(\mathbb{R}^d) \cup \mathcal{N}(\mathbb{R}^d)\bigr).
\]
\end{prop}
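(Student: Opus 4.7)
The plan is to prove the two inclusions separately, exploiting that $\mathcal{M}^*(\mathbb{R}^d)$ has already been shown to be a $\sigma$-algebra (Proposition \ref{sigma-algebra}).

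For the inclusion $\sigma(\mathcal{B}(\mathbb{R}^d)\cup\mathcal{N}(\mathbb{R}^d))\subseteq\mathcal{M}^*(\mathbb{R}^d)$, it suffices to show that $\mathcal{M}^*(\mathbb{R}^d)$ contains both $\mathcal{B}(\mathbb{R}^d)$ and $\mathcal{N}(\mathbb{R}^d)$; then minimality of $\sigma(\mathcal{B}\cup\mathcal{N})$ does the rest. The containment of null sets is Lemma \ref{measurability4empty}. For Borel sets, the key observation is that Lemma \ref{CC4open} is precisely Carath\'eodory's condition for an arbitrary open set $\mathcal{O}$ tested against an arbitrary $E\subset\mathbb{R}^d$. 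Hence $\mathcal{O}(\mathbb{R}^d)\subseteq\mathcal{M}^*(\mathbb{R}^d)$, and since $\mathcal{M}^*(\mathbb{R}^d)$ is a $\sigma$-algebra, $\mathcal{B}(\mathbb{R}^d)=\sigma(\mathcal{O}(\mathbb{R}^d))\subseteq\mathcal{M}^*(\mathbb{R}^d)$.

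For the reverse inclusion, I would take $E\in\mathcal{M}^*(\mathbb{R}^d)$ and reduce to the finite-measure case handled by the corollary to Lemma \ref{regu}. Choose bounded open sets $B_n := \{x\in\mathbb{R}^d: |x|<n\}$ with $B_n\uparrow\mathbb{R}^d$, and set $E_n := E\cap B_n$. Since $\mathcal{M}^*(\mathbb{R}^d)$ is closed under finite intersection, $E_n\in\mathcal{M}^*(\mathbb{R}^d)$, and monotonicity of $\overline{\mu}$ together with $\mu(B_n)<\infty$ yields $\overline{\mu}(E_n)<\infty$. Then Proposition \ref{CC} upgrades $E_n$ from $\mathcal{M}^*(\mathbb{R}^d)$ to $\mathcal{M}(\mathbb{R}^d)$, so the corollary to Lemma \ref{regu} applies and gives a decomposition $E_n = F_n\cup\mathcal{N}_n$ with $F_n$ an $F_\sigma$ (hence Borel) and $\mathcal{N}_n\in\mathcal{N}(\mathbb{R}^d)$.

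Finally, since $E = \bigcup_{n=1}^\infty E_n = \left(\bigcup_{n=1}^\infty F_n\right)\cup\left(\bigcup_{n=1}^\infty \mathcal{N}_n\right)$, the first union lies in $\mathcal{B}(\mathbb{R}^d)$ and the second lies in $\mathcal{N}(\mathbb{R}^d)$ by $\sigma$-subadditivity of $\overline{\mu}$, so $E\in\sigma(\mathcal{B}(\mathbb{R}^d)\cup\mathcal{N}(\mathbb{R}^d))$. The main (and essentially only nontrivial) obstacle is the passage from infinite to finite measure; the crucial point is that intersecting with a bounded open set not only preserves Carath\'eodory measurability but also forces finiteness of outer measure, which is exactly what Proposition \ref{CC} needs in order to promote the set into $\mathcal{M}(\mathbb{R}^d)$ where Lemma \ref{regu} and its corollary can be invoked. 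Everything else amounts to a $\sigma$-algebra bookkeeping argument.
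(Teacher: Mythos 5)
Your proposal is correct and follows essentially the same route as the paper: truncate $E$ by bounded balls, pass to $\mathcal{M}(\mathbb{R}^d)$, apply the structure result of Lemma \ref{regu} (you invoke its $F_\sigma\cup\text{null}$ corollary rather than the $G_\delta\cap\text{null-complement}$ form), and reassemble using the $\sigma$-algebra property. Your explicit justification that $E\cap\{|x|<n\}\in\mathcal{M}(\mathbb{R}^d)$ --- finite outer measure plus the Carath\'eodory condition via Proposition \ref{CC} --- is precisely the step the paper leaves implicit, so the two arguments coincide in substance.
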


\begin{proof}
Since 
$\mathcal{B}(\mathbb{R}^d)\subset \mathcal{M}^*(\mathbb{R}^d)$ and 
$\mathcal{N}(\mathbb{R}^d)\subset \mathcal{M}^*(\mathbb{R}^d)$, 
\[
\sigma\bigl(\mathcal{B}(\mathbb{R}^d)\cup\mathcal{N}(\mathbb{R}^d)\bigr)
\subset \mathcal{M}^*(\mathbb{R}^d).
\]

Conversely, if $E \in \mathcal{M}^*(\mathbb{R}^d)$, then  
\[ E=\bigcup_{k=1}^\infty (E\cap \{|x| \leq k\}).  \]
Each $E\cap \{|x| \leq k\}$ lies in $\mathcal{M}(\mathbb{R}^d)$, 
so by Lemma \ref{regu},
\[
E\cap \{|x| \leq k\}
=G_k\cap {\mathcal{N}}_k^c\quad(G_k\text{ a } G_\delta,\ \mathcal{N}_k\in\mathcal{N}(\mathbb{R}^d)). 
\]
Hence $E\cap \{|x|\leq k\}\in 
\sigma\bigl(\mathcal{B}(\mathbb{R}^d)\cup\mathcal{N}(\mathbb{R}^d)\bigr)$, 
and 
by the $\sigma$-algebra property,  
$E \in \sigma\bigl(\mathcal{B}(\mathbb{R}^d)\cup\mathcal{N}(\mathbb{R}^d)\bigr)$.
Therefore, 
$\mathcal{M}^*(\mathbb{R}^d) \subset 
\sigma\bigl(\mathcal{B}(\mathbb{R}^d)\cup\mathcal{N}(\mathbb{R}^d)\bigr)$.
\end{proof}

\section{Step-Function Approximation}

Although the classical Lebesgue integral is defined via approximation by simple functions, our definition did not adopt this as a starting point. Nevertheless, the corresponding approximation property follows from the behavior of the lower integral. 
We first recall the standard notions of convergence used in Lebesgue theory.

\begin{defn}[$\mathcal{M}^1$-convergence] 
Let 
$\{f_n\} \subset \mathcal{M}^1(\mathbb{R}^d)$ and 
$f \in \mathcal{M}^1(\mathbb{R}^d)$. 
We say
$\{f_n\}$ 
\emph{converges to $f$ in $\mathcal{M}^1$} if 
\[
\int_{{\mathbb{R}^d}}|f(x)-f_n(x)|\,d_{\mathrm{M}}x \to 0\quad (n \to \infty)
\]
and write 
$f_n \to f \text{\ \rm in }\mathcal{M}^1(\mathbb{R}^d)$. 
\end{defn}

\begin{defn}[Convergence in measure] 
For each $n \in \mathbb{N}$, let 
$f_n: \mathbb{R}^d \to \mathbb{R}$ and 
$f:\mathbb{R}^d \to \mathbb{R}$. 
We say \emph{$\{f_n\}$ converges to $f$ in measure} if 
\[
\overline{\mu}(\{x\in {\mathbb{R}^d} : |f(x)-f_n(x)|
\geq \varepsilon\})\to 0\quad (n \to \infty)
\]
for every $\varepsilon>0$, and write 
$
f_n \to f \text{\ \rm in measure }
$. 
\end{defn}

\begin{defn}[Almost-everywhere convergence]
For each $n \in \mathbb{N}$, let 
$f_n: \mathbb{R}^d \to \mathbb{R}$ and 
$f:\mathbb{R}^d \to \mathbb{R}$. 
If there exists 
$\mathcal{N} \in \mathcal{N}({\mathbb{R}^d})$ such that 
for all $x \in \mathcal{N}^c$, 
\[
f_n(x) \to f(x)\quad (n \to \infty), 
\]
then $\{f_n\}$ \emph{converges to $f$ 
almost everywhere}, 
written
\[
f_n \to f \text{\ \rm{a.e.}}\quad \text{or} \quad
f_n \to f \text{\ \rm{a.e.}}\ \mathbb{R}^d. 
\]
\end{defn}

\begin{defn}[Simple and step functions]
Consider functions $f:\mathbb{R}^d \to \mathbb{R}$ of the form
\[
f(x):=\sum_{m=1}^N \alpha_m \chi_{E_m}(x), 
\qquad N\in\mathbb{N},\ \alpha_m\in\mathbb{R}.
\]
If $E_m\in \mathcal{M}^*(\mathbb{R}^d)$ for each $m$, we call $f$ a \emph{simple function}. 
If, in addition, $E_m = I_m \in \mathscr{I}(\mathbb{R}^d)$ for each $m$, we call $f$ a \emph{step function}.
\end{defn}

\begin{prop}[Approximation by step functions]
\label{approximation by step functions}
Let $f \in \mathcal{M}^1(\mathbb{R}^d)$ with $f \geq 0$. 
Then there exists a monotonically increasing sequence of 
nonnegative step functions 
$\{\varphi_n\}$ such that,
as $n \to \infty$, 
\[
\varphi_n \uparrow f\text{\ \rm a.e. on }{\mathbb{R}^d}, \quad 
\varphi_n \to f\ \text{\ \rm in }\mathcal{M}^1(\mathbb{R}^d). 
\]
\end{prop}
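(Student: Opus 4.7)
The plan is to build the $\varphi_n$ as finite truncations of lower Riemann-type sums associated with partitions supplied by the essential Riemann criterion (Theorem~\ref{Riemann conditions}), force monotonicity by taking pointwise maxima, and then derive both modes of convergence from the single control $\int\varphi_n\,d_{\mathrm{M}}x\to\int f\,d_{\mathrm{M}}x$. Concretely, for each $n$ I select $P_n=\{I^n_m\}\in\Pi(\mathbb{R}^d)$ with $U(f,P_n)-L(f,P_n)<1/n$; writing $\alpha^n_m:=\essinf_{I^n_m}f$ and using that the nonnegative series $\sum_m\alpha^n_m|I^n_m|=L(f,P_n)<\infty$ converges, I pick $N_n$ with $\sum_{m>N_n}\alpha^n_m|I^n_m|<1/n$ and set $\tilde{\varphi}_n:=\sum_{m=1}^{N_n}\alpha^n_m\chi_{I^n_m}$. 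This is a finite nonnegative step function with $\tilde{\varphi}_n\le f$ a.e.\ and $\int\tilde{\varphi}_n\,d_{\mathrm{M}}x>\int f\,d_{\mathrm{M}}x-2/n$. Finally I set $\varphi_n:=\max(\tilde{\varphi}_1,\ldots,\tilde{\varphi}_n)$: since finite intersections and set-differences of $d$-dimensional open intervals decompose into finitely many open $d$-intervals, a pointwise maximum of finitely many step functions is again a step function on a common finite refinement, and the resulting sequence $(\varphi_n)$ is nondecreasing, dominated by $f$ a.e., and satisfies $\int\varphi_n\,d_{\mathrm{M}}x\to\int f\,d_{\mathrm{M}}x$.

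For the $\mathcal{M}^1$-convergence, I first check that $f-\varphi_n\in\mathcal{M}^1(\mathbb{R}^d)$ by the Riemann criterion: refining any good partition for $f$ by the (finite) partition on which $\varphi_n$ is piecewise constant leaves the per-cell oscillation of $f-\varphi_n$ equal to that of $f$, so the oscillation sum can be made arbitrarily small. Additivity applied to $f=\varphi_n+(f-\varphi_n)$ then gives
\[
\int_{\mathbb{R}^d}|f-\varphi_n|\,d_{\mathrm{M}}x=\int_{\mathbb{R}^d}(f-\varphi_n)\,d_{\mathrm{M}}x=\int_{\mathbb{R}^d}f\,d_{\mathrm{M}}x-\int_{\mathbb{R}^d}\varphi_n\,d_{\mathrm{M}}x\longrightarrow 0.
\]
For the almost-everywhere statement, the monotone pointwise limit $\varphi^*:=\lim_n\varphi_n\le f$ a.e.\ exists, and a Chebyshev-type estimate -- obtained by applying monotonicity of the upper integral to the a.e.\ inequality $\chi_{\{f-\varphi_n>1/k\}}\le k(f-\varphi_n)$ -- yields $\overline{\mu}(\{f-\varphi_n>1/k\})\le k\int(f-\varphi_n)\,d_{\mathrm{M}}x\to 0$ for every $k$. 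Since $\{f-\varphi^*>1/k\}\subset\{f-\varphi_n>1/k\}$ for all $n$, each of these sets has outer measure zero, and $\sigma$-subadditivity then gives $\overline{\mu}(\{f\ne\varphi^*\})=0$, so $\varphi_n\uparrow f$ a.e.

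The main obstacle is the bookkeeping required to stay strictly within the paper's definition of a step function (a \emph{finite} linear combination of open-interval indicators) while simultaneously securing monotonicity and $\int\varphi_n\to\int f$. Using pointwise maxima of finite truncations sidesteps the awkward need to have the $P_n$ nested as refinements, but it does rest on the elementary-yet-explicit geometric fact that finitely many $d$-dimensional open intervals admit a common finite refinement into open $d$-intervals. Once this geometric point is granted, both convergence statements follow quickly from additivity and the Chebyshev inequality, without invoking the monotone or dominated convergence theorems.
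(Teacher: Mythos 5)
Your construction of the approximants is essentially the paper's (finite truncations of lower sums coming from partitions with small oscillation), but your route to the two convergence statements is genuinely different and, in places, more careful. The paper secures monotonicity by choosing the partitions as nested refinements (Corollary \ref{monotone approach}) and then obtains a.e.\ convergence indirectly, via two auxiliary lemmas: $\mathcal{M}^1$-convergence implies convergence in measure, which in turn yields an a.e.\ convergent subsequence. You instead force monotonicity by taking pointwise maxima of the truncated step functions and then exploit that monotonicity to get a.e.\ convergence of the \emph{whole} sequence directly, through a Chebyshev estimate $\overline{\mu}(\{f-\varphi_n>1/k\})\le k\int(f-\varphi_n)\,d_{\mathrm{M}}$ and $\sigma$-subadditivity; you also verify explicitly that $f-\varphi_n\in\mathcal{M}^1(\mathbb{R}^d)$ via the essential Riemann criterion before invoking additivity, a point the paper passes over silently. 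Your approach buys a cleaner, self-contained argument (no detour through convergence in measure, no subsequence extraction), at the price of the geometric bookkeeping for maxima: strictly speaking, a pointwise maximum of step functions is a finite combination of indicators of \emph{open} intervals only off the null set of grid boundaries (set differences of open boxes are open boxes only up to their boundaries), so you should either redefine $\varphi_n$ on the common grid refinement or state the identity a.e.; since the paper's own conventions and the statement of the proposition are all up to null sets, this is a harmless adjustment rather than a gap. The same a.e.\ caveat applies when you refine a partition of $\mathbb{R}^d$ by the finite grid (the complement of the grid region must itself be decomposed, a.e., into countably many open intervals), but this is exactly what the open set decomposition lemma provides.
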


We prove the proposition through several lemmas.

\begin{lem}[Existence of step functions converging in 
$\mathcal{M}^1$]
If $f \in \mathcal{M}^1({\mathbb{R}^d})$ with $f \geq 0$, then there exists a monotonically increasing sequence of 
step functions $\{\varphi_n\}$ with 
$\varphi_n \to f$ {\rm in} $\mathcal{M}^1(\mathbb{R}^d)$.  
\end{lem}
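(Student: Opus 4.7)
The strategy is to produce step-function approximants by truncating the countable-index partitions supplied by Corollary \ref{monotone approach}, and then to enforce monotonicity by a terminal max operation.

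I begin by applying Corollary \ref{monotone approach} to obtain partitions $\{I_m^n\}_{m=1}^\infty \in \Pi(\mathbb{R}^d)$ with $L(f,\{I_m^n\}) \uparrow \int_{\mathbb{R}^d} f\,d_{\mathrm{M}}$ and $U(f,\{I_m^n\}) \downarrow \int_{\mathbb{R}^d} f\,d_{\mathrm{M}}$; in particular $(U-L)(f,\{I_m^n\}) \to 0$. The open-set decomposition lemma allows me to take every $I_m^n$ to be a bounded open rectangle. Write $\alpha_m^n := \essinf_{I_m^n} f$ and $\beta_m^n := \esssup_{I_m^n} f$. Since $\sum_m \beta_m^n|I_m^n| = U(f,\{I_m^n\}) < \infty$, I pick $K_n \in \mathbb{N}$ large enough that $\sum_{m>K_n}\beta_m^n|I_m^n| < 1/n$, and set the truncated step function
\[
\tilde\varphi_n := \sum_{m=1}^{K_n}\alpha_m^n\,\chi_{I_m^n},
\]
a finite sum over bounded open rectangles with $0 \le \tilde\varphi_n \le f$ a.e.

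The crux is to verify that the nonnegative difference $g_n := f - \tilde\varphi_n$ lies in $\mathcal{M}^1(\mathbb{R}^d)$, so that $\int g_n\,d_{\mathrm{M}}$ is well defined. Given $\varepsilon>0$, Theorem \ref{Riemann conditions} applied to $f$ produces a partition $Q$ with $(U-L)(f,Q)<\varepsilon$; refining $Q$ by the finitely many boundary hyperplanes of $I_1^n,\dots,I_{K_n}^n$ yields a partition $Q' \in \Pi(\mathbb{R}^d)$ on each cell of which $\tilde\varphi_n$ is constant. Consequently $\esssup_J g_n - \essinf_J g_n = \esssup_J f - \essinf_J f$ on every cell $J$ of $Q'$, and since refinement decreases $U-L$ for $f$, one gets $(U-L)(g_n,Q') \le (U-L)(f,Q) < \varepsilon$; finiteness of the upper integral of $g_n$ follows from that of $f$. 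Theorem \ref{Riemann conditions} therefore gives $g_n \in \mathcal{M}^1(\mathbb{R}^d)$. By additivity and the fact that $\int \chi_{I_m^n}\,d_{\mathrm{M}}=|I_m^n|$,
\[
0 \le \int g_n\,d_{\mathrm{M}} = \int_{\mathbb{R}^d} f\,d_{\mathrm{M}} - \sum_{m=1}^{K_n}\alpha_m^n|I_m^n| \le (U-L)(f,\{I_m^n\}) + \sum_{m>K_n}\beta_m^n|I_m^n| \to 0,
\]
which yields $\tilde\varphi_n \to f$ in $\mathcal{M}^1(\mathbb{R}^d)$.

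To upgrade to a monotone sequence, I set $\varphi_n := \max(\tilde\varphi_1,\dots,\tilde\varphi_n)$. The common refinement by all boundary hyperplanes of the finitely many rectangles defining $\tilde\varphi_1,\dots,\tilde\varphi_n$ partitions the bounded union of their supports into finitely many open rectangles on each of which every $\tilde\varphi_k$---and hence the maximum---is constant; outside this union the maximum is $0$. Thus $\varphi_n$ is a step function. Monotonicity $\varphi_n \le \varphi_{n+1}$ is immediate, and from $\tilde\varphi_n \le \varphi_n \le f$ a.e.\ one obtains $0 \le f-\varphi_n \le f-\tilde\varphi_n$ a.e., so (applying the same Riemann-criterion argument to $f-\varphi_n$) $\int|f-\varphi_n|\,d_{\mathrm{M}} \le \int(f-\tilde\varphi_n)\,d_{\mathrm{M}} \to 0$. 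The main obstacle is the middle step: proving $f-\tilde\varphi_n \in \mathcal{M}^1$ so that additivity can legitimately be invoked; the countable-to-finite truncation is essential precisely because it reduces $\tilde\varphi_n$ to a function that is piecewise-constant on \emph{finitely} many cells, so a single hyperplane refinement suffices to match the essential Riemann criterion for $f$.
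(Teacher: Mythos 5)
Your proof is correct, and its core device --- truncating the lower Riemann sum of a near-optimal partition to finitely many rectangles and integrating the resulting step function --- is the same as the paper's. Where you genuinely diverge is in the two delicate points that the paper leaves implicit. First, the paper simply asserts that $\varphi_n\to f$ in $\mathcal{M}^1$ once the finite lower sum is within $1/n$ of the integral; this tacitly uses $\int (f-\varphi_n)\,d_{\mathrm{M}}=\int f\,d_{\mathrm{M}}-\int\varphi_n\,d_{\mathrm{M}}$, which presupposes $f-\varphi_n\in\mathcal{M}^1(\mathbb{R}^d)$. Your hyperplane-refinement argument --- cut a partition witnessing the essential Riemann criterion for $f$ along the finitely many faces of $I_1^n,\dots,I_{K_n}^n$, note that subtracting a cellwise constant preserves the essential oscillation on each cell, and invoke monotonicity of $U-L$ under refinement --- supplies exactly the missing justification and is a worthwhile addition. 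Second, for monotonicity the paper arranges nested refinements of the partitions so that $\varphi_{n+1}\ge\varphi_n$ a.e.\ (essential infima increase under refinement), whereas you take $\varphi_n:=\max(\tilde\varphi_1,\dots,\tilde\varphi_n)$; both work, and yours sidesteps having to check that the truncation indices can be chosen compatibly with the refinements. The only cosmetic caveat is that the pointwise maximum of step functions coincides with a step function in the paper's sense only off the finitely many cutting hyperplanes (a Lebesgue null set), so strictly you should replace $\max(\tilde\varphi_1,\dots,\tilde\varphi_n)$ by its cellwise-constant representative on the common refinement; since every subsequent statement is an a.e.\ or integral statement, this changes nothing of substance.
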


\begin{proof}
By the definition of the lower integral, for each $n \in \mathbb{N}$
there exists a partition $\{I^n_m\}_{m=1}^{\infty} \in \Pi({\mathbb{R}^d})$ and 
an integer $N(n) \in \mathbb{N}$
such that (cf.\ the proof of Proposition \ref{inner measure1} if needed) 
\[
\underline{\int_{\mathbb{R}^d}} f(x)\,d_{\mathrm{M}}x -\frac{1}{n} 
\leq \sum_{m=1}^{N(n)}\alpha_m^n|I_m^n| 
\leq  \underline{\int_{\mathbb{R}^d}} f(x)\,d_{\mathrm{M}}x,
\]
where $\alpha_m^n:=\essinf\limits_{x \in I_m^n} f(x)$, and 
\[
\sum_{m=1}^{N(n)}\alpha_m^n|I_m^n|
= \int_{{\mathbb{R}^d}} \sum_{m=1}^{N(n)} \alpha_m^n\chi_{I_m^n}(x)\,d_{\mathrm{M}}x.  
\]
Setting 
$\varphi_n:=\sum\limits_{m=1}^{N(n)}\alpha_m^n\chi_{I_m^n}$
yields the convergence in $\mathcal{M}^1(\mathbb{R}^d)$. 

Moreover, by choosing the partitions $\{I_m^n\}$ as in 
Corollary \ref{monotone approach} so that
\[ 
L(f,\{I_m^n\})\uparrow \int_{\mathbb{R}^d} f\,d_{\mathrm{M}}
\]
and arranging (e.g.\ by passing to common refinements) that each $\{I_m^{n+1}\}$ refines $\{I_m^{n}\}$, the associated step functions
$\varphi_n(x)=\sum\limits_{m=1}^{N(n)}\alpha_m^n\chi_{I_m^n}(x)$ satisfy
$\varphi_n \le \varphi_{n+1}$ a.e., hence are monotonically increasing.
\end{proof}

\begin{lem}
If $f_n \to f$ in $\mathcal{M}^1(\mathbb{R}^d)$, 
then $f_n \to f$ {\rm in measure}. 
\end{lem}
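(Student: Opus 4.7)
The plan is to apply a Markov/Chebyshev-type inequality directly at the level of upper integrals. Fix $\varepsilon>0$ and set $E_n:=\{x\in\mathbb{R}^d : |f(x)-f_n(x)|\geq \varepsilon\}$. The only analytic input needed is the pointwise bound
\[
\varepsilon\,\chi_{E_n}(x)\leq |f(x)-f_n(x)|\qquad(x\in\mathbb{R}^d),
\]
which is immediate from the definition of $E_n$ (both sides are zero off $E_n$, and on $E_n$ the right-hand side is at least $\varepsilon$). The goal is to convert this pointwise inequality into a bound on $\overline{\mu}(E_n)$.

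First I would translate the pointwise bound into an inequality between upper Riemann-type sums. For any $\{I_m\}\in\Pi(\mathbb{R}^d)$, monotonicity of the essential supremum gives
\[
\varepsilon\,\esssup_{x\in I_m}\chi_{E_n}(x)\leq \esssup_{x\in I_m}|f(x)-f_n(x)|,
\]
and summing over $m$ yields $\varepsilon\, U(\chi_{E_n},\{I_m\})\leq U(|f-f_n|,\{I_m\})$. Taking the infimum over $\{I_m\}\in\Pi(\mathbb{R}^d)$ on both sides, and using the trivial homogeneity of $U$ in a nonnegative scalar, I obtain
\[
\varepsilon\,\overline{\mu}(E_n)=\overline{\int_{\mathbb{R}^d}}\varepsilon\,\chi_{E_n}\,d_{\mathrm{M}}x\leq \overline{\int_{\mathbb{R}^d}}|f(x)-f_n(x)|\,d_{\mathrm{M}}x=\int_{\mathbb{R}^d}|f(x)-f_n(x)|\,d_{\mathrm{M}}x,
\]
where the last equality uses $|f-f_n|\in\mathcal{M}^1(\mathbb{R}^d)$, which is implicit in the very formulation of $\mathcal{M}^1$-convergence.

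By the hypothesis $f_n\to f$ in $\mathcal{M}^1(\mathbb{R}^d)$, the right-hand side tends to $0$ as $n\to\infty$, hence $\overline{\mu}(E_n)\to 0$. Since $\varepsilon>0$ was arbitrary, this is precisely convergence in measure. I do not anticipate any serious obstacle: the argument is just Markov's inequality. The only subtlety worth flagging is that monotonicity must be invoked at the level of the upper integral $\overline{\int}$ (comparing $U$-sums term-by-term and then taking the infimum), rather than via the previously established monotonicity of $\int$ for $\mathcal{M}^1$-functions --- the latter would presuppose $\chi_{E_n}\in\mathcal{M}^1(\mathbb{R}^d)$, which in general we neither have nor need.
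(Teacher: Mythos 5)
Your proposal is correct and follows essentially the same route as the paper: a Markov/Chebyshev-type bound $\varepsilon\,\overline{\mu}(E_n)\leq \overline{\int_{\mathbb{R}^d}}|f-f_n|\,d_{\mathrm{M}}x$ obtained by monotonicity of the upper integral, followed by letting $n\to\infty$. Your extra care in deriving the inequality term-by-term at the level of $U$-sums (rather than invoking monotonicity of $\int$ for $\mathcal{M}^1$-functions, which would wrongly presuppose $\chi_{E_n}\in\mathcal{M}^1(\mathbb{R}^d)$) is exactly the right precaution and matches what the paper's argument implicitly does.
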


\begin{proof}
Fix $\varepsilon>0$ and set  
\[
E_{n}:=\{x \in {\mathbb{R}^d} \ :\ |f(x)-f_n(x)|> \varepsilon\}
\]
For all $x \in \mathbb{R}^d$, 
\[
|f(x)-f_n(x)| 
\geq \chi_{E_n}(x) \cdot |f(x)-f_n(x)|. 
\]
Taking upper integrals,
\begin{equation*}
\begin{split}
\int_{\mathbb{R}^d} |f-f_n|\,d_{\mathrm{M}} 
=\overline{\int_{\mathbb{R}^d}} |f-f_n|\,d_{\mathrm{M}}
 \geq \overline{\int_{\mathbb{R}^d}} |f-f_n|\chi_{E_{n}}\,d_{\mathrm{M}} 
 \geq \varepsilon \overline{\mu}(E_{n}). 
\end{split}
\end{equation*}
Hence
\begin{equation*}
\begin{split}
\overline{\mu}(E_n)
\leq \frac{1}{\varepsilon}\int_{\mathbb{R}^d} |f-f_n|\,d_{\mathrm{M}}
 \to 0 \ (n \to \infty), 
\end{split}
\end{equation*}
so $f_n \to f$ in measure. 
\end{proof}

\begin{lem}
If $\{f_n\} \to f$ {\rm in measure}, then there exists a subsequence 
$\{f_{n(k)}\}$  
that converges to $f$ almost everywhere.
\end{lem}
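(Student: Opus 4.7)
The plan is to extract a rapidly convergent subsequence via a standard diagonal/Borel--Cantelli argument, then show that the set of points where convergence fails is null.

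First I would use convergence in measure to select the subsequence. For each $k\in\mathbb{N}$, by the definition of convergence in measure applied with $\varepsilon = 2^{-k}$, one can choose an index $n(k)$ such that
\[
\overline{\mu}\bigl(\{x\in\mathbb{R}^d : |f(x)-f_{n(k)}(x)| \geq 2^{-k}\}\bigr) < 2^{-k}.
\]
I would insist that $n(k)$ is strictly increasing (choose each $n(k) > n(k-1)$, which is possible since the above inequality holds for all sufficiently large indices). Denote $E_k := \{|f-f_{n(k)}|\geq 2^{-k}\}$.

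Next I would define the \emph{tail unions} $F_j := \bigcup_{k\geq j} E_k$ and the exceptional set $N := \bigcap_{j=1}^{\infty} F_j$. By $\sigma$-subadditivity of $\overline{\mu}$,
\[
\overline{\mu}(F_j) \leq \sum_{k\geq j}\overline{\mu}(E_k) < \sum_{k\geq j} 2^{-k} = 2^{-j+1}.
\]
By monotonicity, $\overline{\mu}(N) \leq \overline{\mu}(F_j) < 2^{-j+1}$ for every $j$, hence $\overline{\mu}(N)=0$. Via Corollary~\ref{outer measure2}, this places $N$ in $\mathcal{N}(\mathbb{R}^d)$.

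Finally I would verify pointwise convergence off $N$. For any $x \notin N$, there exists $j$ with $x \notin F_j$, so $x \notin E_k$ for every $k\geq j$, which means $|f(x)-f_{n(k)}(x)| < 2^{-k}$ for all $k\geq j$. Letting $k\to\infty$ yields $f_{n(k)}(x)\to f(x)$, so $f_{n(k)}\to f$ a.e. No genuine obstacle arises here; the only subtle point is ensuring $n(k)$ can be chosen strictly increasing and recognizing that a set of vanishing outer measure is null in the sense already defined in the paper, both of which are immediate from the definitions and from the representation of $\overline{\mu}$ established earlier.
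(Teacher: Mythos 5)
Your argument is correct and is essentially the same as the paper's: both select $n(k)$ so that the bad sets $E_k=\{|f-f_{n(k)}|\geq 2^{-k}\}$ have outer measure below $2^{-k}$, bound the tail unions by $\sigma$-subadditivity, conclude that the limsup set is null, and deduce pointwise convergence off it. No gaps; the remarks on strict monotonicity of $n(k)$ and on vanishing outer measure implying nullity are exactly the points the paper handles implicitly.
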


\begin{proof}
From $f_n \to f$ in measure, for each $k \in \mathbb{N}$, 
there is 
$n(k) \in \mathbb{N}$ such that 
\[
\overline{\mu}(\{x \in {\mathbb{R}^d} \ : \ |f(x)-f_n(x)| >2^{-k}\}) < 2^{-k} \ \ 
(\forall n \geq n(k)). 
\]
Choose $n(1) < n(2) <\cdots$ and set  
\[
\mathcal{N}_k:=\{
x \in {\mathbb{R}^d} \ : \ |f(x) -f_{n(k)}(x)| > 2^{-k}
\}, \ \  
\mathcal{N}:=\limsup_{k \to \infty}\mathcal{N}_k
=\bigcap_{\ell=1}^{\infty} \bigcup_{k=\ell}^{\infty}\mathcal{N}_k
\]
Then 
\begin{equation*} 
\overline{\mu}(\mathcal{N}) 
\leq 
\overline{\mu}\Bigl (\,\bigcup_{k={\ell}}^{\infty}\mathcal{N}_k \,\Bigr )
\leq \sum_{k=\ell}^{\infty}\overline{\mu}(\mathcal{N}_k)
= \frac{1}{2^{\ell-1}} \to 0\quad  (\ell \to \infty), 
\end{equation*}
so $\overline{\mu}(\mathcal{N})=0$. 
If $x \not \in \mathcal{N}$, then 
$x \in \bigcup\limits_{\ell=1}^{\infty} \bigcap\limits_{k=\ell}^{\infty}\mathcal{N}_k^c$; 
hence for $k \geq \ell$, 
$|f(x) - f_{n(k)}(x)| \leq 2^{-k}$, and thus 
$f_{n(k)}(x) \to f(x)\  (k \to \infty)$. 
\end{proof}

\section{Nonnegative Measurable Functions}

Since the Mimura integral was introduced as an extension of the Riemann integral, 
we would like it to coincide with the \emph{measure of the subgraph}
\begin{equation} \label{subgraph}
G(f):=\{(x,t) \in \mathbb{R}^{d+1}: x \in \mathbb{R}^d,\ 0 < t < f(x)\}. 
\end{equation}
Accordingly, we focus on nonnegative functions $f\geq 0$ 
for which the measure of this set is defined.

\begin{defn}[Subgraph set of $f$]
For $f \geq 0$, the set defined in \eqref{subgraph} is called 
the \emph{subgraph} of $f$. 
\end{defn}

\begin{prop}\label{measure of subgraph}
We have 
\[ 
G(f) \in \mathcal{M}(\mathbb{R}^{d+1}) \iff f \in \mathcal{M}^1(\mathbb{R}^d)
\] 
and in this case 
\begin{equation} \label{MSG}
\mu(G(f))=\int_{\mathbb{R}^d}f(x)\,d_{\mathrm{M}}x. 
\end{equation}
\end{prop}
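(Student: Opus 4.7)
\medskip

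\noindent\textbf{Proof plan.}
The plan is to sandwich $G(f)$ between the subgraphs of lower and upper step functions associated to a partition of $\mathbb{R}^d$, and then to let the partition vary. The construction couples the $(d{+}1)$-dimensional measure directly to the Mimura upper/lower integrals, so both implications of the biconditional fall out of a single chain of inequalities.

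\medskip

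\noindent\emph{Step 1 (subgraph of a step function).} For any $\{I_m\}\in\Pi(\mathbb{R}^d)$ and nonnegative coefficients $\{\gamma_m\}$, the step function $\psi=\sum_m \gamma_m \chi_{I_m}$ has subgraph
\[
G(\psi)=\bigsqcup_{m:\,\gamma_m>0} I_m\times(0,\gamma_m),
\]
which is a countable disjoint union of open $(d{+}1)$-intervals, hence open in $\mathbb{R}^{d+1}$. By the open-set proposition of \S 2, $G(\psi)\in\mathcal{M}^*(\mathbb{R}^{d+1})$, and when $\sum\gamma_m|I_m|<\infty$ one has $G(\psi)\in\mathcal{M}(\mathbb{R}^{d+1})$ with
\[
\mu\bigl(G(\psi)\bigr)=\sum_m \gamma_m|I_m|=\int_{\mathbb{R}^d}\psi\,d_{\mathrm M}x.
\]

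\medskip

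\noindent\emph{Step 2 (sandwiching $G(f)$).} Fix an arbitrary $\{I_m\}\in\Pi(\mathbb{R}^d)$ and set $\alpha_m=\essinf_{I_m}f$, $\beta_m=\esssup_{I_m}f$. Let $\varphi_\ell=\sum_m \alpha_m\chi_{I_m}$ and $\varphi_u=\sum_m\beta_m\chi_{I_m}$. By Definition~\ref{ess_inf+sup}, there is a $d$-dimensional null set $\mathcal{N}\subset\mathbb{R}^d$ off which $\varphi_\ell\le f\le\varphi_u$. The product $\mathcal{N}\times\mathbb{R}=\bigcup_{k=1}^\infty \mathcal{N}\times(-k,k)$ is null in $\mathbb{R}^{d+1}$ (covering $\mathcal{N}$ in $\mathbb{R}^d$ by intervals of total $d$-measure $<\varepsilon$ yields a $(d{+}1)$-cover of total measure $<2k\varepsilon$), so
\[
G(\varphi_\ell)\subset G(f)\cup\mathcal{N}_1,\qquad G(f)\subset G(\varphi_u)\cup\mathcal{N}_2,
\]
with $\mathcal{N}_1,\mathcal{N}_2\in\mathcal{N}(\mathbb{R}^{d+1})$.

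\medskip

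\noindent\emph{Step 3 (chain of inequalities).} Since inner and outer measures are monotone under inclusion and insensitive to null perturbations (using Lemma~\ref{relation inner and outer} for the inner side and $\sigma$-subadditivity for the outer), Step 1 yields
\[
L(f,\{I_m\})=\mu\bigl(G(\varphi_\ell)\bigr)\le\underline{\mu}\bigl(G(f)\bigr)\le\overline{\mu}\bigl(G(f)\bigr)\le\mu\bigl(G(\varphi_u)\bigr)=U(f,\{I_m\}).
\]
Taking the supremum over $\{I_m\}\in\Pi(\mathbb{R}^d)$ on the left and the infimum on the right gives
\[
\underline{\int_{\mathbb{R}^d}}f\,d_{\mathrm M}x \;\le\; \underline{\mu}\bigl(G(f)\bigr)\;\le\;\overline{\mu}\bigl(G(f)\bigr)\;\le\;\overline{\int_{\mathbb{R}^d}}f\,d_{\mathrm M}x.
\]

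\medskip

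\noindent\emph{Step 4 (both implications).} From this sandwich, if $f\in\mathcal{M}^1(\mathbb{R}^d)$ then the outer terms agree at a common finite value, forcing $\underline{\mu}(G(f))=\overline{\mu}(G(f))<\infty$, i.e.\ $G(f)\in\mathcal{M}(\mathbb{R}^{d+1})$, and \eqref{MSG} holds. Conversely, if $G(f)\in\mathcal{M}(\mathbb{R}^{d+1})$ then the inner terms agree finitely, forcing equality throughout, whence $f\in\mathcal{M}^1(\mathbb{R}^d)$ with the same common value.

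\medskip

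\noindent\emph{Main obstacle.} The routine parts --- that the subgraph of a step function is open, and the final sandwich --- are transparent. The delicate point is Step 2: the containments hold only modulo the a.e.\ failure of $\alpha_m\le f\le\beta_m$, and one must check that the resulting exceptional set in $\mathbb{R}^{d+1}$ (a $d$-dimensional null set crossed with a vertical slab of possibly unbounded height) is genuinely $(d{+}1)$-null, and that both $\underline{\mu}$ and $\overline{\mu}$ absorb such null perturbations. This is where the slight asymmetry between the two measures (inner vs.\ outer) must be handled with care, and it is the step I would write out in greatest detail.
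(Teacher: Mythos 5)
Your Steps 1--3 are sound, and the forward implication is fine: the sandwich $L(f,\{I_m\})=\mu(G(\varphi_\ell))\le\underline{\mu}(G(f))\le\overline{\mu}(G(f))\le\mu(G(\varphi_u))=U(f,\{I_m\})$, with the null-set bookkeeping you describe, does give $\underline{\int}f\le\underline{\mu}(G(f))\le\overline{\mu}(G(f))\le\overline{\int}f$, and hence $f\in\mathcal{M}^1(\mathbb{R}^d)\Rightarrow G(f)\in\mathcal{M}(\mathbb{R}^{d+1})$ together with \eqref{MSG}. This is in fact a slightly more direct route than the paper's, which instead invokes the step-function approximation (Proposition \ref{approximation by step functions}) and continuity from below (Proposition \ref{continuity}).

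The converse in your Step 4, however, has a genuine logical gap. In a chain $a\le b\le c\le d$, knowing $b=c<\infty$ does not force $a=d$: the measurability of $G(f)$ gives equality of the two \emph{middle} terms only, and your inequalities leave open the possibility $\underline{\int}f<\mu(G(f))<\overline{\int}f$ with $G(f)$ measurable. To conclude $f\in\mathcal{M}^1(\mathbb{R}^d)$ you need the reverse comparisons, i.e.\ that $\underline{\mu}(G(f))\le\underline{\int}f$ and $\overline{\int}f\le\overline{\mu}(G(f))$ (or some substitute), and this is precisely the nontrivial half. The paper does it by approximating the finite-measure set $G(f)$ from inside and outside by cylinder sets over a \emph{common} base partition, $C=\bigsqcup_m I_m\times(0,\gamma_m)\subset G(f)\subset D=\bigsqcup_m I_m\times(0,\delta_m)$ with $\mu(D)-\mu(C)<\varepsilon$, then observing that the heights are squeezed by the essential bounds ($\gamma_m\le\alpha_m$ and $\beta_m\le\delta_m$), so that $U(f,\{I_m\})-L(f,\{I_m\})\le\mu(D)-\mu(C)<\varepsilon$ and Theorem \ref{Riemann conditions} applies. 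You would need to supply an argument of this kind --- converting arbitrary inner (closed) and outer (open) approximations of $G(f)$ into cylinder approximations with a single base partition --- before the converse goes through; the null-set issue you flagged as the main obstacle in Step 2 is comparatively harmless.
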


\begin{rem}
By Proposition \ref{measure of subgraph}, the integral is determined by the function $f$ and the measure $\mu$. Hence it is also appropriate to write
\[
\int_{\mathbb{R}^d} f(x)\,d_{\mathrm{M}}x
=\int_{\mathbb{R}^d} f(x)\,d\mu(x).
\]
This notation is standard for the Lebesgue integral, and in \S 13 we will show that the Mimura integral is equivalent to the Lebesgue integral.
\end{rem}

\begin{proof} {(i) $f \in \mathcal{M}^1(\mathbb{R}^d) 
\implies G(f) \in \mathcal{M}(\mathbb{R}^{d+1})$}\,:

\vspace{1ex}

There exist increasing step functions 
$
\varphi_n \uparrow f\text{ a.e. on }\mathbb{R}^d$ with 
$\varphi_n \to f\text{ in }\mathcal{M}^1(\mathbb{R}^d)$. 
Fix $n$. Write 
\[ \varphi_n:=\sum\limits_{m=1}^N\alpha_m \chi_{I_m},\ I_m \in \mathscr{I}(\mathbb{R}^d) \]
Then, 
\[
G(\varphi_n)=\bigsqcup_{m=1}^N\{(x,t) \in \mathbb{R}^{d+1}: x \in I_m, 0<t<\alpha_m\}. 
\]
a disjoint union of $(d+1)$-dimensional intervals, so 
\[
\mu(G(\varphi_n))=\sum_{m=1}^N\alpha_m|I_m|
=\int_{\mathbb{R}^d}\varphi_n(x)\,d_{\mathrm{M}}x. 
\]
By Proposition \ref{continuity}
and the fact that
$G(\varphi_n)\uparrow G(f)$ a.e., 
the left-hand side converges to $\mu(G(f))$, 
while $\varphi_n \to f\text{ in }\mathcal{M}^1(\mathbb{R}^d)$ 
yields convergence of the right-hand side to 
$\displaystyle \int_{\mathbb{R}^d}f\,d_{\mathrm{M}}$. 
Letting $n \to \infty$ gives \eqref{MSG}. 

\smallskip

\noindent
{(ii) $G(f) \in \mathcal{M}(\mathbb{R}^{d+1}) \implies f \in \mathcal{M}^1(\mathbb{R}^d)$}\,: 

\vspace{1ex}
Since finite-measure sets can be approximated by disjoint countable unions of rectangles from inside and outside, for any $\varepsilon>0$
there exist
\[ 
C:=\bigsqcup_{m=1}^{\infty}I_m \times (0,\gamma_m), 
\quad 
D:=\bigsqcup_{m=1}^{\infty}I_m \times (0,\delta_m),
\] 
with $\{I_m\} \subset \Pi(\mathbb{R}^d)$, such that 
\[
C \subset G(f) \subset D, \quad  0\leq \mu(D)-\mu(C)<\varepsilon. 
\]
Let $\alpha_m:=\essinf\limits_{x \in I_m}f(x)$ and 
$\beta_m:=\esssup\limits_{x \in I_m}f(x)$, and define  
\[
A:=\bigsqcup_{m=1}^{\infty}I_m \times (0,\alpha_m),
\quad 
B:=\bigsqcup_{m=1}^{\infty}I_m \times (0,\beta_m). 
\]
Then 
$C \subset A \subset G(f) \subset B \subset D$. 
Indeed, if for some $m$ we had $\gamma_m>\alpha_m$, then by the definition of the essential infimum,
for any $t\in(\alpha_m,\gamma_m)$ the set $\{x\in I_m:\ f(x)\le t\}$ has positive measure.
Hence $\{(x,t):x\in I_m,\ 0<t<\gamma_m\}$ contains points with $t\ge f(x)$, so it cannot be
contained in $G(f)$, contradicting $C\subset G(f)$. Thus $\alpha_m\le\gamma_m$ for all $m$,
and similarly $B\subset D$.
By monotonicity, 
\[
\mu(C) \leq \mu(A) \leq \mu(G(f)) \leq \mu(B) \leq \mu(D). 
\]
Hence 
\begin{equation*} 
\begin{split} 
0 \leq 
\mu(B)-\mu(A) 
 = U(f, \{I_m\})-L(f, \{I_m\})
 \leq \mu(D)-\mu(C)<\varepsilon
\end{split}
\end{equation*}
and by Theorem \ref{Riemann conditions} we conclude 
$f \in \mathcal{M}^1(\mathbb{R}^d)$. 
\end{proof}

\begin{defn}[Measurable functions]
We call $f$ \emph{measurable} if $G(f) \in \mathcal{M}^*(\mathbb{R}^{d+1})$, 
and write $f \in \mathcal{M}^0(\mathbb{R}^d)$. 
\end{defn}

The next result shows that 
$\mathcal{M}^0(\mathbb{R}^d)$ 
is closed under limit operations.

\begin{prop}
Let 
$f, g \in \mathcal{M}^0(\mathbb{R}^d)$ with $f, g \geq0$, 
$\{f_n\}\subset \mathcal{M}^0(\mathbb{R}^d)$ with $f_n\geq 0$. 
Then the following functions all belong to 
$\mathcal{M}^0(\mathbb{R}^d)$. 

\begin{itemize}

\item[\rm{(i)}] 
$\displaystyle 
\overline{f}(x):=\sup_{n \in \mathbb{N}}f_n(x),\ 
\underline{f}(x):= \inf_{n \in \mathbb{N}} f_n(x) 
$

\item[\rm{(ii)}] 
$\displaystyle 
\overline{g}(x):=\limsup_{n \to \infty} f_n(x), \ 
\underline{g}(x):=\liminf_{n \to \infty} f_n(x)$

\end{itemize}

\end{prop}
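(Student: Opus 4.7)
The plan is to express the subgraphs of the limit functions directly in terms of the $G(f_n)$ and then invoke the $\sigma$-algebra structure of $\mathcal{M}^*(\mathbb{R}^{d+1})$ from Proposition \ref{sigma-algebra}. Part (ii) will reduce to part (i) via the identities $\limsup_n f_n=\inf_N \sup_{n\geq N}f_n$ and $\liminf_n f_n=\sup_N\inf_{n\geq N}f_n$. For the supremum, the elementary fact that $t<\sup_n f_n(x)$ iff there exists $n$ with $t<f_n(x)$ gives at once
\[
G(\overline{f}) \;=\; \bigcup_{n=1}^{\infty} G(f_n)\ \in\ \mathcal{M}^*(\mathbb{R}^{d+1}),
\]
with no boundary subtleties.

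The infimum is the genuine obstacle: the naive identity $G(\underline{f})=\bigcap_n G(f_n)$ can fail on the graph $\{(x,t): t=\inf_n f_n(x)\}$, which is not a priori a null set. I would work around this with the countable refinement
\[
t<\inf_n f_n(x) \iff \exists\, k\in\mathbb{N}:\ t+\tfrac{1}{k}<f_n(x)\ \forall n,
\]
which is immediate from the definition of infimum. Introducing
\[
H_k(f):=\{(x,t)\in\mathbb{R}^{d+1}:\,0<t,\ t+\tfrac{1}{k}<f(x)\},
\]
this yields the representation
\[
G(\underline{f}) = \bigcup_{k=1}^{\infty}\bigcap_{n=1}^{\infty} H_k(f_n).
\]

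It remains to verify that $H_k(f)\in\mathcal{M}^*(\mathbb{R}^{d+1})$ whenever $G(f)\in\mathcal{M}^*(\mathbb{R}^{d+1})$. I would identify $H_k(f)$ as the translate by $-1/k$ in the last coordinate of $G(f)\cap\{t>1/k\}$. Since the Mimura outer measure is built from open-interval coverings whose volumes are translation invariant, both $\overline{\mu}$ and the Carath\'eodory class $\mathcal{M}^*$ are stable under translations; combined with Lemma \ref{measurability4measurable+open} (intersection with an open set preserves measurability), this gives $H_k(f)\in\mathcal{M}^*$, and Proposition \ref{sigma-algebra} closes the inf case. Part (ii) then follows by two nested applications of (i). The main obstacle throughout is precisely the strict-versus-nonstrict inequality in the definition of $G(f)$: the sup is immediate, while the inf genuinely requires the $1/k$-refinement together with translation invariance of $\mathcal{M}^*$; one could alternatively bypass translations by first establishing the equivalence between $G(f)\in\mathcal{M}^*(\mathbb{R}^{d+1})$ and measurability of all super-level sets $\{f>c\}$ in $\mathbb{R}^d$, but the direct subgraph route is the shortest given the tools already in place.
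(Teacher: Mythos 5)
Your argument is correct, and for the supremum it coincides with the paper's proof: $G(\overline f)=\bigcup_{n}G(f_n)$ together with Proposition \ref{sigma-algebra}. For the infimum, however, you take a genuinely different route, and your extra care addresses a real subtlety: the paper's own proof simply asserts $G(\underline f)=\bigcap_{n}G(f_n)$, which is not an exact identity (take $f_n\equiv 1+1/n$; then $(x,1)$ lies in every $G(f_n)$ but not in $G(\underline f)$). The discrepancy is a subset of the graph $\{(x,t):t=\underline f(x)\}$, which cannot be dismissed as null at this stage without already knowing that $\underline f$ is measurable, so your $1/k$-refinement $G(\underline f)=\bigcup_{k}\bigcap_{n}H_k(f_n)$ is a genuine repair, not just pedantry. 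The price is two auxiliary facts. First, $G(f)\cap\{t>1/k\}\in\mathcal M^*(\mathbb{R}^{d+1})$: for this, cite Lemma \ref{CC4open} (every open set satisfies Carath\'eodory's condition, hence lies in $\mathcal M^*$) together with the closure of $\mathcal M^*$ under finite intersections from \S 6, rather than Lemma \ref{measurability4measurable+open}, which concerns the finite-measure class $\mathcal M$. Second, translation invariance of $\overline\mu$, and hence of the Carath\'eodory class $\mathcal M^*$, which the paper never records; it does follow immediately from Proposition \ref{outer measure1} and Corollary \ref{outer measure2}, since translates of open intervals are open intervals of equal volume and null sets are translation invariant, but you should state it as a short lemma if you write the argument out. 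Part (ii) by iterated sup/inf is the same as in the paper. In short, your proof buys an exact subgraph identity where the paper's identity for the infimum only holds up to a graph set, at the modest cost of a translation-invariance lemma.
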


\begin{proof}
(i) We have 
\begin{equation*} 
\begin{split} 
G(\overline{f})&= \{(x,t) : x \in \mathbb{R}^d, 0 < t < \sup_{n \in \mathbb{N}}f_n(x)\}\\[1ex]
&=\bigcup_{n=1}^{\infty}\{(x,t) : x \in \mathbb{R}^d, 0 < t < f_n(x)\}
= \bigcup_{n=1}^{\infty}G(f_n). \\[1ex]
G(\underline{f})&= \{(x,t) : x \in \mathbb{R}^d, 0 < t < \inf_{n \in \mathbb{N}}f_n(x)\}\\[1ex]
&=\bigcap_{n=1}^{\infty}\{(x,t) : x \in \mathbb{R}^d, 0 < t < f_n(x)\}
= \bigcap_{n=1}^{\infty}G(f_n). 
\end{split}
\end{equation*}
Since each $G(f_n) \in \mathcal{M}^*(\mathbb{R}^{d+1})$ and 
$\mathcal{M}^*(\mathbb{R}^{d+1})$
is a $\sigma$-algebra (Proposition \ref{sigma-algebra}), 
both $G(\overline{f})$ and $G(\underline{f})$ 
belong to $\mathcal{M}^*(\mathbb{R}^{d+1})$, 
hence $\overline{f}, \underline{f} \in \mathcal{M}^0(\mathbb{R}^d)$. \\[1ex]
\noindent 
(ii) Since 
\[
\overline{g}(x)
= \inf_{n \in \mathbb{N}}\sup_{k \geq n}f_k(x), 
\quad 
\underline{g}(x)
= \sup_{n \in \mathbb{N}}\inf_{k \geq n}f_k(x)
\]
the claim follows from (i). 
\end{proof}

\begin{prop}[Approximation by $\mathcal{M}^1(\mathbb{R}^d)$]
\label{approximation by integrable functions}
Let 
$f \in \mathcal{M}^0(\mathbb{R}^d)$ with $f \ge 0$. 
Then there exists $\{f_n\}\subset\mathcal{M}^1(\mathbb{R}^d)$
such that 
$f_n(x)\to f(x)$ a.e.\,$\mathbb{R}^d$. 
\end{prop}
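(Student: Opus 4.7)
The plan is to approximate $f$ by a truncation that is simultaneously cut off in height and restricted to a bounded box, so as to produce a sequence whose subgraphs have finite measure. Concretely, for each $n \in \mathbb{N}$ I would set
\[
f_n(x) := \min\{f(x), n\} \cdot \chi_{(-n,n)^d}(x).
\]
Pointwise convergence is immediate: for any fixed $x \in \mathbb{R}^d$, once $n > \max\{f(x), \lVert x \rVert_\infty\}$ we have $f_n(x) = f(x)$, so $f_n \to f$ everywhere (a fortiori a.e.). The real task is to verify $f_n \in \mathcal{M}^1(\mathbb{R}^d)$.

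By Proposition \ref{measure of subgraph}, this reduces to showing $G(f_n) \in \mathcal{M}(\mathbb{R}^{d+1})$. A direct calculation from the definition of the subgraph gives
\[
G(f_n) = G(f) \cap \bigl((-n,n)^d \times (0,n)\bigr),
\]
since the condition $0 < t < \min\{f(x), n\}$ together with $x \in (-n,n)^d$ is equivalent to $(x,t) \in G(f)$ with $x \in (-n,n)^d$ and $0 < t < n$. The box $(-n,n)^d \times (0,n)$ is a bounded open subset of $\mathbb{R}^{d+1}$, hence lies in $\mathcal{M}(\mathbb{R}^{d+1}) \subset \mathcal{M}^*(\mathbb{R}^{d+1})$, and by hypothesis $G(f) \in \mathcal{M}^*(\mathbb{R}^{d+1})$. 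Since $\mathcal{M}^*(\mathbb{R}^{d+1})$ is a $\sigma$-algebra (Proposition \ref{sigma-algebra}), the intersection $G(f_n)$ belongs to $\mathcal{M}^*(\mathbb{R}^{d+1})$, and by monotonicity of the outer measure,
\[
\overline{\mu}(G(f_n)) \le \mu\bigl((-n,n)^d \times (0,n)\bigr) = (2n)^d \cdot n < \infty.
\]
Proposition \ref{CC} then upgrades the Carath\'eodory condition plus finite outer measure to $G(f_n) \in \mathcal{M}(\mathbb{R}^{d+1})$, and Proposition \ref{measure of subgraph} yields $f_n \in \mathcal{M}^1(\mathbb{R}^d)$.

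I expect no genuine obstacle here; the argument is a routine bookkeeping once the two-sided truncation is in place. The only point that required a moment's thought was ensuring that \emph{both} cutoffs are needed---height alone or domain alone would not guarantee finite measure of the subgraph---and that the intersection with the bounded box in $\mathbb{R}^{d+1}$ gives exactly the subgraph of $f_n$, so the whole argument can be carried out inside the $\sigma$-algebra $\mathcal{M}^*(\mathbb{R}^{d+1})$ already established.
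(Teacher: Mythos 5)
Your proposal is correct and follows essentially the same route as the paper: the same height-and-domain truncation $f_n=\min\{f,n\}$ cut off to a bounded region, the identity $G(f_n)=G(f)\cap(\text{bounded open box})$, the $\sigma$-algebra and monotonicity arguments, and Proposition \ref{measure of subgraph} to conclude $f_n\in\mathcal{M}^1(\mathbb{R}^d)$. Your explicit appeal to Proposition \ref{CC} to pass from finite outer measure plus measurability to $G(f_n)\in\mathcal{M}(\mathbb{R}^{d+1})$ just spells out a step the paper leaves implicit.
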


\begin{proof}
Set 
$f_n(x):=\min\{f(x),n\}\cdot \chi_{\{|x| < n\}}(x)$. Then
\[
G(f_n)=G(f)\cap\bigl(\{|x| < n\}\times(0,n)\bigr)\in \mathcal{M}^*(\mathbb{R}^{d+1}), 
\]
and 
\[
G(f_n)=G(f)\cap\bigl(\{|x|<n\}\times(0,n)\bigr)\subset (-n,n)^d\times(0,n),
\]
hence
\[
\mu\bigl(G(f_n)\bigr)\le \mu\bigl((-n,n)^d\times(0,n)\bigr)
=2^d n^{d+1}<\infty.  
\]
so $G(f_n)\in\mathcal{M}(\mathbb{R}^{d+1})$.  
By Proposition \ref{measure of subgraph}, 
$f_n\in\mathcal{M}^1(\mathbb{R}^d)$. 
Moreover $G(f_n)\uparrow G(f)$ and $f_n\uparrow f$ a.e., 
hence $\mu(G(f_n))\uparrow\mu(G(f))$. 
\end{proof}

Finally, the following definition assigns an integral to every 
$f \in \mathcal{M}^0(\mathbb{R}^d)$ 
and makes the integral stable under limits. 

\begin{defn}[Integral of a measurable function]
If $f \in \mathcal{M}^0(\mathbb{R}^d)$ but 
$f \notin \mathcal{M}^1(\mathbb{R}^d)$, define  
\[
\int_{\mathbb{R}^d}f(x)\,d_{\mathrm{M}}x:=\infty. 
\]
\end{defn}

\section{Convergence Theorems for Nonnegative Functions}

Since 
$\mathcal{M}^0(\mathbb{R}^d)$ 
is closed under limit operations, 
we obtain the usual convergence theorems and corollaries. 
The following monotone convergence theorem is an immediate 
consequence of the $\sigma$-additivity of Lebesgue measure.

\begin{thm}[Monotone convergence theorem]

\label{monotone}

Assume: 

\begin{itemize}

\item[\rm{(i)}] $\{f_n\} \subset \mathcal{M}^0(\mathbb{R}^d)$. 

\item[\rm{(ii)}] $0 \leq f_n(x) \leq f_{n+1}(x)$ a.e.\ 
for all $n \in \mathbb{N}$.

\item[\rm{(iii)}] $f_n (x) \to f(x)$ a.e.\ as $n \to \infty$. 

\end{itemize}
Then, $f \in \mathcal{M}^0(\mathbb{R}^d)$ and 
\[
\lim_{n \to \infty}\int_{\mathbb{R}^d} f_n(x)\,d_{\mathrm{M}}x
=\int_{\mathbb{R}^d} f(x)\,d_{\mathrm{M}}x. 
\]
\end{thm}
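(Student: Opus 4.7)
The plan is to transfer the convergence from the $f_n$ to their subgraphs $G(f_n)\subset\mathbb{R}^{d+1}$ and then apply continuity of measure from below (Proposition \ref{continuity}). The bridge is Proposition \ref{measure of subgraph}, which identifies $\int f_n\,d_{\mathrm{M}}$ with $\mu(G(f_n))$ whenever $f_n\in\mathcal{M}^1(\mathbb{R}^d)$; for $f_n\in\mathcal{M}^0(\mathbb{R}^d)\setminus\mathcal{M}^1(\mathbb{R}^d)$, both quantities equal $\infty$ by our conventions (the left by definition; the right because $G(f_n)\in\mathcal{M}^*(\mathbb{R}^{d+1})\setminus\mathcal{M}(\mathbb{R}^{d+1})$), so the identity $\int f_n\,d_{\mathrm{M}}=\mu(G(f_n))$ persists in $[0,\infty]$, and the same holds for $f$ once its measurability is known.

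First I would consolidate the three \emph{a.e.} hypotheses into a single Lebesgue null set $N\subset\mathbb{R}^d$ off which $\{f_n(x)\}$ is nonnegative, monotone increasing, and convergent to $f(x)$. Replacing each $f_n$ and $f$ by its restriction to $N^c$ (extended by $0$ on $N$) modifies each function only on a null set of $\mathbb{R}^d$, and the corresponding subgraphs differ only on subsets of $N\times\mathbb{R}$. The key observation is that $N\times\mathbb{R}$ is a Lebesgue null set in $\mathbb{R}^{d+1}$: for each $k$, cover $N$ by open $d$-intervals $\{I_m\}$ with $\sum|I_m|<\varepsilon/(2k)$ so that $\{I_m\times(-k,k)\}$ covers $N\times(-k,k)$ with total $(d{+}1)$-dimensional volume less than $\varepsilon$; writing $N\times\mathbb{R}=\bigcup_k N\times(-k,k)$ exhibits it as a countable union of null sets. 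Hence the modification alters neither $\mu(G(f_n))$ nor $\int f_n\,d_{\mathrm{M}}$, and I may assume $f_n\uparrow f$ pointwise everywhere.

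After this reduction, $G(f_n)\uparrow G(f)$ is an exact equality of increasing sets in $\mathbb{R}^{d+1}$. That $f\in\mathcal{M}^0(\mathbb{R}^d)$ follows immediately from closure of $\mathcal{M}^0(\mathbb{R}^d)$ under countable suprema (established in the preceding proposition applied to the $G(f_n)$), so $G(f)\in\mathcal{M}^*(\mathbb{R}^{d+1})$. Continuity from below then gives $\mu(G(f_n))\uparrow\mu(G(f))$, and translating through Proposition \ref{measure of subgraph} yields $\int f_n\,d_{\mathrm{M}}\uparrow \int f\,d_{\mathrm{M}}$ in $[0,\infty]$. The main technical point, and the only place where a small estimate is required, is the bookkeeping of the null set $N$ one dimension higher: verifying $N\times\mathbb{R}\in\mathcal{N}(\mathbb{R}^{d+1})$ is precisely what legitimizes the reduction to pointwise monotone convergence. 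Once this is in hand, the argument collapses to invoking previously established results, with no further estimation.
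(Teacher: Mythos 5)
Your proof is correct and takes essentially the same route as the paper: identify $\int_{\mathbb{R}^d} f_n\,d_{\mathrm{M}}$ with $\mu(G(f_n))$ via Proposition \ref{measure of subgraph}, note $G(f_n)\uparrow G(f)$, and apply continuity from below (Proposition \ref{continuity}). The only difference is that you spell out the a.e.-to-everywhere reduction (checking $N\times\mathbb{R}\in\mathcal{N}(\mathbb{R}^{d+1})$), the measurability of $f$, and the infinite-integral convention, details the paper's two-line proof leaves implicit.
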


\begin{proof} 
Since 
$
G(f_n)\uparrow G(f)
$, 
it follows from Proposition \ref{continuity} that 
$\mu(G(f))=\lim\limits_{n \to \infty}\mu(G(f_n))$, 
which yields the claim via Proposition \ref{measure of subgraph}. 
\end{proof}

\begin{thm}[Fatou's lemma]
\label{Fatou}

Assume:

\noindent
\begin{itemize}

\item[\rm{(i)}] $\{f_n\} \subset \mathcal{M}^0(\mathbb{R}^d)$. 

\item[\rm{(ii)}] $0 \leq f_n(x)$ a.e. 
for all $n \in \mathbb{N}$.

\end{itemize}
Then, 
\[
\int_{\mathbb{R}^d}\liminf_{n \to \infty}f_n(x)\,d_{\mathrm{M}}x 
\leq \liminf_{n \to \infty}\int_{\mathbb{R}^d} f_n(x)\,d_{\mathrm{M}}x. 
\]
\end{thm}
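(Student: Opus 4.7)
The plan is to reduce Fatou's lemma to the monotone convergence theorem (Theorem~\ref{monotone}) by the standard device of replacing $f_n$ with its running infimum. Concretely, set
\[
g_n(x):=\inf_{k\ge n}f_k(x),\qquad g(x):=\liminf_{n\to\infty}f_n(x)=\sup_{n\in\mathbb{N}}g_n(x).
\]
By the preceding proposition on closure of $\mathcal{M}^0(\mathbb{R}^d)$ under countable infima and suprema, every $g_n$ as well as $g$ lies in $\mathcal{M}^0(\mathbb{R}^d)$. From the definition of $g_n$, we have $0\le g_n\le g_{n+1}$ a.e.\ and $g_n\uparrow g$ a.e. Applying Theorem~\ref{monotone} to $\{g_n\}$ yields
\[
\lim_{n\to\infty}\int_{\mathbb{R}^d}g_n\,d_{\mathrm{M}}
=\int_{\mathbb{R}^d}\liminf_{n\to\infty}f_n\,d_{\mathrm{M}}.
\]

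The second ingredient is monotonicity of the integral extended to $\mathcal{M}^0(\mathbb{R}^d)$. For $f,h\in\mathcal{M}^0(\mathbb{R}^d)$ with $0\le f\le h$ a.e., the set on which $f(x)>h(x)$ is a Lebesgue null set $N\subset\mathbb{R}^d$, and the symmetric difference $G(f)\triangle(G(f)\cap G(h))$ is contained in $N\times(0,\infty)$, which is a null set in $\mathbb{R}^{d+1}$. Hence $\mu(G(f))\le\mu(G(h))$ in $\mathcal{M}^*(\mathbb{R}^{d+1})$, and Proposition~\ref{measure of subgraph} together with the convention $\int f\,d_{\mathrm{M}}=\infty$ when $f\in\mathcal{M}^0\setminus\mathcal{M}^1$ gives $\int f\,d_{\mathrm{M}}\le\int h\,d_{\mathrm{M}}$, with both sides possibly infinite. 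Applied to $g_n\le f_k$ a.e.\ for every $k\ge n$, this yields
\[
\int_{\mathbb{R}^d}g_n\,d_{\mathrm{M}}
\le \inf_{k\ge n}\int_{\mathbb{R}^d}f_k\,d_{\mathrm{M}}.
\]
Taking $n\to\infty$ on both sides and combining with the monotone convergence identity above produces the desired inequality.

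The main potential obstacle is not conceptual but bookkeeping: ensuring that all the a.e.\ qualifiers (for $f_n\ge 0$, for $g_n\uparrow g$, and for $g_n\le f_k$) can be amalgamated into a single Lebesgue null exceptional set so that Theorem~\ref{monotone} applies cleanly, and verifying that the monotonicity step remains valid when one or both integrals equal $\infty$. Both points are handled by the countable union of null sets being null and by the stipulation $\int f\,d_{\mathrm{M}}=\infty$ for $f\in\mathcal{M}^0\setminus\mathcal{M}^1$, so no genuine difficulty arises.
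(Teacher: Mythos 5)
Your proposal is correct and follows essentially the same route as the paper, whose entire proof is the one-line instruction to apply Theorem~\ref{monotone} to $g_n(x):=\inf_{k\ge n}f_k(x)$. You simply spell out the details the paper leaves implicit (closure of $\mathcal{M}^0(\mathbb{R}^d)$ under tail infima, monotonicity of the integral on $\mathcal{M}^0$ via subgraph inclusion up to a null set, and the handling of the a.e.\ qualifiers and infinite values), all of which are sound.
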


\begin{proof}
Apply Theorem \ref{monotone} to
$g_n(x):=\inf\limits_{k \geq n}f_k(x)$. 
\end{proof}

\begin{thm}[Dominated convergence for nonnegative functions]
\label{DCT4NN}
Assume: 

\noindent
\begin{itemize}

\item[\rm{(i)}] $\{f_n\}\subset \mathcal{M}^0(\mathbb{R}^d)$ and $f_n(x)\to f(x)$ a.e.

\item[\rm{(ii)}] $0\leq f_n(x)\leq g(x)$ a.e.\ for all $n \in \mathbb{N}$. 

\item[\rm{(iii)}] $g \in \mathcal{M}^1(\mathbb{R}^d)$. 

\end{itemize}
Then, $\{f_n\}\subset \mathcal{M}^1(\mathbb{R}^d)$, 
$f \in \mathcal{M}^1(\mathbb{R}^d)$ and 
\[
\lim_{n \to \infty}\int_{\mathbb{R}^d} f_n(x)\,d_{\mathrm{M}}x
=\int_{\mathbb{R}^d}f(x)\,d_{\mathrm{M}}x. 
\]

\end{thm}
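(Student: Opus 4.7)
The plan is to sandwich $\int f_n$ between two sequences whose integrals both converge to $\int f$; the lower bound will come directly from Fatou's lemma (Theorem \ref{Fatou}), while the upper bound will come from a downward-monotone control via subgraphs.

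First I would handle the integrability claims. Since $\{f_n\} \subset \mathcal{M}^0(\mathbb{R}^d)$ and $f = \liminf_{n\to\infty} f_n$ a.e., the preceding closure proposition for $\mathcal{M}^0$ yields $f \in \mathcal{M}^0(\mathbb{R}^d)$. From $0 \le f_n \le g$ a.e.\ one has $G(f_n) \subset G(g)$ a.e., so $\mu(G(f_n)) \le \mu(G(g)) < \infty$, and likewise $\mu(G(f))<\infty$. By Proposition \ref{measure of subgraph}, $\{f_n\} \subset \mathcal{M}^1(\mathbb{R}^d)$ and $f \in \mathcal{M}^1(\mathbb{R}^d)$.

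Applying Fatou's lemma to $\{f_n\}$ immediately gives
\[
\int_{\mathbb{R}^d} f \,d_{\mathrm{M}} = \int_{\mathbb{R}^d} \liminf_{n\to\infty} f_n \,d_{\mathrm{M}} \le \liminf_{n\to\infty} \int_{\mathbb{R}^d} f_n \,d_{\mathrm{M}}.
\]
For the reverse inequality, I would set $H_n(x) := \sup_{k \ge n} f_k(x)$. The closure of $\mathcal{M}^0$ under $\sup$ and $\limsup$ gives $H_n \in \mathcal{M}^0(\mathbb{R}^d)$; moreover $H_n \le g$, $H_n \ge f_n$, and $H_n \downarrow f$ a.e. Translating to subgraphs, $G(H_n) \downarrow G(f)$ a.e., with $G(H_1) \subset G(g)$ of finite measure. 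I would deduce the downward continuity $\mu(G(H_n)) \to \mu(G(f))$ by applying the upward continuity of Proposition \ref{continuity} to the increasing sequence $G(g)\setminus G(H_n) \uparrow G(g)\setminus G(f)$ and subtracting from $\mu(G(g))$. Via Proposition \ref{measure of subgraph} this rephrases as $\int H_n \,d_{\mathrm{M}} \to \int f\,d_{\mathrm{M}}$. Since $f_n \le H_n$ a.e.,
\[
\limsup_{n\to\infty}\int_{\mathbb{R}^d} f_n \,d_{\mathrm{M}} \le \lim_{n\to\infty}\int_{\mathbb{R}^d} H_n\,d_{\mathrm{M}} = \int_{\mathbb{R}^d} f\,d_{\mathrm{M}},
\]
and combining this with the Fatou bound yields the claimed convergence.

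The main obstacle is the downward-continuity step: continuity from above is not recorded as a lemma in the excerpt, so one must derive it by the complementation trick above. This requires that $G(g) \setminus G(H_n)$ be measurable, which is fine because $\mathcal{M}^*(\mathbb{R}^{d+1})$ is a $\sigma$-algebra by Proposition \ref{sigma-algebra}, and also that the subtraction $\mu(G(g))-\mu(G(H_n))$ make sense, for which the hypothesis $g \in \mathcal{M}^1(\mathbb{R}^d)$ is essential. An alternative route would apply Fatou to $\{g-f_n\}$, but that demands first verifying that the pointwise difference of two elements of $\mathcal{M}^0$ with the correct sign lies in $\mathcal{M}^0$---a closure property not among those listed in the preceding limit-closure proposition---so the envelope approach via $H_n$ is cleaner within the tools already available.
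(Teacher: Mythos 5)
Your argument is correct in substance, but it takes a genuinely different route from the paper. The paper's proof is the classical two-line argument: by Fatou's lemma it suffices to bound the $\limsup$, and this is obtained by applying Theorem \ref{Fatou} to the nonnegative functions $g-f_n$. You explicitly rejected that route on the grounds that closure of $\mathcal{M}^0$ under differences (and the linearity needed to split $\int(g-f_n)$) is not among the recorded tools --- a fair observation, and in fact a gap the paper's own proof glosses over. Your alternative builds the decreasing envelopes $H_n=\sup_{k\ge n}f_k$, derives continuity from above for $\mu$ by complementation inside the finite-measure set $G(g)$ via Proposition \ref{continuity} and Proposition \ref{sigma-algebra}, and sandwiches $\int f_n\,d_{\mathrm{M}}$ between the Fatou lower bound and $\int H_n\,d_{\mathrm{M}}$. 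This stays entirely within the machinery already established (closure of $\mathcal{M}^0$ under countable suprema, Proposition \ref{measure of subgraph}), and it also makes the integrability assertions $f_n,f\in\mathcal{M}^1(\mathbb{R}^d)$ explicit, which the paper leaves implicit; the cost is a longer argument than the paper's.

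One step needs patching: ``$G(H_n)\downarrow G(f)$ a.e.'' is not automatic, because the subgraph uses a strict inequality. For a decreasing sequence one only has
\[
G(f)\ \subset\ \bigcap_{n=1}^{\infty}G(H_n)\ \subset\ \{(x,t):0<t\le f(x)\}\quad\text{a.e.},
\]
so the intersection may exceed $G(f)$ by part of the graph $\Gamma:=\{(x,f(x)):f(x)>0\}$ (no such issue occurs for increasing limits, which is why the paper never confronts it). The repair is short and uses only the paper's tools: for every $\varepsilon>0$ one has $\Gamma\subset G\bigl((1+\varepsilon)f\bigr)\setminus G(f)$, and since $f\in\mathcal{M}^1(\mathbb{R}^d)$, Proposition \ref{measure of subgraph} together with homogeneity gives
\[
\overline{\mu}(\Gamma)\ \le\ \mu\bigl(G((1+\varepsilon)f)\bigr)-\mu\bigl(G(f)\bigr)\ =\ \varepsilon\int_{\mathbb{R}^d}f\,d_{\mathrm{M}},
\]
so letting $\varepsilon\downarrow 0$ shows $\overline{\mu}(\Gamma)=0$. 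Hence $\mu\bigl(\bigcap_n G(H_n)\bigr)=\mu(G(f))$, your limit $\int H_n\,d_{\mathrm{M}}\to\int f\,d_{\mathrm{M}}$ stands, and the rest of the proof goes through as written.
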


\begin{proof}
Taking Fatou's lemma into account, it suffices to show 
\[
\limsup_{n \to \infty}\int_{\mathbb{R}^d} f_n(x)\,d_{\mathrm{M}}x
\leq \int_{\mathbb{R}^d}f(x)\,d_{\mathrm{M}}x. 
\]
This follows by applying Fatou's lemma to the nonnegative functions 
$g(x)-f_n(x) \geq 0$.  
\end{proof}

\begin{prop}[Layer-cake formula]
If $f \in \mathcal{M}^1(\mathbb{R}^d)$ with 
$f \geq 0$, then  
\[
\int_{\mathbb{R}^d}f(x)\,d_{\mathrm{M}}x
=\int_0^{\infty}
\mu(\{x: f(x)>t\})\,d_{\mathrm{M}}t. 
\]
\end{prop}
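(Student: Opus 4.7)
The strategy is to establish the formula for nonnegative step functions by a direct computation, then pass to the limit using the monotone approximation of Proposition \ref{approximation by step functions} together with the monotone convergence theorem applied on $(0,\infty)$.

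First, let $\varphi = \sum_{m=1}^N \alpha_m \chi_{I_m}$ be a nonnegative step function with pairwise disjoint open intervals $I_m \in \mathscr{I}(\mathbb{R}^d)$. For each $t>0$ the super-level set decomposes as $\{\varphi > t\} = \bigsqcup_{m:\,\alpha_m > t} I_m$, so $\sigma$-additivity (Proposition \ref{sigma}) yields $\mu(\{\varphi > t\}) = \sum_{m:\alpha_m > t}|I_m|$. The resulting function of $t$ is a non-increasing step function supported on $(0, \max_m \alpha_m)$, and distributing the finite sum under the integral gives
\[
\int_0^\infty \mu(\{\varphi > t\})\, d_{\mathrm{M}}t = \sum_{m=1}^N |I_m|\int_0^{\alpha_m}d_{\mathrm{M}}t = \sum_{m=1}^N \alpha_m |I_m| = \int_{\mathbb{R}^d}\varphi\, d_{\mathrm{M}}x.
\]

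Next, by Proposition \ref{approximation by step functions} choose a monotone sequence $\varphi_n \uparrow f$ of nonnegative step functions with $\varphi_n \to f$ in $\mathcal{M}^1(\mathbb{R}^d)$, and set $h_n(t):=\mu(\{\varphi_n > t\})$ and $h(t):=\mu(\{f > t\})$. For each fixed $t>0$, the monotonicity $\varphi_n \uparrow f$ a.e.\ gives $\{\varphi_n > t\} \uparrow \{f > t\}$ up to a null set; hence $\{f > t\} \in \mathcal{M}^*(\mathbb{R}^d)$ (as a countable union of measurable sets, modulo a null set) and continuity of measure (Proposition \ref{continuity}) yields $h_n(t) \uparrow h(t)$. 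Each $h_n$ is a non-increasing step function on $(0,\infty)$, hence lies in $\mathcal{M}^0((0,\infty))$; the limit $h = \sup_n h_n$ therefore belongs to $\mathcal{M}^0((0,\infty))$ as well. Applying the monotone convergence theorem (Theorem \ref{monotone}) on $(0,\infty)$ to $h_n \uparrow h$ and combining with the $\mathcal{M}^1$-convergence $\int_{\mathbb{R}^d}\varphi_n\, d_{\mathrm{M}}x \to \int_{\mathbb{R}^d} f\, d_{\mathrm{M}}x$ and the step-function identity above, one obtains the layer-cake formula in the limit.

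The main technical point is the pointwise monotone convergence $h_n(t) \uparrow h(t)$ of the level-set measures; this rests on continuity of measure, together with the observation that the a.e.\ convergence $\varphi_n \uparrow f$ produces $\bigcup_n\{\varphi_n > t\} = \{f > t\}$ only up to a null set, which is harmless because null sets lie in $\mathcal{M}^*(\mathbb{R}^d)$ and do not affect outer measure.
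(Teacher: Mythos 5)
Your proposal is correct and follows essentially the same route as the paper: verify the identity for nonnegative step functions, approximate $f$ monotonically by step functions via Proposition \ref{approximation by step functions}, use continuity of measure (Proposition \ref{continuity}) to get $\mu(\{\varphi_n>t\})\uparrow\mu(\{f>t\})$ pointwise in $t$, and conclude with the monotone convergence theorem. Your additional care about the ``up to a null set'' issue and the measurability of $h_n$ and $h$ on $(0,\infty)$ is a harmless (and welcome) elaboration of what the paper leaves implicit.
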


\begin{proof}
Choose step functions 
$\varphi_n \uparrow f$ a.e. and 
$\varphi_n \to f$ in $\mathcal{M}^1(\mathbb{R}^d)$ 
(Proposition \ref{approximation by step functions}). 
Write
\[
\varphi_n(x) = \sum_{m=1}^{N(n)}\alpha_{m,n}\chi_{I_m^n}, 
\quad 
\{\varphi_n > t\}= \bigsqcup_{\{m: \alpha_{m,n}>t\}}I_m^n
\]
so 
\[
\mu(\{\varphi_n >t \})=\sum_{m=1}^{N(n)}\chi_{[0,\alpha_{m,n})}(t)|I_m^n|. 
\]
Thus 
\[
\int_0^{\infty}\mu(\{\varphi_n >t\})\,d_{\mathrm{M}}t
= \sum_{m=1}^{N(n)}|I_m^n|
\int_0^{\infty}\chi_{[0,\alpha_{m,n})}(t)\,d_{\mathrm{M}}t
= 
\sum_{m=1}^{N(n)}\alpha_{m,n}|I_m^n|=\int_{\mathbb{R}^d}\varphi_n\,d_{\mathrm{M}}. 
\]
$\{\varphi_n >t\} \uparrow \{f>t\}=\bigcup\limits_{n=1}^{\infty}\{\varphi_n>t\}$ 
for each $t \geq 0$, 
Proposition \ref{continuity} implies 
$\mu(\{\varphi_n >t\}) \uparrow \mu(\{f>t\})$. 
Theorem \ref{monotone} gives the result. 
\end{proof}

\begin{thm}[Tonelli's theorem]
Let 
$f \in \mathcal{M}^0(\mathbb{R}^{d_1} \times \mathbb{R}^{d_2})$. 
Then 
\begin{equation*} 
\begin{split} 
& \int_{\mathbb{R}^{d_1}\times \mathbb{R}^{d_2}}f(x,y)\,d_{\mathrm{M}}(x,y) \\
&=\int_{\mathbb{R}^{d_1}}
\left ( 
\int_{\mathbb{R}^{d_2}}f(x,y)\,d_{\mathrm{M}}y
\right )
\,d_{\mathrm{M}}x
=\int_{\mathbb{R}^{d_2}}
\left ( 
\int_{\mathbb{R}^{d_1}}f(x,y)\,d_{\mathrm{M}}x
\right )
\,d_{\mathrm{M}}y. 
\end{split}
\end{equation*}
\end{thm}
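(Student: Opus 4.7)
The plan is to reduce Tonelli's theorem to a Cavalieri-type slicing identity
\[
\mu(E)=\int_{\mathbb{R}^{d_1}}\mu(E_x)\,d_{\mathrm{M}}x, \qquad E_x:=\{z\in\mathbb{R}^{d_2+1}:(x,z)\in E\},
\]
valid for every $E\in\mathcal{M}^*(\mathbb{R}^{d_1}\times\mathbb{R}^{d_2+1})$, and then to apply it to the subgraph $E=G(f)\subset\mathbb{R}^{d_1+d_2+1}$. The reduction is immediate once one observes that $G(f)_x=G(f(x,\cdot))$: granted that this slice is $\mathcal{M}^*(\mathbb{R}^{d_2+1})$-measurable for a.e.\ $x$, Proposition \ref{measure of subgraph} applied in dimension $d_2$ gives $\mu(G(f)_x)=\int_{\mathbb{R}^{d_2}}f(x,y)\,d_{\mathrm{M}}y$, and substitution yields the first equality of the theorem; the second follows by swapping the roles of $x$ and $y$.

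For the slicing identity I would carry out a $\sigma$-algebra induction in three layers. Layer one is the product-interval case: if $I=I_1\times I_2$ with $I_1\in\mathscr{I}(\mathbb{R}^{d_1})$ and $I_2\in\mathscr{I}(\mathbb{R}^{d_2+1})$, then both sides equal $|I_1||I_2|$ by direct computation. Layer two is the open-set case: by the open-set decomposition of \S 1, every open $\mathcal{O}\subset\mathbb{R}^{d_1+d_2+1}$ is an a.e.\ disjoint union of open intervals, each of which is automatically a product of the required form, so $\sigma$-additivity (Proposition \ref{sigma}) combined with the monotone convergence theorem (Theorem \ref{monotone}) propagates the identity from intervals to open sets. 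Layer three, and the most delicate step, is the \emph{null-slice lemma}: if $\mathcal{N}\in\mathcal{N}(\mathbb{R}^{d_1+d_2+1})$, then $\mathcal{N}_x\in\mathcal{N}(\mathbb{R}^{d_2+1})$ for a.e.\ $x$. I would prove it by covering $\mathcal{N}$ with open sets $\mathcal{O}_n$ of measure less than $2^{-n}$, applying the open-set case to conclude $\int_{\mathbb{R}^{d_1}}\mu((\mathcal{O}_n)_x)\,d_{\mathrm{M}}x<2^{-n}$, and running a Borel--Cantelli argument to show that $\mathcal{N}_x\subset\bigcap_n(\mathcal{O}_n)_x$ has outer measure zero outside a null set of $x$'s.

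With open sets and null sets in hand, an arbitrary $E\in\mathcal{M}^*(\mathbb{R}^{d_1+d_2+1})$ is handled by truncating to bounded cubes $E_R:=E\cap(-R,R)^{d_1+d_2+1}\in\mathcal{M}(\mathbb{R}^{d_1+d_2+1})$ and invoking the structure theorem (Lemma \ref{regu}) to write $E_R=G_R\cap\mathcal{N}_R^c$ with $G_R$ a $G_\delta$ and $\mathcal{N}_R$ null. Downward continuity of $\mu$ (from Proposition \ref{continuity}, applied inside a finite-measure envelope) propagates the slicing identity from open sets to $G_R$, the null-slice lemma absorbs $\mathcal{N}_R$, and the monotone convergence theorem finally lifts the result from $E_R$ to $E$ as $R\to\infty$. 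Measurability of $x\mapsto\mu(E_x)$ as an element of $\mathcal{M}^0(\mathbb{R}^{d_1})$ is preserved at every step because $\mathcal{M}^0$ is closed under countable suprema, infima, and a.e.\ pointwise limits.

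The main obstacle I expect is the null-slice lemma together with the measurability bookkeeping it entails, since the outer measure $\overline{\mu}$ must be controlled before the slices are known to be measurable, and both the downward-continuity and truncation steps demand careful finiteness checks. Once that lemma is secured, Proposition \ref{measure of subgraph}, $\sigma$-additivity, and monotone convergence make the remaining steps essentially automatic, and the slice identification $G(f)_x=G(f(x,\cdot))$ closes the proof.
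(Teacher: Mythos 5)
Your plan is correct in substance but follows a genuinely different route from the paper. The paper's proof is much shorter: by Propositions \ref{approximation by step functions} and \ref{approximation by integrable functions} it chooses step functions $\varphi_n$ with $G(\varphi_n)\uparrow G(f)$, checks the iterated-integral identity for a finite sum $\sum_m\alpha_m\chi_{I_m}(x)\chi_{J_m}(y)$ by direct computation (every open interval of $\mathbb{R}^{d_1+d_2}$ is automatically such a product), and lets the monotone convergence theorem (Theorem \ref{monotone}) pass the identity to $f$ on both sides. You instead establish a standalone Cavalieri slicing identity $\mu(E)=\int\mu(E_x)\,d_{\mathrm M}x$ for all $E\in\mathcal{M}^*(\mathbb{R}^{d_1}\times\mathbb{R}^{d_2+1})$ by a layered induction (intervals, open sets, null sets, then $G_\delta$ minus null via Lemma \ref{regu} plus truncation) and specialize to $E=G(f)$ via Proposition \ref{measure of subgraph} in dimension $d_2$. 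This costs extra machinery not stated in the paper but easily derived from it (downward continuity of $\mu$ inside a finite-measure envelope, and a Chebyshev-type lemma that $g\ge 0$ with $\int g\,d_{\mathrm M}=0$ forces $g=0$ a.e.), and it buys a stronger product-measure statement while making explicit exactly the points the paper's proof passes over silently: that null sets slice to null sets for a.e.\ $x$ and that $x\mapsto\int f(x,y)\,d_{\mathrm M}y$ is measurable. One ordering caveat: in your layer two the decomposition $\mathcal{O}=\bigsqcup I_m$ holds only a.e., and you cannot yet invoke the null-slice lemma of layer three to discard the discrepancy; repair this either by using the exact half-open dyadic decomposition from the proof of the open-set decomposition lemma, or by noting that the discrepancy lies in countably many hyperplane pieces whose slices are null for a.e.\ $x$.
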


\begin{proof}
By Propositions \ref{approximation by step functions} and \ref{approximation by integrable functions}, there exist step functions $\varphi_n$ with 
$G(\varphi_n) \uparrow G(f)$. 
Write 
\[
\varphi_n(x) = \sum_{m=1}^{N(n)}\alpha_{m,n}\chi_{I_m^n \times J_m^n}(x,y)
= \sum_{m=1}^{N(n)}\alpha_{m,n}
\chi_{I_m^n}(x)
\chi_{J_m^n}(y)
\]
For such $\varphi_n$, 
\begin{equation*} 
\begin{split} 
\int_{\mathbb{R}^{d_1}}
\left (
\int_{\mathbb{R}^{d_2}} \varphi_n(x,y) \,d_{\mathrm{M}}y
\right )\,d_{\mathrm{M}}x
& = 
\int_{\mathbb{R}^{d_1}}
\left ( 
\sum_{m=1}^{N(n)}\alpha_{m,n}
\chi_{I_m^n}(x)
|J_m^n| \right )
\,d_{\mathrm{M}}x \\
& =
\sum_{m=1}^{N(n)}\alpha_{m,n}
|J_m^n|
\int_{\mathbb{R}^{d_1}}
\chi_{I_m^n}(x)
\,d_{\mathrm{M}}x \\
& = 
\sum_{m=1}^{N(n)}\alpha_{m,n}
|I_m^n|
|J_m^n|
= 
\int_{\mathbb{R}^{d_1}\times \mathbb{R}^{d_2}}\varphi_n(x,y)\,d_{\mathrm{M}}(x,y). 
\end{split}
\end{equation*}
Since $\varphi_n$ increases pointwise to $f$,  
monotone convergence applies to both iterated integrals and to the integral over the product space, yielding the stated identities for $f$. 
\end{proof}

\begin{rem}[Streamlined measurability]
In the Mimura integral framework, measurability is defined via the subgraph $G(f)$. 
Hence for nonnegative functions the proof of Tonelli's theorem requires no separate measurability lemmas:
a step function approximation $\varphi_n\uparrow f$ 
and $\sigma$-additivity {\rm(}continuity from below{\rm)} already yield 
$\mu(G(f))=\lim_n\mu(G(\varphi_n))$, from which Tonelli's Theorem follows directly.
\end{rem}

\section{Integrals of Real and Complex-Valued functions}

Using linearity and reduction to nonnegative functions, 
we extend the integral to real and complex valued functions.

\begin{defn}[Integral of a real valued function]
Let $f:\mathbb{R}^d \to \mathbb{R}$. 
We declare $f \in \mathcal{M}^1(\mathbb{R}^d)$ iff 
\[
f^+ :=\frac{|f|+f}{2} \in \mathcal{M}^1(\mathbb{R}^d) 
\quad 
\text{ and }
\quad 
f^{-}:=\frac{|f|-f}{2} \in \mathcal{M}^1(\mathbb{R}^d)
\qquad (f=f^+-f^-). 
\]
For such $f$, define
\[
\int_{\mathbb{R}^d}f(x)\,d_{\mathrm{M}}x
:=\int_{\mathbb{R}^d}f^+(x)\,d_{\mathrm{M}}x-\int_{\mathbb{R}^d}f^-(x)\,d_{\mathrm{M}}x. 
\] 
\end{defn}

\begin{rem}
$
f^+(x)=\max{\{0,f(x)\}}$, $f^-(x)=\max{\{0,-f(x)\}}$. 
\end{rem}

\begin{defn}[Integral of a complex-valued function]
Let $f:\mathbb{R}^d \to \mathbb{C}$, write 
\[
f(x)=u(x) + iv(x), \quad 
u, v:\mathbb{R}^d \to \mathbb{R}. 
\]
We set 
$f \in \mathcal{M}^1(\mathbb{R}^d)$ iff 
$u, v \in \mathcal{M}^1(\mathbb{R}^d)$, and define
\[
\int_{\mathbb{R}^d}f(x)\,d_{\mathrm{M}}x:=\int_{\mathbb{R}^d}u(x)\,d_{\mathrm{M}}x 
+i\int_{\mathbb{R}^d}v(x)\,d_{\mathrm{M}}x. 
\]
\end{defn}

\begin{prop}[Basic properties of the integral]
Let $E \in \mathcal{M}^*(\mathbb{R}^d)$, 
$f, g \in \mathcal{M}^1(E)$, $\alpha, \beta \in \mathbb{C}$ and 
$\{E_n\}\subset \mathcal{M}^*(\mathbb{R}^d)$ 
with $E_m\cap E_n=\varnothing$ for $m \neq n$. 
Then:  

\begin{itemize}

\item[\rm{(i)}] $\displaystyle \int_E \alpha f(x) + \beta g(x)\,d_{\mathrm{M}}x 
= \alpha \int_E f(x)\,d_{\mathrm{M}}x + \beta \int_E g(x)\,d_{\mathrm{M}}x$.

\item[\rm{(ii)}] 
$\displaystyle \left | \int_{E}f(x)\,d_{\mathrm{M}}x\right |
\leq 
\int_{E}|f(x)|\,d_{\mathrm{M}}x
$.

\item[\rm{(iii)}] 
$\displaystyle \int_{\bigsqcup\limits_{n=1}^{\infty}E_n}f(x)\,d_{\mathrm{M}}x
= \sum_{n=1}^{\infty}\int_{E_n} f(x)\,d_{\mathrm{M}}x, 
\quad \forall\,f \in \mathcal{M}^1(\mathbb{R}^d)
$.

\item[\rm{(iv)}] If $f$ and $g$ are real-valued on $E$ and 
$f\leq g$ a.e. on $E$, then 
$\displaystyle \int_E f(x)\,d_{\mathrm{M}}x \leq \int_E g(x)\,d_{\mathrm{M}}x$. 

\end{itemize}

\end{prop}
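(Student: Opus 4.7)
\medskip
\noindent\textbf{Proof proposal.} The plan is to reduce each item to results already established for nonnegative functions (linearity, monotonicity, and the Monotone Convergence Theorem \ref{monotone}), handle the signed case via the decomposition $f = f^{+}-f^{-}$, and finally pass from real-valued to complex-valued functions by splitting into real and imaginary parts.

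For (i), first I would treat the case of nonnegative scalars acting on real-valued integrable functions. Using the lemmas on additivity and homogeneity for nonnegative functions, together with the identity $(f+g)^{+}-(f+g)^{-}=f^{+}-f^{-}+g^{+}-g^{-}$, I rearrange to obtain $(f+g)^{+}+f^{-}+g^{-}=(f+g)^{-}+f^{+}+g^{+}$; applying the already-proven nonnegative additivity to both sides and subtracting yields $\int(f+g)=\int f+\int g$. For scaling by a negative real, I use $(-f)^{+}=f^{-}$ and $(-f)^{-}=f^{+}$ to conclude $\int(-f)=-\int f$, and combine with positive homogeneity. Finally, for complex $\alpha=a+ib$ and complex $f=u+iv$, I expand $\alpha f$ into its real and imaginary parts and apply the real case componentwise; this is routine once the real case is in hand.

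For (ii), in the real case write $|f|=f^{+}+f^{-}$, so $\left|\int f\right|=|\int f^{+}-\int f^{-}|\le \int f^{+}+\int f^{-}=\int|f|$. The complex case needs the standard rotation trick: set $\int_{E}f = r e^{i\theta}$ with $r\ge 0$, and consider $g(x):=e^{-i\theta}f(x)$, which also lies in $\mathcal{M}^1(E)$ by (i). Then
\[
\left|\int_{E} f\right| = r = \int_{E} g \,d_{\mathrm{M}}x = \int_{E} \mathrm{Re}(g)\,d_{\mathrm{M}}x \le \int_{E} |g|\,d_{\mathrm{M}}x = \int_{E} |f|\,d_{\mathrm{M}}x,
\]
where the third equality uses that $\int g$ is real (equal to $r$), and the inequality uses the monotonicity of the integral applied to $\mathrm{Re}(g)\le |g|$, i.e., the real-valued monotonicity from (iv), which I establish below directly from nonnegativity.

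For (iv), writing $h:=g-f\ge 0$ a.e.\ and using (i) gives $\int_{E}g - \int_{E}f = \int_{E}h \ge 0$ by the nonnegativity lemma.

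The main obstacle will be (iii). I first reduce to nonnegative $f$ by the decomposition $f=f^{+}-f^{-}$ and linearity; it then suffices to prove the identity for nonnegative $f\in\mathcal{M}^1(\mathbb{R}^d)$ over a disjoint countable union. Setting $F_N:=\bigsqcup_{n=1}^{N}E_n$ and $g_N:=f\cdot\chi_{F_N}$, one has $g_N\uparrow f\cdot\chi_{\bigsqcup_n E_n}$ a.e.\ (since $\chi_{F_N}\uparrow \chi_{\bigsqcup_n E_n}$ pointwise), and each $g_N$ is measurable because the class $\mathcal{M}^*(\mathbb{R}^d)$ is a $\sigma$-algebra and finite sums of measurable characteristic multiples remain in $\mathcal{M}^0$. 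By the Monotone Convergence Theorem \ref{monotone},
\[
\int_{\bigsqcup_{n=1}^{\infty}E_n} f\,d_{\mathrm{M}} = \lim_{N\to\infty}\int_{F_N} f\,d_{\mathrm{M}} = \lim_{N\to\infty}\sum_{n=1}^{N}\int_{E_n} f\,d_{\mathrm{M}} = \sum_{n=1}^{\infty}\int_{E_n} f\,d_{\mathrm{M}},
\]
where the middle equality uses finite additivity, which itself follows by induction from the two-set case $\int_{E_1\sqcup E_2}f = \int f\chi_{E_1} + \int f\chi_{E_2}$, a consequence of (i) applied to $\chi_{E_1\sqcup E_2}=\chi_{E_1}+\chi_{E_2}$. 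The point requiring care is confirming that $f\chi_E\in\mathcal{M}^1$ whenever $E\in\mathcal{M}^*(\mathbb{R}^d)$ and $f\in\mathcal{M}^1(\mathbb{R}^d)$, which follows from the subgraph characterization in Proposition \ref{measure of subgraph} together with the fact that $G(f\chi_E) = G(f)\cap(E\times\mathbb{R})$ lies in $\mathcal{M}^*(\mathbb{R}^{d+1})$ and has finite measure. Once this is in place, (iii) extends to signed and complex $f$ by applying it separately to $f^{\pm}$ and to $\mathrm{Re}(f)^{\pm},\mathrm{Im}(f)^{\pm}$ and reassembling by linearity.
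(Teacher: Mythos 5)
The paper states this proposition without any proof (it is left as a routine consequence of the earlier machinery), so your argument is filling a genuine gap rather than paralleling an existing one; what you propose is correct and is exactly the standard route one would expect the author to have in mind: reduce to nonnegative functions via $f=f^{+}-f^{-}$ and the rearrangement $(f+g)^{+}+f^{-}+g^{-}=(f+g)^{-}+f^{+}+g^{+}$ for (i), the rotation trick $e^{-i\theta}\int f$ for (ii), monotone convergence plus finite additivity for (iii), and $g-f\ge 0$ for (iv), with no circularity since (iv) only feeds into (ii). Two places deserve an explicit word that your sketch glosses over. First, in (i) you must check that $(f+g)^{\pm}$ (and more generally $\alpha f+\beta g$) actually lie in $\mathcal{M}^1$ before invoking the nonnegative additivity lemma; the natural route in this paper is domination, $(f+g)^{\pm}\le |f|+|g|\in\mathcal{M}^1$, combined with measurability of $(f+g)^{\pm}$, and measurability of sums is itself not among the closure properties the paper proves for $\mathcal{M}^0$ (which covers only sup, inf, limsup, liminf), so a short argument via step-function approximation of $f$ and $g$ is needed. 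Second, in (iii) your identity $G(f\chi_E)=G(f)\cap(E\times\mathbb{R})$ is right and is the key point, but you should also justify that $E\times\mathbb{R}\in\mathcal{M}^*(\mathbb{R}^{d+1})$ for $E\in\mathcal{M}^*(\mathbb{R}^d)$; this is true (e.g.\ via the structure theorem of \S 7, writing $E$ as a Borel set modulo a null set and noting that products of open sets are open and products of null sets with $\mathbb{R}$ are null), but it is not stated anywhere in the paper and is the only genuinely nontrivial measurability fact your proof consumes. With those two points supplied, the proof is complete.
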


\begin{thm}[Dominated convergence theorem]
\label{DCT} 
Assume: 

\noindent
\begin{itemize}

\item[\rm{(i)}] $\{f_n\}\subset \mathcal{M}^0(\mathbb{R}^d)$ and $f_n(x)\to f(x)$ a.e.

\item[\rm{(ii)}] $|f_n(x)|\leq g(x)$ a.e.\ for all $n \in \mathbb{N}$. 

\item[\rm{(iii)}] $g \in \mathcal{M}^1(\mathbb{R}^d)$. 

\end{itemize}
Then, $\{f_n\}\subset \mathcal{M}^1(\mathbb{R}^d)$, 
$f \in \mathcal{M}^1(\mathbb{R}^d)$ and 
\[
\lim_{n \to \infty}\int_{\mathbb{R}^d} f_n(x)\,d_{\mathrm{M}}x
=\int_{\mathbb{R}^d}f(x)\,d_{\mathrm{M}}x. 
\]

\end{thm}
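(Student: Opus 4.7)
The plan is to reduce the theorem to the nonnegative dominated convergence theorem (Theorem \ref{DCT4NN}) via the decomposition into positive, negative, real, and imaginary parts that was already used to define the integral for signed and complex-valued functions in this section.

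First I would handle the real-valued case. Decomposing $f_n = f_n^+ - f_n^-$ and $f = f^+ - f^-$, continuity of $x \mapsto x^\pm$ gives $f_n^\pm \to f^\pm$ a.e., and the domination $|f_n| \le g$ a.e.\ yields $0 \le f_n^\pm \le g$ a.e. Interpreting the hypothesis $f_n \in \mathcal{M}^0(\mathbb{R}^d)$ in the signed setting as meaning $f_n^+, f_n^- \in \mathcal{M}^0(\mathbb{R}^d)$ (the analogue of the convention already used for $\mathcal{M}^1$), Theorem \ref{DCT4NN} applied to each of the two nonnegative sequences $\{f_n^+\}$ and $\{f_n^-\}$ gives $f_n^\pm, f^\pm \in \mathcal{M}^1(\mathbb{R}^d)$ together with
$$\lim_{n\to\infty}\int_{\mathbb{R}^d}f_n^\pm(x)\,d_{\mathrm{M}}x = \int_{\mathbb{R}^d}f^\pm(x)\,d_{\mathrm{M}}x.$$
The definition of the Mimura integral for real-valued functions then gives $f_n, f \in \mathcal{M}^1(\mathbb{R}^d)$, and subtracting the two limits yields $\int f_n\,d_{\mathrm{M}} \to \int f\,d_{\mathrm{M}}$.

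Second, for the complex-valued case I would write $f_n = u_n + i v_n$ and $f = u + i v$. Then $u_n \to u$ and $v_n \to v$ a.e., and $|u_n|, |v_n| \le |f_n| \le g$ a.e. Applying the real-valued case separately to $\{u_n\}$ and $\{v_n\}$ yields their integrability and the convergence of their integrals; combining these through the definition of the complex integral gives $f_n, f \in \mathcal{M}^1(\mathbb{R}^d)$ and $\int f_n\,d_{\mathrm{M}} \to \int f\,d_{\mathrm{M}}$.

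The main obstacle is conceptual rather than computational: one must verify that the decomposition preserves membership in $\mathcal{M}^0(\mathbb{R}^d)$, so that Theorem \ref{DCT4NN} is legitimately applicable to $\{f_n^\pm\}$ (and subsequently to $\{u_n\}, \{v_n\}$). Since the excerpt defines $\mathcal{M}^0$ via measurability of the subgraph only for nonnegative functions, the signed and complex cases must proceed under the analogous convention adopted for $\mathcal{M}^1$ in this section, namely that $f \in \mathcal{M}^0$ means $f^+, f^- \in \mathcal{M}^0$ (and in the complex case that the real and imaginary parts lie in $\mathcal{M}^0$). Once this interpretation is made explicit, the remainder is a routine two-step reduction.
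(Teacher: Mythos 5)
Your proposal is correct and follows essentially the same route as the paper: decompose $f_n=f_n^+-f_n^-$, observe $0\le f_n^\pm\le g$ a.e., apply Theorem \ref{DCT4NN} to each nonnegative sequence, and recombine via the definition of the integral for signed (and complex) functions. Your explicit remarks on the complex case and on reading the hypothesis $\{f_n\}\subset\mathcal{M}^0(\mathbb{R}^d)$ as $f_n^\pm\in\mathcal{M}^0(\mathbb{R}^d)$ are sensible clarifications of conventions the paper's proof uses implicitly, not a departure from its argument.
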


\begin{proof}
Since $|f_n|= f_n^++f_n^- \leq g$, we have $0 \leq f_n^+, f_n^- \leq g$. 
Apply Theorem \ref{DCT4NN} to the sequences $\{f_n^+\}$ and $\{f_n^-\}$: 
\begin{equation*} 
\begin{split} 
\lim_{n \to \infty}\int_{\mathbb{R}^d} f_n\,d_{\mathrm{M}}
& = \lim_{n \to \infty}\int_{\mathbb{R}^d} f_n^{+}-f_n^-\,d_{\mathrm{M}}
= \lim_{n \to \infty}\int_{\mathbb{R}^d} f_n^{+}\,d_{\mathrm{M}}- 
\lim_{n \to \infty}\int_{\mathbb{R}^d} f_n^-\,d_{\mathrm{M}} \\
&= \int_{\mathbb{R}^d} f^+\,d_{\mathrm{M}} 
- \int_{\mathbb{R}^d} f^-\,d_{\mathrm{M}}
= \int_{\mathbb{R}^d} f\,d_{\mathrm{M}}. 
\end{split}
\end{equation*}
\end{proof}

\begin{thm}[Fubini's theorem]
If $f \in \mathcal{M}^1(\mathbb{R}^{d_1} \times \mathbb{R}^{d_2})$, then 
\begin{equation*} 
\begin{split} 
& \int_{\mathbb{R}^{d_1}\times \mathbb{R}^{d_2}}f(x,y)\,d_{\mathrm{M}}(x,y) \\
&=\int_{\mathbb{R}^{d_1}}
\left ( 
\int_{\mathbb{R}^{d_2}}f(x,y)\,d_{\mathrm{M}}y
\right )
\,d_{\mathrm{M}}x
=\int_{\mathbb{R}^{d_2}}
\left ( 
\int_{\mathbb{R}^{d_1}}f(x,y)\,d_{\mathrm{M}}x
\right )
\,d_{\mathrm{M}}y
\end{split}
\end{equation*}
\end{thm}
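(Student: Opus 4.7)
The plan is to reduce Fubini's theorem to Tonelli's theorem (already established for nonnegative measurable functions) by decomposing $f$ into nonnegative pieces. Writing $f = u + iv$ with $u, v: \mathbb{R}^{d_1}\times\mathbb{R}^{d_2}\to\mathbb{R}$, the definition of $\mathcal{M}^1$ for complex-valued functions gives $u, v \in \mathcal{M}^1$, and the definition for real-valued functions gives $u^\pm, v^\pm \in \mathcal{M}^1$. Each of these four pieces is nonnegative and measurable (in $\mathcal{M}^0$), so Tonelli applies to each one individually.

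For each $\varphi \in \{u^+, u^-, v^+, v^-\}$, Tonelli yields
\[
\int_{\mathbb{R}^{d_1}\times\mathbb{R}^{d_2}} \varphi(x,y)\,d_{\mathrm{M}}(x,y)
= \int_{\mathbb{R}^{d_1}}\Bigl(\int_{\mathbb{R}^{d_2}}\varphi(x,y)\,d_{\mathrm{M}} y\Bigr)\,d_{\mathrm{M}} x.
\]
Since the left side is finite, the inner integral $I_\varphi(x) := \int_{\mathbb{R}^{d_2}}\varphi(x,y)\,d_{\mathrm{M}}y$ is finite for a.e.\ $x \in \mathbb{R}^{d_1}$ and defines an element of $\mathcal{M}^1(\mathbb{R}^{d_1})$. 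On the common a.e.\ set where all four of $I_{u^\pm}, I_{v^\pm}$ are finite, the slice $f(x,\cdot)$ has all four nonnegative parts in $\mathcal{M}^1(\mathbb{R}^{d_2})$, so $f(x,\cdot) \in \mathcal{M}^1(\mathbb{R}^{d_2})$ with
\[
\int_{\mathbb{R}^{d_2}}f(x,y)\,d_{\mathrm{M}}y = \bigl(I_{u^+}(x) - I_{u^-}(x)\bigr) + i\bigl(I_{v^+}(x) - I_{v^-}(x)\bigr).
\]
Linearity of the integral for signed and complex-valued functions (established in the previous section) then combines the four Tonelli identities into the desired equality. The symmetric statement with $x$ and $y$ interchanged follows by the same argument.

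The main obstacle is the careful handling of \emph{a.e.} well-definedness of the slice-integral function $x \mapsto \int_{\mathbb{R}^{d_2}} f(x,y)\,d_{\mathrm{M}}y$ in the Mimura framework, where measurability is tracked via subgraphs rather than via measurability of slices. This is, however, absorbed into Tonelli applied to the four nonnegative pieces: finiteness of the iterated integrals and $\mathcal{M}^1$-membership of each $I_\varphi$ are delivered directly, so the only remaining step is to redefine $I_f$ arbitrarily on the Lebesgue null set where some $I_{u^\pm}(x)$ or $I_{v^\pm}(x)$ equals $+\infty$, which does not affect the integral over $\mathbb{R}^{d_1}$.
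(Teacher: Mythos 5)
Your proposal is correct and follows essentially the same route as the paper: reduce Fubini to Tonelli by splitting $f$ into its nonnegative parts ($f^{+}$, $f^{-}$, and real/imaginary components in the complex case) and recombine by linearity. You simply spell out the a.e.\ finiteness of the slice integrals and the complex decomposition, details the paper's one-line proof leaves implicit.
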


\begin{proof}
Apply Tonelli's theorem to $f^+$ and $f^-$. 
\end{proof}

\section{Relation of Riemann and Mimura integrals}

\begin{prop}
Every Riemann integrable function is Mimura integrable, and the two integrals coincide.
\end{prop}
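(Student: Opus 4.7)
The plan is to show, partition by partition, that the Mimura upper and lower sums are sandwiched between the classical Darboux upper and lower sums, so that Riemann integrability automatically satisfies the essential Riemann criterion (Theorem~\ref{Riemann conditions}) and forces the two integral values to agree. First I would reduce to $f\ge 0$: any Riemann integrable $f:[a,b]\to\mathbb{R}$ is bounded and, by Lebesgue's criterion, continuous almost everywhere, hence so are $f^{+}$ and $f^{-}$; both are therefore Riemann integrable. Under the signed-integral convention of \S10, it then suffices to handle the case $f\ge 0$, which I now assume.

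Fix $\varepsilon>0$ and, by Riemann integrability, choose a finite partition $[a,b]=\bigcup_{m=1}^{N}[a_m,b_m]$ with $(a_m,b_m)$ pairwise disjoint whose Darboux sums satisfy $U_R(f,P)-L_R(f,P)<\varepsilon$, where
\[
L_R(f,P):=\sum_{m=1}^{N}\bigl(\inf_{[a_m,b_m]}f\bigr)(b_m-a_m), \quad U_R(f,P):=\sum_{m=1}^{N}\bigl(\sup_{[a_m,b_m]}f\bigr)(b_m-a_m).
\]
The family $P:=\{(a_m,b_m)\}_{m=1}^{N}$ is a finite pairwise disjoint collection of open intervals whose union agrees with $(a,b)$ outside the finite endpoint set $\{a_m,b_m\}$, which is a Lebesgue null set; hence $P\in\Pi((a,b))$. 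From the elementary comparisons $\inf_{[a_m,b_m]}f\le\essinf_{(a_m,b_m)}f$ and $\esssup_{(a_m,b_m)}f\le\sup_{[a_m,b_m]}f$ I obtain the sandwich
\[
L_R(f,P)\ \le\ L(f,P)\ \le\ U(f,P)\ \le\ U_R(f,P),
\]
so $U(f,P)-L(f,P)<\varepsilon$. Combined with the bound $\overline{\int_{(a,b)}}f\,d_{\mathrm{M}}\le(\sup f)(b-a)<\infty$, Theorem~\ref{Riemann conditions} yields $f\in\mathcal{M}^1((a,b))$.

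For equality of the two values, let $R$ denote the Riemann integral of $f$ and $M$ its Mimura integral. For the partition $P$ above, $R\in[L_R(f,P),U_R(f,P)]$ by definition of Riemann integrability, while $M\in[L(f,P),U(f,P)]\subset[L_R(f,P),U_R(f,P)]$ by the sandwich. Hence $|R-M|<\varepsilon$, and since $\varepsilon>0$ was arbitrary, $R=M$. The only potential obstacles are the bookkeeping needed to reduce to $f\ge 0$ and the verification that $P\in\Pi((a,b))$ (the missing endpoints form a null set); both are routine. The substantive content is the trivial monotonicity $\inf\le\essinf$, $\esssup\le\sup$, which does all the real work, so no genuinely difficult step is expected.
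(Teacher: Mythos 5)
Your proposal is correct and is essentially the paper's own argument: both rest on the comparison $\inf\le\essinf\le\esssup\le\sup$ on each cell and on the fact that Riemann/Darboux partitions (padded with empty intervals and ignoring the null set of endpoints) are admissible Mimura partitions, so the Mimura upper and lower integrals are sandwiched between the Darboux ones. The only cosmetic differences are that you phrase the conclusion through the $\varepsilon$-criterion of Theorem~\ref{Riemann conditions} rather than the direct chain of four inequalities, and you write the argument for $[a,b]\subset\mathbb{R}$ while the paper works on $I\in\mathscr{I}(\mathbb{R}^d)$ --- your argument transfers verbatim to $d$-dimensional grid partitions.
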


\begin{proof}
It suffices to treat a nonnegative $f$ on 
$I \in \mathscr{I}(\mathbb{R}^d)$. 
Denote the Riemann integral by 
$\displaystyle 
\int_{I}f(x)\,d_{\mathrm{R}}x. 
$
For any $A \subset \mathbb{R}^d$, 
\[
\inf_{x \in A} f(x) \leq \essinf_{x \in A} f(x) \leq \esssup_{x \in A} f(x) \leq  \sup_{x \in A} f(x)
\]
Since Mimura's partitions are at least as flexible, we obtain 
\[
\underline{\int_I}f(x)\,d_{\mathrm{R}}x
\leq 
\underline{\int_I}f(x)\,d_{\mathrm{M}}x
\leq 
\overline{\int_I}f(x)\,d_{\mathrm{M}}x
\leq 
\overline{\int_I}f(x)\,d_{\mathrm{R}}x
\]
Thus Riemann integrability implies Mimura integrability with equal values. 
\end{proof}

\section{Equivalence with the Lebesgue integral}

In this section, we prove that the Mimura integral is equivalent to 
the Lebesgue integral. 
It suffices to treat nonnegative measurable functions. 

\subsection{Identification of the Measure Space
$(\mathbb{R}^d, \mathcal{M}^*(\mathbb{R}^d), \mu)$}
By Proposition \ref{outer measure1} and Corollary \ref{outer measure2}, 
our outer measure coincides with the Lebesgue outer measure, and 
$\mathcal{M}^*(\mathbb{R}^d)$ constructed via Carath\'eodory's criterion 
is precisely the collection of Lebesgue measurable sets. 
Hence our measure space 
$(\mathbb{R}^d, \mathcal{M}^*(\mathbb{R}^d), \mu)$ is 
the Lebesgue measure space.

\subsection{Equality of the Integrals}

Write the Lebesgue integral as 
$\displaystyle 
\int_{\mathbb{R}^d}f(x)\,d_{\mathrm{L}}x
$. 
Set
\[I_n:=(-n, n)^d, \quad 
f_n(x):=f(x) \chi_{I_n}(x) \quad 
a_k := \frac{k}{2^{n}} \quad (k = 0, 1, \ldots, n2^n), 
\]
and define $\varphi_n$ by 
\begin{equation} \label{approximation by measurable simple functions}
\varphi_n(x)
\displaystyle 
:=\sum_{k=1}^{n2^n} 
a_{k-1}\cdot  \chi_{\{ a_{k-1} \leq f_n < a_k\}}(x) + n \chi_{\{f_n \geq n\}}(x).
\end{equation}
Then $0 \leq \varphi_n(x) \uparrow f(x)$ as $n \to \infty$, 
and the Lebesgue and Mimura integrals of $\varphi_n$
coincide: 
\[
\int_{\mathbb{R}^d}\varphi_n\,d_{\mathrm{L}}
= \sum_{k=1}^{n2^n}a_{k-1}\mu(\{a_{k-1}\leq f_n <a_k\})
+ n \mu(\{f_n \geq n\})
= \int_{\mathbb{R}^d}\varphi_n\,d_{\mathrm{M}}. 
\]
By the definition of the Lebesgue integral and 
the monotone convergence theorem for Mimura integral (Theorem \ref{monotone}), 
\[
\int_{\mathbb{R}^d}f(x)\,d_{\mathrm{L}}x
:= \lim_{n \to \infty} \int_{\mathbb{R}^d}\varphi_n(x)\,d_{\mathrm{L}}x
=\lim_{n \to \infty}\int_{\mathbb{R}^d}\varphi_n(x)\,d_{\mathrm{M}}x
=\int_{\mathbb{R}^d}f(x)\,d_{\mathrm{M}}x. 
\]

\subsection{Equality of Measurable Functions: 
$\mathcal{M}^0(\mathbb{R}^d)=\mathcal{L}^0(\mathbb{R}^d)$}

\begin{defn}[Lebesgue measurable functions]
For a nonnegative $f$ on $\mathbb{R}^d$, 
we say $f$ is Lebesgue measurable if 
$\{f>a\} \in \mathcal{M}^*(\mathbb{R}^d)$ 
for every $a \geq 0$. 
Denote the class by $\mathcal{L}^0(\mathbb{R}^d)$. 
\end{defn}

\begin{prop}
$\mathcal{M}^0(\mathbb{R}^d)=\mathcal{L}^0(\mathbb{R}^d)$. 
\end{prop}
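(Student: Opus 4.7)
The plan is to prove the two inclusions separately, using the approximation machinery already in place (Propositions~\ref{approximation by step functions} and \ref{approximation by integrable functions}) together with one preparatory product-measurability lemma that has not yet been set up in the paper.

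\emph{Preparatory lemma.} First I would establish: if $E\in\mathcal{M}^*(\mathbb{R}^d)$ and $J\in\mathscr{I}(\mathbb{R})$, then $E\times J\in\mathcal{M}^*(\mathbb{R}^{d+1})$. Using the structure theorem from Section~6, write $E=B\cup N$ with $B\in\mathcal{B}(\mathbb{R}^d)$ and $N\in\mathcal{N}(\mathbb{R}^d)$. The class $\{B\subset\mathbb{R}^d : B\times J\in\mathcal{B}(\mathbb{R}^{d+1})\}$ is a $\sigma$-algebra containing the open sets (since $\mathcal{O}\times J$ is open in $\mathbb{R}^{d+1}$), hence contains all of $\mathcal{B}(\mathbb{R}^d)$. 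For null $N$, any efficient $d$-interval cover of $N$ inflates to a $(d+1)$-interval cover of $N\times J$ of arbitrarily small total measure, so $N\times J\in\mathcal{N}(\mathbb{R}^{d+1})$. Combining gives $E\times J\in\mathcal{M}^*(\mathbb{R}^{d+1})$.

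\emph{Direction $\mathcal{L}^0(\mathbb{R}^d)\subset\mathcal{M}^0(\mathbb{R}^d)$.} For nonnegative $f\in\mathcal{L}^0$, take the dyadic simple approximations $\varphi_n$ from \eqref{approximation by measurable simple functions}. Each level set $\{a_{k-1}\le f_n<a_k\}$ and $\{f_n\ge n\}$ is Lebesgue measurable by assumption, so the preparatory lemma makes each rectangle $\{a_{k-1}\le f_n<a_k\}\times(0,a_{k-1})$ measurable in $\mathbb{R}^{d+1}$. Hence $G(\varphi_n)\in\mathcal{M}^*(\mathbb{R}^{d+1})$, and since $\varphi_n\uparrow f$ yields $G(\varphi_n)\uparrow G(f)$, we conclude $G(f)=\bigcup_n G(\varphi_n)\in\mathcal{M}^*(\mathbb{R}^{d+1})$.

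\emph{Direction $\mathcal{M}^0(\mathbb{R}^d)\subset\mathcal{L}^0(\mathbb{R}^d)$.} Given nonnegative $f\in\mathcal{M}^0$, Proposition~\ref{approximation by integrable functions} produces truncations $g_n:=\min(f,n)\chi_{\{|x|<n\}}\in\mathcal{M}^1(\mathbb{R}^d)$, and Proposition~\ref{approximation by step functions} then supplies nonnegative step functions $\psi_{n,k}\uparrow g_n$ a.e.\ as $k\to\infty$. Step functions lie in $\mathcal{L}^0$ because their super-level sets are finite unions of open $d$-intervals. Moreover $\mathcal{L}^0$ is closed under a.e.\ pointwise monotone limits: $\{\lim_k\psi_{n,k}>a\}$ and $\bigcup_k\{\psi_{n,k}>a\}$ differ only on the null set where convergence fails, and that null set itself lies in $\mathcal{M}^*(\mathbb{R}^d)$. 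Hence $g_n\in\mathcal{L}^0$, and one further a.e.\ monotone passage $n\to\infty$ (using $\{f>a\}=\bigcup_n\{g_n>a\}$ for $a\ge 0$) yields $f\in\mathcal{L}^0$. The main obstacle I expect is the preparatory product lemma: no product-measurability machinery has been set up yet, and both inclusions pivot on transferring measurability between $\mathbb{R}^d$ and $\mathbb{R}^{d+1}$; once that bridge is built, the remaining steps follow directly from the approximation theorems already proven.
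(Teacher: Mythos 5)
Your proof is correct and follows the same overall strategy as the paper: both inclusions are obtained by writing $G(f)$ as an increasing (up to null sets) union of subgraphs of step/simple approximations and invoking the $\sigma$-algebra property of $\mathcal{M}^*$. The genuine difference is your preparatory product lemma, $E\times J\in\mathcal{M}^*(\mathbb{R}^{d+1})$ for $E\in\mathcal{M}^*(\mathbb{R}^d)$ and $J\in\mathscr{I}(\mathbb{R})$. The paper simply asserts $G(\varphi_n)\in\mathcal{M}(\mathbb{R}^{d+1})$ for the dyadic simple functions \eqref{approximation by measurable simple functions}, whose level sets are measurable sets rather than intervals, so your lemma supplies exactly the justification the paper leaves implicit; the $\sigma$-algebra argument for the Borel factor and the covering argument for the null factor are both sound (note that $J$ is bounded in the application, which your null-set inflation uses). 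Two small points to tidy: the structure theorem (Lemma \ref{regu} and its corollary) is stated only for sets in $\mathcal{M}(\mathbb{R}^d)$, so either observe that the level sets you need are contained in $I_n$ and hence have finite measure, or extend the decomposition $E=B\cup N$ to all of $\mathcal{M}^*(\mathbb{R}^d)$ via the truncation $E=\bigcup_k\bigl(E\cap\{|x|\le k\}\bigr)$ as in the paper's Proposition in \S 7; and in the direction $\mathcal{M}^0\subset\mathcal{L}^0$ your two-stage monotone limit (first $k\to\infty$ to reach $g_n$, then $n\to\infty$), discarding a null set at each stage, is a clean alternative to the paper's single sequence with $G(\varphi_n)\uparrow G(f)$ --- the needed fact that altering a set by a null set preserves membership in $\mathcal{M}^*$ follows from Lemma \ref{measurability4empty} together with the $\sigma$-algebra property, so no gap remains.
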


\begin{proof}
If $f \in \mathcal{M}^0(\mathbb{R}^d)$, i.e. 
$G(f) \in \mathcal{M}^*(\mathbb{R}^{d+1})$, 
then by Propositions \ref{approximation by step functions} and 
\ref{approximation by integrable functions} 
there exist step functions $\varphi_n$ with 
$G(\varphi_n) \uparrow G(f)$. 
For any $a \geq 0$ and $n \in \mathbb{N}$, 
$\{\varphi_n>a \} \in \mathcal{M}^*(\mathbb{R}^d)$ and 
since $\varphi_n \uparrow f$ a.e., 
\[ 
\{f>a\}= \bigcup_{n=1}^{\infty}\{\varphi_n>a\}\text{ a.e. } \]
Hence $f \in \mathcal{L}^0(\mathbb{R}^d)$.

Conversely, if $f \in \mathcal{L}^0(\mathbb{R}^d)$, 
then with $\varphi_n$ as in 
\eqref{approximation by measurable simple functions} we have 
$\varphi_n (x) \uparrow f(x)$ a.e. and 
\[
G(f)= \bigcup_{n=1}^{\infty}G(\varphi_n), 
\quad 
G(\varphi_n) 
\in \mathcal{M}(\mathbb{R}^{d+1})\subset \mathcal{M}^*(\mathbb{R}^{d+1})
\]
Therefore $f \in \mathcal{M}^0(\mathbb{R}^d)$. 

\end{proof}

\newpage 

\phantomsection
\addcontentsline{toc}{section}{References}

\end{document}